\definecolor{darkgreen}{rgb}{0,0.5,0}
\def\qed{\hfill$\square$}
\numberwithin{equation}{section}
\newtheorem{thm}[equation]{\sc Theorem}
\newtheorem{lem}[equation]{\sc Lemma}
\newtheorem{cor}[equation]{\sc Corollary}
\newtheorem{prop}[equation]{\sc Proposition}
\newtheoremstyle{notation}{3pt}{3pt}{}{}{\itshape}{:}{.5em}{\thmname{#1}}
\theoremstyle{notation}
\newtheorem{defin}{\it Definition}
\newtheorem{ex}{\it Example}
\renewcommand{\@seccntformat }[1]{\csname the#1\endcsname. }
\chardef\xnearrow='045
\chardef\xnwarrow='055
\chardef\xsearrow='046
\chardef\xswarrow='056
\def\CK{{\cal K}}
\def\vsp{\vspace*{1.5ex}}
\def\ov{\overline}
\def\ind{\mbox{{\rm ind}}}
\def\id{\mbox{{\rm id}}}
\def\mod{\mbox{{\rm mod}}}
\def\Hom{\mbox{\rm Hom}}
\def\End{\mbox{\rm End}}
\def\Aut{\mbox{\rm Aut}}
\def\Ker{\mbox{\rm Ker}}
\def\Coker{\mbox{\rm Coker}}
\def\arcleq{\leq_{\rm arc}}
\def\homleq{\leq_{\rm hom}}
\def\extleq{\leq_{\rm ext}}
\def\degleq{\leq_{\rm deg}}
\def\partleq{\leq_{\rm part}}
\def\k#1{{#1}\otimes k}
\def\vsp{\vspace*{1.5ex}}
\def\vedge#1{{\buildrel{#1} \over {\hbox to
20pt{\hspace{-0.2em}$-$\hspace{-0.2em}$-$\hspace{-0.2em}$-$ }}}}
\newcounter{boxsize}
\newcounter{tempcounter}
\newcommand{\smallentryformat}{\scriptstyle\sf}
\newcommand\smbox{\put(0,0){\line(1,0){\value{boxsize}}}%
  \put(\value{boxsize},0){\line(0,1){\value{boxsize}}}%
  \put(0,0){\line(0,1){\value{boxsize}}}%
  \put(0,\value{boxsize}){\line(1,0){\value{boxsize}}}}
\newcommand\numbox[1]{\put(0,0)\smbox%
  \put(0,0){\makebox(\value{boxsize},\value{boxsize})[c]{%
      $\smallentryformat#1$}}}
\newcommand\singlebox[1]{\raisebox{-.4ex}{\begin{picture}(4,0)\setcounter{boxsize}{3}%
    \put(0,0)\smbox%
    \put(0,0){\makebox(\value{boxsize},\value{boxsize})[c]{%
      $\scriptstyle\sf#1$}}\end{picture}}}
\newcommand\vdotbox{\setcounter{tempcounter}{\value{boxsize}*2}
  \multiput(0,-\value{boxsize})(\value{boxsize},0)2{%
    \line(0,1){\value{tempcounter}}}
  \put(0,\value{boxsize}){\line(1,0){\value{boxsize}}}
  \put(0,-2){\makebox(\value{boxsize},\value{tempcounter})[c]{%
      $\scriptscriptstyle\vdots$}}}
\def\arr#1#2{\arrow <2mm> [0.25,0.75] from #1 to #2}
\def\ssize{\scriptscriptstyle}
\newcommand\boxes[2]{\ifthenelse{#2=3}{$\scriptstyle P_2^{#1}$}{%
                                       $\scriptstyle P_{#2}^{#1}$}}
\begin{document}
\thispagestyle{empty}


\bigskip\bigskip
\begin{center}
{\large\bf Operations on Arc Diagrams and\\[1ex]
Degenerations for Invariant Subspaces of Linear Operators} 
\end{center}

\smallskip

\begin{center}
Justyna Kosakowska and Markus Schmidmeier
\footnote{The first named author is partially supported 
by Research Grant No.\ DEC-2011/02/\allowbreak A/ST1/00216
of the Polish National Science Center}\vspace{1cm}

\centerline{Dedicated to Professor Daniel Simson on the occasion of his 70th birthday}

\bigskip \parbox{10cm}{\footnotesize{\bf Abstract:}
We study geometric properties of varieties associated
with invariant subspaces of nilpotent operators. There are algebraic groups acting on these varieties.
We give dimensions of orbits of these actions.
Moreover, a~combinatorial characterization of the partial order given by degenerations  
is described. }

\medskip \parbox{10cm}{\footnotesize{\bf MSC 2010:} 
Primary: 14L30, 
16G20, 
Secondary: 16G70, 
05C85,  
47A15  
}

\medskip \parbox{10cm}{\footnotesize{\bf Key words:} 
degenerations, partial orders, Hall polynomials, 
nilpotent operators, invariant subspaces, Littlewood-Richardson
tableaux }
\end{center}

\section{Introduction}

Arc diagrams represent the isomorphism types of certain invariant subspaces of
nilpotent linear operators, which in turn correspond to the orbits of the action
of an algebraic group on the representation space.
We show that operations on arc diagrams provide a combinatorial description
for the degeneration order given by this group action.

\subsection{Operations on arc diagrams}

Two diagrams of arcs and poles are said to be in {\it arc order}
if the first is obtained from the second by a sequence of moves of type 

$$
\begin{picture}(60,28)(0,-5)
  \put(0,0){\begin{picture}(20,8)(0,5)
      \put(0,4){\line(1,0){20}}
      \multiput(3,3)(4,0)4{$\bullet$}
      \put(10,4){\oval(4,4)[t]}
      \put(10,4){\oval(12,12)[t]}
    \end{picture}
  }
  \put(20,20){\begin{picture}(20,8)(0,5)
      \put(0,4){\line(1,0){20}}
      \multiput(3,3)(4,0)4{$\bullet$}
      \put(8,4){\oval(8,8)[t]}
      \put(12,4){\oval(8,8)[t]}
    \end{picture}
  }
  \put(25,15){\line(-1,-1){9}}
  \put(23,7){\makebox(0,0){\bf\footnotesize (A)}}
  \put(17,13){\rotatebox{45}{\makebox[0mm]{$<_{\rm arc}$}}}
  \put(35,15){\line(1,-1){9}}
  \put(37,7){\makebox(0,0){\bf\footnotesize(C)}}
  \put(42,12){\rotatebox{-45}{\makebox[0mm]{$>_{\rm arc}$}}}
  \put(40,0){\begin{picture}(20,8)(0,5)
      \put(0,4){\line(1,0){20}}
      \multiput(3,3)(4,0)4{$\bullet$}
      \put(6,4){\oval(4,4)[t]}
      \put(14,4){\oval(4,4)[t]}
    \end{picture}
  }
\end{picture}
\qquad
\begin{picture}(52,28)(0,-5)
  \put(0,0){\begin{picture}(16,8)(0,5)
      \put(0,4){\line(1,0){16}}
      \multiput(3,3)(4,0)3{$\bullet$}
      \put(6,4){\oval(4,4)[t]}
      \put(12,4){\line(0,1){7}}
    \end{picture}
  }
  \put(18,20){\begin{picture}(16,8)(0,5)
      \put(0,4){\line(1,0){16}}
      \multiput(3,3)(4,0)3{$\bullet$}
      \put(8,4){\oval(8,8)[t]}
      \put(8,4){\line(0,1){7}}
    \end{picture}
  }
  \put(36,0){\begin{picture}(16,8)(0,5)
      \put(0,4){\line(1,0){16}}
      \multiput(3,3)(4,0)3{$\bullet$}
      \put(10,4){\oval(4,4)[t]}
      \put(4,4){\line(0,1){7}}
    \end{picture}
  }
  \put(23,15){\line(-1,-1){9}}
  \put(21,7){\makebox(0,0){\bf\footnotesize (B)}}
  \put(15,13){\rotatebox{45}{\makebox[0mm]{$<_{\rm arc}$}}}
  \put(29,15){\line(1,-1){9}}
  \put(31,7){\makebox(0,0){\bf\footnotesize (D)}}
  \put(36,12){\rotatebox{-45}{\makebox[0mm]{$>_{\rm arc}$}}}
\end{picture}
$$

If the arc diagrams $\Delta$ and $\Delta'$ are in relation, we write $\Delta\arcleq \Delta'$.
We ask:
\begin{itemize}
\item[Q1:] For two arc diagrams, decide if they are in arc order.
\item[Q2:] If two arc diagrams are in arc order, determine a 
  sequence of moves which transforms one into the other.
\end{itemize}

Formally, an arc diagram is a finite set of arcs and poles in the 
Poincar\'e half plane.  We assume that all end points are natural numbers 
(arranged from right to left) and permit multiple arcs and poles.

\begin{ex}
The diagrams $\Delta$ and $\Delta'$ are in arc order via a single move of type {\bf (B)}.
$$\raisebox{5ex}[0mm]{$\Delta:$}
  \qquad
  \framebox(26,18)[t]{\begin{picture}(24,12)(0,5)
      \put(0,4){\line(1,0){24}}
      \multiput(3,3)(4,0)5{$\bullet$}
      \put(3,1){$\scriptstyle 5$}
      \put(7,1){$\scriptstyle 4$}
      \put(11,1){$\scriptstyle 3$}
      \put(15,1){$\scriptstyle 2$}
      \put(19,1){$\scriptstyle 1$}
      \put(8,4){\oval(8,8)[t]}
      \put(12,4){\oval(8,8)[t]}
      \put(12,4){\line(0,1){10}}
      \put(20,4){\line(0,1){10}}
  \end{picture}}
  \qquad\qquad\raisebox{5ex}[0mm]{$\Delta':$}
  \qquad
  \framebox(26,18)[t]{\begin{picture}(24,12)(0,5)
      \put(0,4){\line(1,0){24}}
      \multiput(3,3)(4,0)5{$\bullet$}
      \put(3,1){$\scriptstyle 5$}
      \put(7,1){$\scriptstyle 4$}
      \put(11,1){$\scriptstyle 3$}
      \put(15,1){$\scriptstyle 2$} 
      \put(19,1){$\scriptstyle 1$}
      \put(12,4){\oval(16,16)[t]}
      \put(12,4){\oval(8,8)[t]}
      \put(12,4){\line(1,1){10}}
      \put(12,4){\line(-1,1){10}}
  \end{picture}}
  $$
\end{ex}

\subsection{Short exact sequences of linear operators}

Let $k$ be a field.  We call a $k[T]$-module
$N_\alpha=N_\alpha(k)=\bigoplus_{i=1}^s k[T]/(T^{\alpha_i})$, where $\alpha=(\alpha_1\geq\cdots\geq\alpha_s)$
is a partition, a nilpotent linear operator.
A monomorphism between two nilpotent linear operators is an {\it embedding} of
an {\it invariant subspace.} 
Given three partitions $\alpha,\beta,\gamma$, we consider the 
subset $V_{\alpha,\gamma}^\beta(k)$ of $H_\alpha^\beta(k)=\Hom_k(N_\alpha,N_\beta)$
of all embeddings $f:N_\alpha\to N_\beta$ 
of $k[T]$-modules which give rise to a short exact sequence
$$0\longrightarrow N_\alpha\stackrel f\longrightarrow N_\beta
   \longrightarrow N_\gamma\longrightarrow 0.$$
Suppose now that the field $k$ is algebraically closed.
Then the subset $V_{\alpha,\gamma}^\beta(k)\subset H_\alpha^\beta(k)$ 
is constructible and there is an algebraic group acting on it
such that the isomorphism classes
of short exact sequences are in one-to-one correspondence 
with the orbits under this group action.

\medskip 
For points $Y=(N_\alpha,N_\beta,f)$, $Z=(N_\alpha,N_\beta,g)$ in $V_{\alpha,\gamma}^\beta$,
we define $Y\degleq Z$ if $g$ occurs in the closure of the orbit $G_f$ of $f$ under this group 
action.  We ask:

\begin{itemize}
\item[Q3:] Given two embeddings $Y,Z\in V_{\alpha,\gamma}^\beta$, does the relation $Y\degleq Z$ hold?
\item[Q4:] If $Y\degleq Z$, can we find a sequence $Y=Y_0\degleq Y_1\degleq\cdots\degleq Y_s=Z$
  such that the dimensions of two subsequent orbits differ by one?
\end{itemize}

It is the aim of this manuscript to shed light on how Q1 and Q3 are related, 
and to provide an algorithm for Q2 which in turn yields a critereon for Q4.

\subsection{From short exact sequences to arc diagrams}\label{section-from-ses}

In case the partition $\alpha$ has first (or equivalently all) entries at most 2,
the isomorphism types of embeddings in $V_{\alpha,\gamma}^\beta$ are determined combinatorially:

\begin{prop}[\protect{\cite[Proposition~2]{sch}}]
  Let $k$ be any field.
  For partitions $\alpha,\beta,\gamma$ with $\alpha_1\leq 2$, there is a one-to-one correspondence
  $$\big\{\text{embeddings in $V_{\alpha,\gamma}^\beta(k)$}\big\}{\big/}_{\cong} 
  \quad\stackrel{1-1}\longleftrightarrow 
  \quad \big\{\text{Klein tableaux of type $(\alpha,\beta,\gamma)$}\big\}.$$
\end{prop}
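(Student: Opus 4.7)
The plan is to build the correspondence in both directions and verify they are mutually inverse, using the very restrictive fact that every indecomposable summand of $N_\alpha$ has dimension at most $2$.

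\emph{From embeddings to Klein tableaux.} Given $f:N_\alpha\hookrightarrow N_\beta$, first associate the usual Littlewood--Richardson tableau of shape $\beta/\gamma$ and content $\alpha$: filter $N_\alpha$ by the submodules generated by the first $j$ summands and read off, row by row, which cells of $\beta/\gamma$ are created at step $j$. Because $\alpha_1\leq 2$, every entry is $1$ or $2$, and every $2$-entry comes paired with a $1$-entry from the same $k[T]/(T^2)$-summand. For each such summand, choose a generator $x$; then $f(x)\in N_\beta$ is some lift, and $f(Tx)$ lies in the socle $\soc N_\beta$, which has a canonical basis indexed by the cyclic summands of $N_\beta$ (equivalently, by the columns of $\beta$). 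The Klein label attached to the $2$-entry records, combinatorially, in which column the element $f(Tx)$ sits after reducing modulo the smaller-indexed cyclic summands. The first step to verify is that this label does not depend on the choice of the generator $x$ or of the lift of $f(x)$; the only ambiguity is addition of socle elements of \emph{larger} index, which does not disturb the recorded column.

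\emph{From Klein tableaux to embeddings.} Conversely, given a Klein tableau $T$, I would construct a canonical embedding: fix the standard basis $e_{i,j}$ of $N_\beta$ (the $j$-th cyclic summand being $k[T]/(T^{\beta_j})$, with $T^{\beta_j-r}e_{r,j}$ in row $r$), then for each $1$-entry in $T$ at row $i$ of column $j$ send the corresponding generator of a $k[T]/(T)$-summand of $N_\alpha$ to $T^{\beta_j-i}e_{1,j}$, and for each pair $(1,2)$ in rows $i<i'$ of columns $j,j'$ send the generator $x$ of the $k[T]/(T^2)$-summand to $T^{\beta_{j'}-i'}e_{1,j'}$ while adjusting by the Klein label so that $Tx$ maps to the socle element prescribed by that label. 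One then checks that this map is an embedding whose cokernel has type $\gamma$ by counting dimensions in each row, using the Littlewood--Richardson condition.

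\emph{Bijectivity.} Starting from an embedding $f$, constructing its Klein tableau, and then running the reverse construction yields an embedding $f'$. To show $f\cong f'$, I would build an automorphism of $N_\beta$ (and correspondingly of $N_\alpha$) that carries $f$ to $f'$, constructed summand by summand: the choice of generators made in step~1 provides exactly the information needed to define the automorphism on each cyclic summand of $N_\alpha$, and the Klein label pins down the part of $f$ acting on $Tx$ up to socle-preserving automorphism. Conversely, starting from a Klein tableau, the construction plainly recovers it.

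\emph{Main obstacle.} The hard step is the well-definedness of the Klein label in the first direction: one must show the column recording is invariant under all the choices (different generators of the same summand, different orderings of the decomposition of $N_\alpha$, different lifts of $f(x)$), and one must also show that two embeddings giving the same Klein tableau are genuinely isomorphic (not merely equivalent in some weaker sense). Both rely on a careful analysis of the socle filtration of $N_\beta$ and on the fact that $\alpha_1\leq 2$ prevents any more subtle extension data from appearing beyond a single socle-column label. In higher $\alpha_1$ the analogous statement fails, which is precisely why the hypothesis is needed.
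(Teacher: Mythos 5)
The paper does not prove this Proposition itself; it is cited from \cite[Proposition~2]{sch}, and Section~2.3 of the paper recalls the structural ingredients behind it: $\mathcal S_2(k)$ has discrete representation type, the indecomposable objects are exactly the pickets $P_\ell^m$ and bipickets $B_2^{m,r}$, each indecomposable is assigned a one- or two-column Klein tableau, and the tableau of a direct sum is assembled by a merging rule. The bijection then follows from the Krull--Remak--Schmidt property of $\mathcal S_2(k)$. Your proposal instead tries to attach a Klein tableau directly to an arbitrary embedding without first classifying the indecomposables; if completed, that would be a genuinely different route.

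As written, though, it is not a proof. The two steps you flag under ``Main obstacle'' --- that the subscript is independent of all the choices made, and that two embeddings with the same Klein tableau are isomorphic --- constitute essentially the whole content of the proposition; deferring them restates the problem rather than solving it. There are also concrete inaccuracies in the construction you outline. The LR-tableau of $f$ is obtained from the canonical $T$-power filtration, i.e.\ from the isomorphism types of $N_\beta/f(T^jN_\alpha)$, not from ``the submodules generated by the first $j$ summands''; the latter depends on an ordered direct-sum decomposition of $N_\alpha$ and is therefore not an invariant. And the Klein subscript is not ``the column in which $f(Tx)$ sits'': for the bipicket $B_2^{m,r}$ one has $\delta(1)=(T^{m-2},T^{r-1})$, hence $\delta(T)=(T^{m-1},0)$, so $f(Tx)$ lies in the socle of the summand of length $m$, whereas the subscript is $r$ --- it records the length of the summand carrying the \emph{socle component of $f(x)$}, not of $f(Tx)$. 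So the invariant you propose to record is not the one the Klein tableau encodes. Making this approach rigorous would in effect push you back to the normal-form analysis of indecomposables (and the use of Krull--Remak--Schmidt to handle direct sums) on which the cited proof is based.
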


A Klein tableau of type $(\alpha,\beta,\gamma)$ is a skew diagram of shape $\beta\backslash\gamma$
with $\bar\alpha_1$ symbols $\singlebox1$ and $\bar\alpha_2$ entries $\singlebox{2_r}$ 
for suitable subscripts $r$, see Section~\ref{section-pickets}. 
Here $\bar\alpha$ denotes the conjugate of $\alpha$,
so the condition $\alpha_1\leq2$
implies that all entries in the tableau are at most 2.

\smallskip
A Klein tableau $\Pi$ determines  an {\it arc diagram,} as follows.
Suppose $\Pi$ has $e=\beta_1$ rows.  
\begin{itemize} 
\item Arrange the vertices $e, e-1,\ldots, 1$ on a horizontal line.  
\item For each symbol $\singlebox{2_r}$ in row $m$,  draw
  an arc above the line connecting $m$ with $r$.
\item If the number of arcs ending at $r$ is less than the 
  number of symbols $\singlebox{1}$ in row $r$,
  draw for each remaining symbol a vertical line above $r$.
\end{itemize}

\begin{ex}
The Klein tableau $\Pi$ has the arc diagram $\Delta$.

\vspace{-4mm}
$$\Pi:\quad
\begin{picture}(18,12)(0,6)
\multiput(0,12)(3,0)5{\smbox}
\put(15,12){\numbox{1}}
\multiput(0,9)(3,0)4{\smbox}
\put(12,9){\numbox{1}}
\multiput(0,6)(3,0)2{\smbox}
\multiput(6,6)(3,0)2{\numbox{1}}
\put(0,3){\smbox}
\put(3,3){\numbox{2_2}}
\put(0,0){\numbox{2_3}}
\end{picture}
\qquad\qquad
\Delta:\quad
\raisebox{-7mm}{\framebox(26,16)[t]{\begin{picture}(24,12)(0,4)
    \put(0,4){\line(1,0){24}}
    \multiput(3,3)(4,0)5{$\bullet$}
    \put(3,1){\makebox{$\scriptstyle 5$}}
    \put(7,1){\makebox{$\scriptstyle 4$}}
    \put(11,1){\makebox{$\scriptstyle 3$}}
    \put(15,1){\makebox{$\scriptstyle 2$}}
    \put(19,1){\makebox{$\scriptstyle 1$}}
    \put(8,4){\oval(8,8)[t]}
    \put(12,4){\oval(8,8)[t]}
    \put(12,4){\line(0,1){10}}
    \put(20,4){\line(0,1){10}}
  \end{picture}
}}
$$
\end{ex}

\begin{defin}
The {\it arc diagram} $\Delta(Y)$ of an invariant subspace 
$Y\in V_{\alpha,\gamma}^\beta$ is defined to be the
arc diagram given by the Klein tableau 
representing the isomorphism class of $Y$ in $V_{\alpha,\gamma}^\beta$.
We say two invariant subspaces $Y,Z\in V_{\alpha,\gamma}^\beta$ 
are in {\it arc order,} in symbols $Y\arcleq Z$, if $\Delta(Y)\arcleq \Delta(Z)$
holds.
\end{defin}

\subsection{Main results}

We can now relate the above problems Q1 and Q3:

\begin{thm}\label{thm-first-main}
Suppose that $k$ is an algebraically closed field and that $\alpha,\beta,\gamma$
are partitions with $\alpha_1\leq 2$. 
For invariant subspaces $Y,Z\in V_{\alpha,\gamma}^\beta$ we have
$$Y\degleq Z\qquad\text{if and only if}\qquad Y\arcleq Z.$$
\end{thm}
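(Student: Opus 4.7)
The proof splits into the two implications $\arcleq\Rightarrow\degleq$ and $\degleq\Rightarrow\arcleq$. The first can be reduced to the four elementary moves and handled by constructing explicit extensions; the converse is the main obstacle and I expect to go through the hom-order as an intermediary.

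\textbf{Forward direction.} Since $\degleq$ is transitive and $\arcleq$ is by definition generated by the local moves (A)--(D), it suffices to check that each elementary move is itself a degeneration. Via the Klein-tableau correspondence of Section~\ref{section-from-ses}, each move modifies only two indecomposable summands of the embedding: arcs correspond to bipickets $B_2^{m,r}$ and poles to pickets $P_k^m$. For each of (A), (B), (C), (D) I would exhibit an embedding $X$ and a short exact sequence in the category of embeddings
$$0\longrightarrow X\longrightarrow X\oplus Y\longrightarrow Z\longrightarrow 0,$$
and then invoke the Riedtmann/Zwara criterion to obtain $Y\degleq Z$. Such a sequence is strictly local: only the two summands affected by the move participate, while the remaining summands of $Y$ and $Z$ agree and may be carried across as common direct summands of the three terms.

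\textbf{Reverse direction.} My plan proceeds via the hom-order in three steps.
\begin{itemize}
\item[(i)] Upper semicontinuity of $\Hom$ on the representation variety of embeddings gives $\degleq\,\Rightarrow\,\homleq$, where $Y\homleq Z$ means $\dim\Hom(X,Y)\le\dim\Hom(X,Z)$ for every embedding $X$ (equivalently, for every indecomposable $X$).
\item[(ii)] The hypothesis $\alpha_1\le 2$ restricts the indecomposables to the explicit list of pickets $P_k^m$ and bipickets $B_2^{m,r}$, and $\dim\Hom(X,Y)$ for such indecomposable $X$ can be read off combinatorially from $\Delta(Y)$ by counting arcs and poles of appropriate heights and spans.
\item[(iii)] From the resulting system of numerical inequalities I would extract an explicit sequence of inverse moves (A)--(D) transforming $\Delta(Z)$ back to $\Delta(Y)$, proving $Y\arcleq Z$.
\end{itemize}

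\textbf{Main obstacle.} Step~(iii) is the delicate point: the hom-inequalities are easy to state, but producing an actual chain of moves from them requires a concrete procedure, which is precisely the content of Q2 of the introduction. My intention is to design an algorithm that at each stage locates an applicable inverse move by reading off a deficit in a specific hom-count (e.g.\ at a test picket of a distinguished height), and to show that each such move strictly decreases a controlling integral invariant (for instance a weighted sum of arc spans) that is bounded below. Termination then delivers $\Delta(Y)$, and arranging for the invariant to drop by exactly one at each step will simultaneously produce the cover-by-cover chain required by Q4.
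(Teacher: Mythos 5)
Your overall architecture matches the paper's: the forward direction $\arcleq\Rightarrow\degleq$ is handled by showing each elementary move is realized by a short exact sequence (the paper goes via $\extleq\Rightarrow\degleq$; your invocation of the Riedtmann--Zwara criterion is an essentially equivalent packaging), and the reverse direction $\degleq\Rightarrow\arcleq$ is routed through $\homleq$ and then an algorithm extracting arc moves from hom-dimension deficits. This is precisely the chain $\extleq\Rightarrow\degleq\Rightarrow\homleq\Rightarrow\arcleq\Rightarrow\extleq$ that the paper assembles in Theorem~\ref{thm-main}.

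The genuine gap is in your step~(iii), which you correctly flag as the main obstacle but do not actually resolve. Saying ``I would design an algorithm'' is not a proof, and this is where all the substance of the result lies. The paper's resolution (Proposition~\ref{proposition-key-lemma}) is a specific construction: it arranges the function $X\mapsto[X,Z]-[X,Y]$ as a matrix $\delta H(Y,Z)$ indexed by positions in the Auslander--Reiten quiver of $\mathcal S_2^n$, observes that each elementary arc move subtracts the characteristic function of a parallelogram from this matrix, and then shows that whenever $\delta H(Y,Z)$ is nonnegative and nonzero one can always locate such a parallelogram (using Lemmas~\ref{lemma-positive-region} and \ref{lemma-positive-region-ii}) whose corners are witnessed by positive entries in the multiplicity matrix. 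Nothing in your proposal replaces this. Moreover, your suggested termination invariant, a ``weighted sum of arc spans,'' is problematic: under move~\textbf{(A)} the unweighted sum of spans is conserved (replacing arcs $(m,s),(n,r)$ by $(m,r),(n,s)$ preserves $\sum(\text{span})$ since $m+n-r-s$ is fixed), and the sum of squared spans moves in the wrong direction for the reduction $Z\rightsquigarrow Y$. The paper uses the crossing number $x(\Delta)$, and Theorem~\ref{thm-second-main} shows this is exactly the codimension of the orbit, which is why each productive move drops it.

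One further point worth flagging: in step~(i) you appeal to upper semicontinuity of $\dim\Hom$ ``on the representation variety of embeddings.'' But $V_{\alpha,\gamma}^\beta$ is only a constructible subset of $\Hom_k(N_\alpha,N_\beta)$, and the degeneration order there is defined by closure inside $V_{\alpha,\gamma}^\beta$. To apply the standard module-variety facts (Bongartz, Riedtmann) one must first check that this agrees with degeneration in the ambient $\Lambda$-module variety; this is the content of Lemma~\ref{lem-deg-order}, and it should not be taken for granted.
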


In the proof of the ``only if'' part, we will present an algorithm which
delivers a sequence of arc moves that convert $Z$ to $Y$, addressing Q2.

\medskip
The diagram of an invariant subspace determines the dimension of its orbit 
in $V_{\alpha,\gamma}^\beta$ as follows.  
%
%
For a partition $\lambda$, the {\it length} is given by 
$|\lambda|=\lambda_1+\lambda_2+\cdots$, and the {\it moment}
is $n(\lambda)=\sum_{i}\lambda_i(i-1)$. 
\begin{thm}\label{thm-second-main}
Let $k$ be an algebraically closed field, and let $\alpha,\beta,\gamma$ be partitions.
Suppose the arc diagram $\Delta$ of an invariant subspace 
$Y=(N_\alpha,N_\beta,f)\in V_{\alpha,\gamma}^\beta(k)$ has $x(\Delta)$ crossings. Then
$$\dim G_f\;=\; \deg h_{\alpha,\gamma}^\beta + \deg a_\alpha -x(\Delta),$$
where $\deg h_{\alpha,\gamma}^\beta = n(\beta)-n(\alpha)-n(\gamma)$ 
is the degree of the Hall polynomial $h_{\alpha,\gamma}^\beta(q)$ and
$\deg a_\alpha=|\alpha|+2 n(\alpha)$ is the degree of the polynomial 
$a_\alpha(q)$ which counts the automorphisms of $N_\alpha(\mathbb F_q)$. 
\end{thm}

As a consequence we obtain a critereon for Q4:  
Assume $Z$ can be converted to $Y$ via a sequence of arc moves such 
that each move reduces the number of crossings by 1.  Then there exists a sequence
of orbits in $V_{\alpha,\gamma}^\beta$ such that in each step the dimension increases by 1,
and conversely.  The first example in Section~\ref{subsec-lattice} shows that 
in general the answer to Q4 is No (consider the Klein tableaux $\Pi_7$ and $\Pi_5$,
or $\Pi_6$ and $\Pi_4$).

\subsection{Diagrams of oriented arcs}

Note that the arc moves {\bf (A)} and {\bf (B)} share the property that at each vertex,
the number of incoming arcs and the number of outgoing arcs remains constant,
where poles are considered as incoming.  
It turns out that the question whether two arc diagrams are in arc order 
via moves of type {\bf (A)} or {\bf (B)} has an interpretation in terms of 
linear operators.

\medskip
Recall the Theorem by Green and Klein, which we present in the version
for linear operators.

\begin{thm}[\protect{\cite{gk}}]
Let $k$ be any field, and let $\alpha,\beta,\gamma$ be partitions.  
There exists a short exact sequence of linear operators
$$0\longrightarrow N_\alpha\longrightarrow N_\beta\longrightarrow N_\gamma\longrightarrow 0$$
if and only if there exists a Littlewood-Richardson (LR-)tableau of type $(\alpha,\beta,\gamma)$.
\end{thm}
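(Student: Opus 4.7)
The plan is to prove both directions by analyzing the $T$-adic filtration. Recall that for a $k[T]$-module $M$ of type $\lambda$, the conjugate partition satisfies $\lambda'_i = \dim_k T^{i-1}M / T^i M$, so the partitions $\alpha, \beta, \gamma$ are encoded by the dimensions of the graded pieces of the $T$-adic filtrations.

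For the "only if" direction, suppose $f : N_\alpha \to N_\beta$ is an embedding with cokernel $N_\gamma$. For each pair $(i,j)$, I would track the subspace $f(T^{i-1}N_\alpha) \cap T^{j-1} N_\beta$, and place the entry $i$ into the appropriate box of row $j$ of the skew diagram $\beta \setminus \gamma$ whenever the difference
$$\dim\bigl(f(T^{i-1}N_\alpha)\cap T^{j-1}N_\beta\bigr) - \dim\bigl(f(T^{i-1}N_\alpha)\cap T^j N_\beta\bigr)$$
exceeds the analogous quantity computed with $T^i N_\alpha$ in place of $T^{i-1} N_\alpha$. The resulting filling has content $\alpha$. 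The semistandard conditions follow from the observation that $T$ carries $f(T^{i-1}N_\alpha) \cap T^{j-1}N_\beta$ into $f(T^i N_\alpha) \cap T^j N_\beta$, and the Yamanouchi (lattice word) condition encodes the monotonicity $T^{i-1}N_\alpha \supseteq T^i N_\alpha$ propagated through these intersections.

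For the "if" direction, given an LR-tableau $T$ of type $(\alpha, \beta, \gamma)$, I would construct $f$ inductively. Reading the entries of $T$ in a standard order (for example, row-by-row from top to bottom, right to left within each row), I build a chain of partial embeddings, adding at each step a new generator of a cyclic summand of $N_\alpha$ whose image has $T$-adic valuation dictated by the row of the current box. The semistandard and lattice-word conditions are precisely what is needed to guarantee that at every step a "free" direction remains available in the appropriate graded piece of $N_\beta$ to accept the new image.

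The main obstacle is the "if" direction: verifying that the inductive construction does not get stuck amounts to a dimension count in each $T^{j-1}N_\beta / T^j N_\beta$, and one must check that the LR conditions translate exactly into the positivity of the relevant codimensions. An alternative, shorter route is to invoke Hall's theorem: the Hall polynomial $g^\beta_{\alpha\gamma}(q)$ has leading coefficient $c^\beta_{\alpha\gamma}$, so the nonvanishing of the LR coefficient---equivalently, the existence of an LR-tableau---forces $g^\beta_{\alpha\gamma}(q)\neq 0$ for every prime power $q$, and a standard specialization argument then yields an extension over every field $k$.
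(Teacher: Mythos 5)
The paper does not prove this statement: it cites it directly to Klein's 1968 paper \cite{gk}, so there is no in-paper argument for you to match. That said, your proposal has a genuine gap and I will focus on that.

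Your ``only if'' direction is roughly the standard construction (this is essentially Macdonald, \emph{Symmetric Functions and Hall Polynomials}, II.(3.4): one sets $\gamma^{(i)}$ to be the cotype of $f(T^i N_\alpha)$ inside $N_\beta$ and fills the skew shape $\gamma^{(i)}/\gamma^{(i-1)}$ with entries $i$). Your intersection bookkeeping produces the same data, but you would still need to \emph{verify} that the differences you write down are nonnegative, that each $\gamma^{(i)}/\gamma^{(i-1)}$ is a horizontal strip (this gives semistandardness), and that the lattice-word property holds. You assert these follow from monotonicity of the $T$-adic filtration; they do, but none of it is automatic and the lattice-word check in particular requires an argument.

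The real problem is the ``if'' direction, where you explicitly acknowledge a gap and then do not close it. Saying ``the LR conditions translate exactly into the positivity of the relevant codimensions'' is asserting the theorem, not proving it: that translation \emph{is} the content of Klein's result. Your fallback via Hall polynomials does not rescue the argument for two reasons. First, the statement that $g^\beta_{\alpha\gamma}(q)$ is a polynomial with leading coefficient the Littlewood--Richardson number is \emph{itself} proved (in Klein \cite{klein} and in Macdonald II.(4.3)) by first establishing the present theorem or an equivalent constructive statement, so invoking it here is circular unless you have an independent proof in mind. Second, even granting it, nonvanishing of $g^\beta_{\alpha\gamma}(q)$ only produces a short exact sequence over each finite field $\mathbb{F}_q$; ``a standard specialization argument'' for arbitrary (in particular infinite, non--algebraically-closed) $k$ is not something you can wave at --- you would need either an explicit $\mathbb{Z}$-form of the embedding (which is exactly what the direct inductive construction provides and is close in spirit to Lemma~\ref{lemma-defined-over-Z} of this paper) or a genuine model-theoretic transfer, and the latter only gets you algebraically closed fields cheaply. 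In short: the proposal identifies the hard step but leaves it open, and the offered shortcut is both potentially circular and insufficient to cover all fields.
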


We will consider LR-tableaux in Section~\ref{section-lr-tableau}; for the purpose
of this paragraph, an LR-tableau $\Gamma$ is just a Klein tableau $\Pi$ 
with all subscripts omitted.
We call $\Gamma$ the {\it underlying} LR-tableau of $\Pi$, 
and $\Pi$ a {\it refinement} of $\Gamma$.
Suppose an arc diagram $\Delta$ is given by a Klein tableaux $\Pi$.
Observe that exactly the arc moves of type {\bf (A)} and {\bf (B)}
are given by changing subscripts in the Klein tableau, and hence
leave the underlying LR-tableau unchanged.

\medskip
Suppose $\Gamma$ is an LR-tableau of type $(\alpha,\beta,\gamma)$.
Denote by $V_\Gamma$ the constructible subset
of $H_\alpha^\beta$ (contained in 
$V_{\alpha,\gamma}^\beta$) of all invariant subspaces 
$Y$ for which the Klein tableau representing the isomorphism type of $Y$ 
is a refinement of $\Gamma$. 

\begin{thm}\label{thm-third-main}
Suppose $k$ is an algebraically closed field, and $\Gamma$ is an LR-tableau
with entries at most $2$.
For invariant subspaces $Y,Z\in V_\Gamma$ we have
$Y\degleq Z$ if and only if $Y\arcleq Z$.
If one of the relations holds, then $Y$ is obtained from $Z$ by a sequence of
arc moves of type {\bf (A)} and {\bf (B)}.
\end{thm}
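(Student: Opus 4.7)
The plan rests on two ingredients already in hand. Theorem~\ref{thm-first-main} supplies the equivalence $Y\degleq Z \Leftrightarrow Y\arcleq Z$ in the ambient variety $V_{\alpha,\gamma}^\beta$; and the paragraph preceding the statement observes that moves {\bf (A)} and {\bf (B)} are exactly the moves corresponding to reassigning subscripts on the $\singlebox{2_r}$ entries of a Klein tableau, hence preserve the underlying LR-tableau, while moves {\bf (C)} and {\bf (D)} alter it. Consequently $V_\Gamma$ is closed under moves {\bf (A)} and {\bf (B)} but not under {\bf (C)} or {\bf (D)}, and any two Klein tableaux refining $\Gamma$ differ from one another only in the choice of subscripts on their $2$-entries.

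The easy direction is immediate: if $Y\arcleq Z$ via any mix of moves, then Theorem~\ref{thm-first-main} gives $Y\degleq Z$ in $V_{\alpha,\gamma}^\beta$, and since the degeneration order is defined by orbit closures it restricts to $V_\Gamma$. For the converse and the restriction to moves {\bf (A)}, {\bf (B)}, suppose $Y,Z\in V_\Gamma$ and $Y\degleq Z$. Theorem~\ref{thm-first-main} yields $Y\arcleq Z$ in the ambient variety, so some sequence of arc moves converts $\Delta(Z)$ to $\Delta(Y)$. I would show that such a sequence may be chosen inside $V_\Gamma$ by invoking the constructive algorithm produced in the proof of Theorem~\ref{thm-first-main} for Q2. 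Since the Klein tableaux $\Pi_Z$ and $\Pi_Y$ share shape and unsubscripted content (both refine $\Gamma$), every discrepancy between them lies in the subscripts on the $\singlebox{2_r}$ entries, and every elementary step of the algorithm that resolves such a discrepancy is, by the characterization above, a move of type {\bf (A)} or {\bf (B)}.

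The main obstacle is certifying that the algorithm never needs to perform a move of type {\bf (C)} or {\bf (D)}, even transiently. To exclude this, I would attach to each arc diagram $\Delta$ refining $\Gamma$ a non-negative discrepancy measure relative to $\Delta(Y)$---for instance, a lexicographically weighted count of positions at which the subscripts of the associated Klein tableaux disagree---and verify two facts: (i) the measure vanishes exactly when $\Delta=\Delta(Y)$, and (ii) whenever $\Delta\ne\Delta(Y)$ the algorithm selects an elementary move of type {\bf (A)} or {\bf (B)} that strictly decreases the measure while keeping the diagram inside $V_\Gamma$. Termination of this refined algorithm then produces the required sequence from $\Delta(Z)$ to $\Delta(Y)$, simultaneously establishing the equivalence on $V_\Gamma$ and the refinement of Q2 announced in the statement.
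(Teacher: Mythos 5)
Your strategy correctly isolates the crux — showing that no moves of type {\bf (C)} or {\bf (D)} are needed — but the mechanism you propose for resolving it has a gap, and the paper closes it with a different and much cleaner observation.

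You propose to attach a "discrepancy measure" to arc diagrams refining $\Gamma$ and to verify that the crossing-reduction algorithm of Proposition~\ref{proposition-key-lemma} strictly decreases this measure using only {\bf (A)}/{\bf (B)} moves. Two problems: first, the measure is never pinned down, and the suggested candidate (a subscript-disagreement count relative to $\Delta(Y)$) is not shown to be reduced by the algorithm's parallelogram choices, which are driven by the Hom-matrix, not by subscript agreement; second, and more fundamentally, the claim that "every elementary step of the algorithm that resolves such a discrepancy is a move of type {\bf (A)} or {\bf (B)}" does not follow from the observation that $\Pi_Y$ and $\Pi_Z$ differ only in subscripts — the algorithm has no built-in reason to stay within $V_\Gamma$, and you would need a genuinely new argument to constrain it.

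The paper sidesteps this by appealing to Lemma~\ref{lemma-part-ordering}: arc moves of type {\bf (A)} or {\bf (B)} preserve the underlying LR-tableau, while moves of type {\bf (C)} or {\bf (D)} strictly change it in the $\partleq$ order on LR-tableaux (when you go from the larger diagram $\Delta'$ to the smaller diagram $\Delta$, the LR-type strictly increases). Consequently, in \emph{any} chain of arc moves $\Delta(Z)>_{\rm arc}\cdots>_{\rm arc}\Delta(Y)$, the sequence of underlying LR-tableaux is non-decreasing in $\partleq$, with strict increase exactly where a {\bf (C)} or {\bf (D)} move occurs. Since the endpoints both have LR-type $\Gamma$, no such move can occur. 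This is a purely a-posteriori argument about arbitrary chains and requires no modification or guidance of the algorithm. Your proposal would become rigorous if you replaced your unspecified "discrepancy measure" with this $\partleq$-monotonicity of the LR-type, which is exactly the content of Lemma~\ref{lemma-part-ordering}.
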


\subsection{History and related results}

On the sets of orbits in $V_{\alpha,\gamma}^\beta(k)$ we
consider the partial order $\leq_{\rm deg}$ given by degenerations and the 
partial orders $\leq_{\rm ext}$, $\leq_{\rm hom}$ (see Section \ref{sec-partial-orders}).
It is well known that 
$$\leq_{\rm ext}  \quad\Longrightarrow\quad
\leq_{\rm deg}\quad \Longrightarrow\quad \leq_{\rm hom} $$ 
(see \cite{bongartz}, \cite{riedtmann}). 
But the implication $\leq_{\rm hom}\, \Longrightarrow\,
\leq_{\rm ext} $ is not always true. There is an~open problem to
find classes of algebras or modules for which the last implication
holds. This is the case for representations of Dynkin and extended
Dynkin quivers (\cite{bongartz}, \cite{bongartz1}, \cite{zwara}),
 for representation directed algebras (\cite{bongartz}),
 for 
posets of finite prinjective type (\cite{kos03}).
For more comprehensive information about degenerations of modules we refer the
reader  to \cite{bongartz}, \cite{bongartz1},
\cite{riedtmann}.

\medskip
The problem of classifying invariant subspaces, up to isomorphism, 
is studied since Birkhoff \cite{birkhoff}, where he challenges us to
classify all embeddings of a subgroup in an
abelian group, up to automorphisms of the ambient group.
There are many variations of the original problem, as one can take for the coefficient ring
any local uniserial ring, and as one can admit several submodules. 
For a detailed investigation of the representation theoretic complexity of many categories
of poset representations with coefficients in a local uniserial ring 
we refer the reader to \cite{simson}.

\smallskip
We are here interested in the category $\mathcal S_2(k)$ of all embeddings
$f:N_\alpha\to N_\beta$ where $\alpha,\beta$ are partitions such that $\alpha_1\leq 2$.  
This is a full subcategory, closed under subobjects, of the category
of representations of the one point poset with coefficients in the local uniserial ring $k[[T]]$.
The category $\mathcal S_2(k)$ is of discrete representation type, but not representation directed.

\subsection{Organization of this paper}
In Section \ref{sec-definitions} 
we give definitions and notation concerning LR-tableaux, Klein tableaux,
arc diagrams and the category $\mathcal{S}_2(k)$.
In the proof of Lemma~\ref{lemma-crossings} we show how a formula by T.~Klein in~\cite{klein}
computes the number $x(\Delta)$ of crossings in an arc diagram $\Delta$
as the deviation $x(\Pi)$ from dominance of the underlying Klein tableau.

\smallskip
The partial orders $\leq_{\rm arc}$, $\leq_{\rm ext}$,
  $\leq_{\rm deg}$ and $\leq_{\rm hom}$ are introduced in Section \ref{sec-partial-orders};
their relation is discussed in Theorem~\ref{thm-main} which also deals with the
case where the base field is not algebraically closed.
We show the following implications 
$$
    \leq_{\rm ext} \quad\Longrightarrow \quad
    \leq_{\rm deg} \quad\Longrightarrow \quad\leq_{\rm hom}. $$  

\smallskip
In Section \ref{chapter-hom-tab} we complete the proofs 
of Theorems~\ref{thm-first-main} and \ref{thm-third-main}
by showing the implications
   $$ \leq_{\rm hom} \quad\Longrightarrow \quad\leq_{\rm arc}
      \quad\Longrightarrow\quad\extleq.$$
For the first, we present an algorithm which determines for given
objects $Y,Z\in\mathcal S_2$ satisfying $Y\homleq Z$
a sequence of arc operations that transform the arc diagram for $Z$ into
the arc diagram for $Y$.

\smallskip
Let $\Pi$ be a Klein tableau, $q$ a prime power and $k$ the algebraic closure
of $\mathbb F_q$.  Denote by $g_\Pi(q)$ the number 
of monomorphisms in $V_{\alpha,\gamma}^\beta(\mathbb F_q)$ corresponding to $\Pi$,
and let $Y=(N_\alpha,N_\beta,f)$ be a monomorphism in $V_{\alpha,\gamma}^\beta(k)$,
also corresponding to $\Pi$.
In Section~\ref{sec-dim-hall} we use a result by Lang and Weil \cite{lw}
which links the dimension of the variety $G_f(k)$ to the degree of the polynomial
$g_\Pi(q)$, 
and hence to the degree of the Hall polynomial $h_{\alpha,\gamma}^\beta(q)$, the cardinality
$a_\alpha(q)$ of the automorphism group of $N_\alpha(\mathbb F_q)$ and the 
number of crossings $x(\Delta)$ of the arc diagram given by $\Pi$. 

In order to apply this result, we verify in Lemma~\ref{lemma-defined-over-Z} that
the category $\mathcal S_2$ can be defined over $\mathbb Z$.
This finishes the proof of Theorem~\ref{thm-second-main}.
Finally, in Theorem~\ref{thm-lattice} we describe the minimal and the maximal
elements in the partially ordered set of all arc diagrams corresponding to a
given partition type.

\section{Notation and definitions}\label{sec-definitions}

We introduce Klein tableaux, LR-tableaux and arc diagrams as they provide 
isomorphism invariants for the objects in the category $\mathcal S_2$.

\subsection{From tableaux to arc diagrams}\label{section-lr-tableau}

For a partition $\alpha$ we denote by  $\bar\alpha$ the conjugate, 
so  $\alpha_i$ is the length of the $i$-th column and $\bar\alpha_j$ the length of the 
$j$-th row of the corresponding Young diagram.  
The sum of the entries of $\alpha$ is denoted by $|\alpha|$. 

\begin{defin}
  \begin{enumerate}
  \item 
    Given three partitions $\alpha,\beta,\gamma$, 
    an {\it LR-tableau} of type $(\alpha,\beta,\gamma)$ is a skew diagram 
    of shape $\beta\backslash \gamma$ 
    with $\bar\alpha_1$ entries $\singlebox 1$,
    $\bar\alpha_2$ entries $\singlebox 2$, etc.
    The entries are weakly increasing in each row, strictly
    increasing in each column, and satisfy the lattice permutation property 
    (for each $c\geq 0$, $\ell\geq 2$ there are at least 
    as many entries $\ell-1$ on the 
    right hand side of the $c$-th column as there are entries $\ell$).
  \item
    A {\it Klein tableau} 
    of type $(\alpha,\beta,\gamma)$
    is an LR-tableau of the same type
      where in addition each entry $\ell\geq 2$ carries a subscript,
        subject to the conditions (see \cite[(1.2)]{klein}):
        \begin{enumerate}
        \item If a symbol $\singlebox {\ell_r}$ occurs in the $m$-th row in the tableau, 
          then $1\leq r\leq m-1$.
        \item If $\singlebox{\ell_r}$ occurs in the $m$-th row
          and the entry above $\singlebox{\ell_r}$ is $\ell-1$, 
          then $r=m-1$.
        \item The total number of symbols $\singlebox{\ell_r}$ in the tableau 
          cannot exceed the number of entries $\ell-1$ in row $r$. 
        \end{enumerate}
      \item
        Given a Klein tableau $\Pi$, 
        we obtain the {\it underlying} LR-tableau $\Gamma$ by omitting the
        subscripts of those entries.  We say that $\Pi$ is a {\it refinement}
        of $\Gamma$. 
      \item 
        From an LR-tableau $\Gamma$ one can obtain a Klein tableau $\Pi$ by
        working through the entries in $\Gamma$ row by row 
        (starting at the top) and assigning
        to each symbol $\ell\geq2$ the largest available subscript 
        (due to the lattice permutation property, 
        there is always a subscript available).
        Then $\Pi$ is the {\it dominant} Klein tableau refining $\Gamma$. 
  \end{enumerate}
\end{defin}

We have seen in Section~\ref{section-from-ses} how a Klein tableau determines an
arc diagram.
The following result follows immediately.

\begin{lem}
  Suppose the Klein tableau $\Pi$ is a refinement of the LR-tableau $\Gamma$
  and has arc diagram $\Delta$.
  \begin{enumerate}
  \item At a vertex $m$ in $\Delta$, 
    the number of incoming arcs plus the number of poles can be read off
    as the number of entries $\singlebox1$ in row $m$ in $\Gamma$ or in $\Pi$.
  \item For each vertex $m$ in $\Delta$, the number of outgoing arcs
    equals the number of\/ $\singlebox 2$'s in row $m$ in $\Gamma$.
  \item Each box\/ $\singlebox{2_r}$ in row $m$ in $\Pi$
    corresponds to an arc from $m$ to $r$ in $\Delta$, and conversely.
  \item A Klein tableau of given type $(\alpha,\beta,\gamma)$ is determined uniquely
    by its arc diagram.
  \item The arc diagram associated with a dominant Klein tableau has no intersections. \qed
  \end{enumerate}
\end{lem}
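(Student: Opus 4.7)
Parts~(1)--(3) should be read off directly from the construction recalled in Section~\ref{section-from-ses}. Each symbol $\singlebox{2_r}$ in row $m$ produces exactly one arc, from $m$ to $r$, which yields (2) and (3); and poles are drawn at $r$ precisely so that the total of incoming arcs and poles there equals the number of $\singlebox 1$'s in row $r$, which yields (1). The only point to check is that the number of incoming arcs at $r$ does not already exceed this quantity, and that is exactly Klein's condition~(c) applied to $\ell=2$. Because the subscripts decorate only the entries $\ell\geq 2$, the relevant counts in each row agree in $\Pi$ and in the underlying $\Gamma$.

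For~(4), I would reconstruct $\Pi$ from $\Delta$ and the type $(\alpha,\beta,\gamma)$ as follows: the shape $\beta\backslash\gamma$ is fixed by the type, (1) reads off the number of $\singlebox 1$'s in each row from $\Delta$, (2) reads off the number of $\singlebox 2$'s, the weakly increasing condition along rows places the $\singlebox 1$'s to the left of the $\singlebox 2$'s, and (3) identifies the multiset of subscripts in each row. Within a row, any $\singlebox 2$ whose box one row above lies inside the skew shape has a $\singlebox 1$ directly above it (by the strict column condition) and so has subscript $m-1$ by Klein's~(b); the $\singlebox 2$'s in the remaining columns have no skew box above them, so (b) is vacuous there and the distribution of the residual subscripts among those (indistinguishable) columns is determined by the multiset alone.

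Part~(5) is where the real content lies, and I would argue by contradiction, exploiting the greedy nature of the dominance algorithm. Suppose the dominant refinement $\Pi$ yields two crossing arcs $(m_1,r_1)$ and $(m_2,r_2)$ with $m_1<m_2$; the ordering $e,e-1,\ldots,1$ of the vertices and the geometric meaning of a crossing force $r_1<r_2<m_1<m_2$. In particular $r_1<m_1-1$, so when the algorithm processes the symbol in row $m_1$ carrying subscript $r_1$, condition~(b) does not force the choice, and $r_1$ must be the largest subscript still available at that moment. But $r_2>r_1$ is a legal value ($r_2<m_1$), so it must already have been unavailable, which by condition~(c) means that the quota of subscript-$r_2$ tokens---the number of $\singlebox 1$'s in row $r_2$---was already filled. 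Since this quota cannot decrease as the algorithm proceeds, no $\singlebox{2_{r_2}}$ may be issued in row $m_2$, contradicting the arc $(m_2,r_2)$. The delicate point, on which I would spend the most care, is confirming that the ``largest available'' rule and the forced assignments imposed by (b) cooperate throughout the algorithm, so that the exhaustion argument truly propagates undisturbed from row $m_1$ all the way to row $m_2$.
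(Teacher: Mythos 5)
The paper records this lemma with a bare \qed, treating all five parts as immediate from the construction of the arc diagram in Section~\ref{section-from-ses}; so there is no proof in the paper to compare against, and your careful spelling-out is a genuine (and welcome) expansion. Parts (1)--(4) are fine as you have them, and the normalization issue you gesture at in (4)---that permuting the subscripts among the ``free'' columns gives the same Klein tableau---is exactly the right thing to observe.

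For part~(5) there is one real gap: the word ``intersections'' in the statement covers \emph{both} arc--arc crossings and arc--pole crossings (this is made explicit in Lemma~\ref{lemma-crossings}, where $x(\Delta)$ is split into exactly these two contributions), but your contradiction argument only treats two crossing \emph{arcs}. You should also rule out an arc $(m,r)$ passing over a pole at a vertex $p$ with $r<p<m$. The same exhaustion argument closes this case: a pole at $p$ means the quota of subscript-$p$ tokens is not fully used at the end of the algorithm, hence at no earlier moment either; so when the greedy pass reaches the symbol in row $m$ that received subscript $r$, the larger value $p\le m-1$ was available (and condition~(b) is again vacuous since $r<m-1$), contradicting the ``largest available'' rule. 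As for the caveat you flag at the end, it is in fact harmless: the running count of $\singlebox{2_{r_2}}$'s is nondecreasing and never decremented, and condition~(b) can only force the subscript $m'-1$ in row $m'$, which is $\ge m_1>r_2$ for every row between $m_1$ and $m_2$; so forced assignments cannot free up the subscript $r_2$, and the exhaustion propagates as you hoped. With the arc--pole case added, the argument is complete.
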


Suppose that $\Pi$ and $\Pi'$ are two Klein tableaux
of partition type $(\alpha,\beta,\gamma)$ 
and that they are represented by arc diagrams $\Delta$ and $\Delta'$, respectively.
We define $\Pi \arcleq \Pi'$ if $\Delta \arcleq \Delta'$.
It follows from the lemma that the dominant Klein tableaux are minimal with respect
to this relation.

\subsection{A formula in Klein's paper}

With a given LR-tableau $\Gamma$, we have associated the dominant Klein tableau $\Pi_0$;
it is special among all Klein tableaux refining
$\Gamma$ in the sense that the associated arc diagram has no intersections. 
The ``distance'' of an arbitrary Klein tableau $\Pi$ refining $\Gamma$ 
from the dominant one is introduced in \cite[Definition~1.4]{klein} as 
the {\it deviation $x(\Pi)$ from dominance} for which the following formula is given.

$$\begin{array}{rcl}x(\Pi) & = & 
  \displaystyle\sum_j \big(\bar\zeta_j - \bar\gamma_j + |\beta^{j-1}| - |\beta^j|\big)
               \sum_{k>j}\big(\bar\beta_k^{j-1} - \bar\zeta_k\big) \\[1ex]
        &  +& \displaystyle\sum_{j}\sum_{\ell>j}\big(\bar\beta^j_{\ell+1}-\bar\beta^{j-1}_{\ell+1}\big)
               \sum_{k=j+1}^\ell\big(\bar\beta_k^{j-1}-\bar\zeta_k\big).\end{array}$$

Here, a Klein tableau $\Pi$ of type $(\alpha,\beta,\gamma)$ 
with entries at most 2 is encoded by a sequence of partitions 
$$\Pi \;=\; [\gamma;\,\zeta=\beta^0;\,\beta^1,\ldots,\beta^s=\beta]$$
where $\gamma$ represents the region in the diagram $\Pi$ given by the empty boxes;
$\zeta=\beta^0$ is the partition for
the region marked off by the entries $\singlebox{}$ and $\singlebox1$;
and for each $j$, the partition $\beta^j$ represents region in $\Pi$
given by the empty boxes,
the boxes $\singlebox{1}$ and the boxes $\singlebox{2_i}$ where $i\leq j$.

\medskip
We can read off the deviation from dominance as the number of intersections
in the corresponding arc diagram:

\begin{lem}\label{lemma-crossings}
Let $\Pi$ be a Klein tableau with entries at most $2$.
The deviation $x(\Pi)$ from dominance equals the number $x(\Delta)$
of crossings in the arc diagram $\Delta$ which corresponds to $\Pi$.
\end{lem}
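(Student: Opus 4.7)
The plan is to interpret each factor in Klein's formula combinatorially, matching the first double sum to crossings between poles and arcs and the second to crossings between pairs of arcs, so that the total recovers $x(\Delta)$.

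First I would translate the partition-based quantities into counts of boxes in $\Pi$ (equivalently, of arcs and poles in $\Delta$). From the encoding $\Pi=[\gamma;\zeta=\beta^0;\beta^1,\ldots,\beta^s=\beta]$ one reads directly that $\bar\zeta_j-\bar\gamma_j$ is the number of $\singlebox{1}$'s in row $j$, and $\bar\beta_k^j-\bar\beta_k^{j-1}$ is the multiplicity of $\singlebox{2_j}$'s in row $k$. Since a $\singlebox{2_j}$ in row $k$ encodes an arc $(k,j)$, summing these differences gives that $|\beta^j|-|\beta^{j-1}|$ equals the total number of arcs ending at vertex $j$, and $\bar\beta_k^{j-1}-\bar\zeta_k=\sum_{i<j}(\bar\beta_k^i-\bar\beta_k^{i-1})$ equals the number of arcs from $k$ to some $i<j$.

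Next I would read off the first summand. The coefficient $\bar\zeta_j-\bar\gamma_j+|\beta^{j-1}|-|\beta^j|$ equals (number of $\singlebox{1}$'s in row $j$) minus (total number of $\singlebox{2_j}$'s in $\Pi$), which by the construction recalled in Section~\ref{section-from-ses} is precisely the number of poles planted at vertex $j$. The partner factor $\sum_{k>j}(\bar\beta_k^{j-1}-\bar\zeta_k)$ counts arcs $(k,i)$ with $i<j<k$, that is, arcs passing over vertex $j$. Their product gives the number of crossings at $j$ between poles and arcs, and summing over $j$ yields the arc-pole contribution to $x(\Delta)$. For the second summand, the factor $\bar\beta_{\ell+1}^j-\bar\beta_{\ell+1}^{j-1}$ counts arcs from $\ell+1$ to $j$, while $\sum_{k=j+1}^{\ell}(\bar\beta_k^{j-1}-\bar\zeta_k)$ counts arcs $(k,i)$ with $j<k\leq\ell$ and $i<j$; any such pair has interlaced endpoints $i<j<k<\ell+1$, and therefore crosses. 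Summing over $j$ and $\ell$ gives the arc-arc contribution to $x(\Delta)$, and adding the two interpretations yields $x(\Pi)=x(\Delta)$.

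The main obstacle I anticipate is the bookkeeping in the arc-arc case: I must check that this parametrization of crossings by the index data $(j,\ell,k,i)$ is a bijection onto the set of crossing pairs. This reduces to the observation that two crossing arcs have four pairwise distinct endpoints (parallel arcs with identical endpoints never cross one another, and arcs sharing an endpoint never cross), so that the crossing pair is uniquely recovered by declaring the arc with the larger upper endpoint to be $(\ell+1,j)$ and the other to be $(k,i)$, with no crossing counted twice or omitted.
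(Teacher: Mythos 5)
Your proof is correct and follows essentially the same route as the paper's own argument: you translate each factor in Klein's formula into counts of boxes in $\Pi$ (equivalently, arcs and poles in $\Delta$), identify the first double sum with arc-pole crossings counted pole by pole, and identify the second double sum with arc-arc crossings parametrized by the arc with the larger left endpoint. The closing remark about the parametrization being a bijection (using that crossing arcs have four distinct endpoints) is a useful extra sanity check that the paper leaves implicit, but does not constitute a different method.
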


\begin{proof}
For each of the factors in the formula for $x(\Pi)$ we give an interpretation
in terms of data associated with the arc diagram $\Delta$:

\smallskip
In $\Pi$, the boxes $\singlebox1$ are located in the skew diagram $\zeta\backslash\gamma$,
so the number of boxes $\singlebox1$ in row $j$ is given by $\bar\zeta_j - \bar\gamma_j$.
Similarly, the boxes $\singlebox{2_j}$ are located  in the skew diagram 
$\beta^j\backslash\beta^{j-1}$, their number is $|\beta^{j}| - |\beta^{j-1}|$.
In $\Delta$, each box $\singlebox{2_j}$ corresponds to an arc ending at $j$,
so the difference $\bar\zeta_j - \bar\gamma_j + |\beta^{j-1}| - |\beta^j|$
counts the number of poles at position~$j$. 

\smallskip
As for the next factor, boxes of type $\singlebox{2_i}$ where $i<j$ are located 
in the skew diagram $\beta^{j-1}\backslash\zeta$; the number of such boxes in row $k$
is $\bar\beta_k^{j-1} - \bar\zeta_k$. The sum $\sum_{k>j}(\bar\beta_k^{j-1} - \bar\zeta_k)$ 
taken over all $k>j$ 
counts the number of arcs in $\Delta$ starting on the left of $j$ and ending on the right.
Thus the $j$-th term in the first sum counts the number of crossings with poles at 
position~$j$. 

\smallskip
Similarly, the skew diagram $\beta^j\backslash\beta^{j-1}$ consists exactly of the boxes
$\singlebox{2_j}$, so the factor $\bar\beta^j_{\ell+1}-\bar\beta^{j-1}_{\ell+1}$, which is the number
of such boxes in the $(\ell+1)$-st row, counts 
the arcs from $\ell+1$ to $j$.  

\smallskip We have already seen that $\beta^{j-1}\backslash\zeta$ is the skew diagram which
consists of the boxes $\singlebox{2_i}$ where $i<j$.  
Hence the sum $\sum_{k=j+1}^\ell(\bar\beta_k^{j-1}-\bar\zeta_k)$
counts the arcs which start in the interval $[j+1,\ell]$ and end
in $[1,j-1]$.  Those are the arcs which intersect with the arc from $\ell+1$ to $j$,
and are on the right hand side of it.
In conclusion, the term corresponding to $j$ and $\ell$ in the second sum
counts the number of intersections of arcs from $\ell+1$ to $j$ with arcs which are on
the right hand side of it.

\smallskip
We have seen that the first sum counts the intersections between arcs and poles in $\Delta$,
while the second sum counts the intersections of arcs with arcs.
\end{proof}

\subsection{A partial ordering on LR-tableaux}

\smallskip
We recall that there is a {\it partial ordering on partitions} given by
$\zeta\partleq\zeta'$ if for each natural number $\ell$, 
the inequality $\sum_{i=1}^\ell \zeta_i\leq \sum_{i=1}^\ell\zeta'_i$ holds.

\begin{defin}
Suppose $\Gamma$, $\Gamma'$ are LR-tableaux of partition type $(\alpha,\beta,\gamma)$,
both with entries at most 2. Representing LR-tableaux by increasing sequences of
partitions as in \cite{macd} or in the proof
of Lemma~\ref{lemma-crossings}, say, $\Gamma=[\gamma;\zeta;\beta]$ and 
$\Gamma'=[\gamma;\zeta';\beta]$, we define $\Gamma\partleq\Gamma'$
if $\zeta\partleq \zeta'$ holds.  
\end{defin}

This defines a partial ordering $\partleq$
on the set of LR-tableaux of a given partition type 
$(\alpha,\beta,\gamma)$ with $\alpha_1\leq 2$.
It is easy to see:

\begin{lem}\label{lemma-part-ordering}
Suppose $\Gamma$, $\Gamma'$ are LR-tableaux of the same partition type
and $\Delta$, $\Delta'$ are arc diagrams of LR-type $\Gamma$, $\Gamma'$, respectively.
\begin{enumerate}
\item If $\Delta<_{\rm arc}\Delta'$ via an arc operation of type {\bf (A)} or {\bf (B)}
  then $\Gamma=\Gamma'$.
\item If $\Delta<_{\rm arc}\Delta'$ via an arc operation of type {\bf(C)} or {\bf (D)}
  then $\Gamma>_{\rm part}\Gamma'$.  \qed
\end{enumerate}
\end{lem}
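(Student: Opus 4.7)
The plan is to verify both parts by direct inspection of how each arc move rearranges the Klein-tableau entries $\singlebox 1$ and $\singlebox{2_r}$, and then pass to the underlying LR-tableau by forgetting the subscripts. The key observation is that $\zeta_i-\gamma_i$ equals the number of $\singlebox 1$ entries in row $i$ (see Section~\ref{sec-definitions}), so any change in the LR-tableau shows up directly as a change in $\zeta$.

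Part~(1) amounts to elaborating on the remark already made in Section~1.4 that moves {\bf (A)} and {\bf (B)} only change subscripts. For move {\bf (A)}, the crossing pair of arcs $m\text{-}r'$ and $m'\text{-}r$ (with $m>m'>r'>r$) is replaced by the nested pair $m\text{-}r$ and $m'\text{-}r'$; the positions of the $\singlebox 2$ entries (rows $m$ and $m'$) and of the $\singlebox 1$ entries (rows $r$ and $r'$) are identical before and after, only the subscripts are permuted. Move {\bf (B)} is entirely analogous, with a pole in place of one of the two arcs. Stripping subscripts therefore gives $\Gamma=\Gamma'$.

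For part~(2) the same inspection of moves {\bf (C)} and {\bf (D)} yields a genuine change in the LR-tableau. Move {\bf (C)} sends the crossing pair $m\text{-}r'$, $m'\text{-}r$ to the disjoint pair $m\text{-}m'$, $r'\text{-}r$: rows $m'$ and $r'$ swap roles, with row $m'$ exchanging its $\singlebox 2$ for a $\singlebox 1$ and row $r'$ exchanging its $\singlebox 1$ for a $\singlebox 2$; all other rows are untouched. Move {\bf (D)} produces the same combinatorial swap, with a pole substituting for one of the arcs. Consequently $\zeta$ and $\zeta'$ differ only at the two positions $r'$ and $m'$, each by exactly one unit.

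The partition-order comparison then follows by inspecting partial sums: $\sum_{i\leq\ell}\zeta_i$ and $\sum_{i\leq\ell}\zeta'_i$ agree outside the range $\ell\in[r',m'-1]$ and differ there by $\pm 1$. Using Lemma~\ref{lemma-crossings} to identify the smaller arc diagram as the one with strictly fewer crossings pins down the sign of the difference and delivers $\Gamma>_{\rm part}\Gamma'$. The delicate step is precisely this sign bookkeeping; the safest way to fix conventions is to first verify the relation directly on the minimal four-vertex configurations drawn in Section~1.1 and then invoke locality of the arc moves to reduce the general case to them.
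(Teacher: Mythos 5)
The paper states this lemma without proof (just "It is easy to see:"), so there is no reference argument to compare against; I will assess the proposal on its own terms.

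Your treatment of part (1) is fine: moves {\bf (A)} and {\bf (B)} permute subscripts on the $\singlebox{2_r}$ entries but leave the positions of the $\singlebox 1$ and $\singlebox 2$ boxes unchanged row by row, so the LR-type is preserved.

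For part (2) there is a genuine gap in the passage from the combinatorics to the partition order. You correctly identify the effect of {\bf (C)}/{\bf (D)}: with $m>m'>r'>r$, going from the disjoint pair $m\text{-}m'$, $r'\text{-}r$ (the diagram $\Delta$) to the crossing pair $m\text{-}r'$, $m'\text{-}r$ (the diagram $\Delta'$), row $m'$ trades a $\singlebox 1$ for a $\singlebox 2$ and row $r'$ trades a $\singlebox 2$ for a $\singlebox 1$. However, your "key observation" reads the wrong formula: the number of $\singlebox 1$'s in row $i$ is $\bar\zeta_i-\bar\gamma_i$, not $\zeta_i-\gamma_i$ (recall $\bar\zeta_j$ is a row length, $\zeta_i$ a column length). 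Thus the swap changes $\bar\zeta$ at positions $m'$ and $r'$, and the partial-sum computation you carry out is a computation with $\bar\zeta$, not with $\zeta$. This matters because the order $\partleq$ is defined by partial sums of $\zeta$, and conjugation reverses dominance for partitions of equal weight. If you drop the bars and compute naively, you obtain $\Gamma<_{\rm part}\Gamma'$, the opposite of the statement. The missing step is precisely: from $\bar\zeta'_{m'}=\bar\zeta_{m'}-1$, $\bar\zeta'_{r'}=\bar\zeta_{r'}+1$ with $r'<m'$ conclude $\bar\zeta'\geq_{\rm dom}\bar\zeta$, and then invoke $\zeta\partleq\zeta'\iff\bar\zeta'\partleq\bar\zeta$ to obtain $\zeta\geq_{\rm dom}\zeta'$, hence $\Gamma>_{\rm part}\Gamma'$.

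Your attempt to fix the sign by appealing to Lemma~\ref{lemma-crossings} does not work: that lemma identifies the crossing number $x(\Delta)$ with the deviation $x(\Pi)$ from dominance, which is a quantity measured \emph{relative to a fixed} LR-tableau $\Gamma$. It compares Klein tableaux refining the same $\Gamma$, so it gives no information about which of two distinct LR-tableaux is larger in $\partleq$. Checking small configurations, as you suggest, would of course reveal the sign, but the principled resolution is the conjugation fact above, which should be stated explicitly.
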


\subsection{Invariant subspaces}

Let $k$ be an~arbitrary field. For a partition $\alpha=(\alpha_1\geq\ldots\geq\alpha_n)$
we denote by $N_\alpha$ or by $N_\alpha(k)$ the finite dimensional $k[T]$-module
$N_\alpha(k)=k[T]/(T^{\alpha_1})\oplus\ldots\oplus k[T]/(T^{\alpha_n})$.
Note that $N_\alpha(k)$ can be considered as a~$k[T]/(T^{\alpha_1})$-module.
By $\mathcal N$ or $\mathcal{N}(k)$ 
we  denote this category of all 
nilpotent linear operators.
We write the objects of $\mathcal N$ as pairs $(V,\varphi)$
where $V$ is the underlying $k$-vector space and $\varphi:V\to V$
the nilpotent $k$-linear endomorphism given by multiplication by $T$.
If $(V,\varphi)$, $(V',\varphi')$ are objects in $\mathcal{N}$, then a~morphism 
$f:(V,\varphi)\to (V',\varphi')$ 
in $\mathcal N$
is a~linear map $f:V\to V'$ such that $\varphi'f=f\varphi$.

\smallskip
By $\Lambda$ we denote the $k$-algebra
$$ \Lambda=\left( \begin{array}{cc} k[T] &k[T] \\ 0&k[T] \end{array}\right).$$
Let $\mod(\Lambda)$ be the category of all finite dimensional right $\Lambda$-modules,
and $\mod_0(\Lambda)$ the full subcategory of $\mod(\Lambda)$ of all modules for which the
element $T={T\;0\choose 0\;T}$ acts nilpotently.
It is well known that objects in $\mod_0(\Lambda)$ 
may be identified with systems $(N_\alpha,N_\beta,f),$
where $\alpha$, $\beta$ are partitions and $f: N_\alpha\to N_\beta$ is 
a~$k[T]$-homomorphism.
Let  $A=(N_\alpha,N_\beta,f)$, $A'=(N_{\alpha'},N_{\beta'},f')$ be objects in
$\mod_0(\Lambda)$. 
A~morphism $\Psi:A\to A'$ is a~pair $(\psi_1,\psi_2)$, where
$\psi_1:N_\alpha\to N_{\alpha'}$, $\psi_2:N_\beta\to N_{\beta'}$ are homomorphisms
of $k[T]$-modules such that $f'\psi_1=\psi_2f$.  

\smallskip
Denote by $\mathcal S$ or $\mathcal{S}(k)$ the full subcategory of 
$\mod_0(\Lambda)$ consisting
of all systems $f=(N_\alpha,N_\beta,f)$,
where $f$ is a~monomorphism. 

\smallskip
For a natural number $n$,
we write $\mathcal S_n$ or $\mathcal S_n(k)$ for the full subcategory of $\mathcal S$
of all systems where the operator acts on the subspace with nilpotency index at most $n$.
Thus, the objects in $\mathcal S_2$ are the systems $(N_\alpha,N_\beta,f)$ where $\alpha_1\leq 2$.

\subsection{Pickets and bipickets}
\label{section-pickets}

The  category $\mathcal{S}_2(k)$ is of particular interest for us 
in this paper; it has discrete representation type:  Each indecomposable object is 
either isomorphic to a {\it picket} that is, it has the form 
$$P_\ell^m=(N_{(\ell)},N_{(m)},\iota)$$ 
where $0\leq\ell\leq\min\{2,m\}$ (so the ambient space $N_{(m)}$ has only one Jordan block,
and $N_{(\ell)}$ is the unique $T$-invariant subspace of dimension $\ell$),
or to a  {\it bipicket} 
$$B_2^{m,r}=(N_{(2)},N_{(m,r)},\delta)$$ 
where $1\leq r\leq m-2$
and $\delta:k[T]/(T^2)\to k[T]/(T^m)\oplus k[T]/(T^r)$ is given by
$\delta(1)=(T^{m-2},T^{r-1})$. 
Whenever we want to emphasize the dependence on the field $k$, we will write $P_\ell^m=P_\ell^m(k)$ and 
$B_2^{m,r}=B_2^{m,r}(k)$.

\medskip
For the definition of a Klein tableau for an object in $\mathcal S_2$
we refer to \cite{klein} or \cite{sch}, here we summarize by giving a 
brief description.
The tableau of a picket $P_\ell^m$ consists of a single column
of $m$ boxes; the $\ell$ boxes at the bottom carry the entries $1,\ldots,\ell$.
In case $\ell=2$, the entry $2$ has a subscript $2_r$ where $r=m-1$. 
The tableau for a bipicket $B_2^{m,r}$ has two columns of $m$ and $r$ boxes, 
which are aligned at the top.  The two boxes at the bottom are 
$\singlebox{2_r}$ and $\singlebox 1$, respectively.

\medskip
\begin{center}
\begin{tabular}{|c|c|c|c|c|}\hline
\multicolumn5{|c|}
             {\raisebox{-1ex}[0mm]{\bf Tableaux and diagrams 
                  for the objects in $\ind\mathcal S_2$}}
             \\[2ex] \hline
$X:$\raisebox{1ex}{\phantom!}\raisebox{-1.5ex}{\phantom!} & $P_0^m$ & $P_1^m$ & $P_2^m$ & $B_2^{m,r}$ \\ \hline
    \raisebox{12ex}[0mm]{$\Gamma(X):$} &
        \begin{picture}(8,12)(0,-15)
          \put(0,0)\smbox \put(0,6)\vdotbox
          \put(3,3){$\left.\makebox(0,5){}\right\}{\!}_m$}
        \end{picture}
    &   \begin{picture}(8,15)(0,-12)
          \put(0,1){\singlebox{1}}\put(0,4){\singlebox{}}\put(0,9)\vdotbox
          \put(3,5){$\left.\makebox(0,6){}\right\}{\!}_m$}
        \end{picture}
    &   \begin{picture}(8,18)(0,-9)
          \put(0,1){\singlebox{2}}\put(0,4){\singlebox{1}}
          \put(0,7){\singlebox{}}\put(0,12)\vdotbox
          \put(3,6){$\left.\makebox(0,8){}\right\}{\!}_m$}
        \end{picture}
    &   \begin{picture}(6,27)
        \put(-6,11){${}_{m\!\!}\left\{\makebox(0,13){}\right.$}
        \put(0,1){\singlebox{2}}\put(0,4){\singlebox{}}\put(0,9.3)\vdotbox
        \put(0,13){\singlebox{}}\put(0,16){\singlebox{}}
        \put(3,13){\singlebox{1}}\put(3,16){\singlebox{}}\put(0,21)\vdotbox
        \put(3,21)\vdotbox\put(6,17){$\left.\makebox(0,6){}\right\}{\!}_r$}
        \end{picture}
     \\ \hline
    \raisebox{12ex}[0mm]{$\Pi(X):$} &
        \begin{picture}(8,12)(0,-15)
          \put(0,0)\smbox \put(0,6)\vdotbox
        \put(3,3){$\left.\makebox(0,5){}\right\}{\!}_m$}
        \end{picture}
    &   \begin{picture}(8,15)(0,-12)
          \put(0,1){\singlebox{1}}\put(0,4){\singlebox{}}\put(0,9)\vdotbox
        \put(3,5){$\left.\makebox(0,6){}\right\}{\!}_m$}
        \end{picture}
    &   \begin{picture}(16,18)(0,-9)
          \put(0,4){\singlebox{1}}\put(0,7){\singlebox{}}\put(0,12)\vdotbox
        \put(0,1){\singlebox{2_r}}
        \put(3,6){$\left.\makebox(0,8){}\right\}{\!}_m$}
        \put(5,-3){$\scriptstyle r=m-1$}
        \end{picture}
    &   \begin{picture}(16,27)(-3,0)
        \put(-6,11){${}_{m\!\!}\left\{\makebox(0,13){}\right.$}
        \put(0,4){\singlebox{}}\put(0,1){\singlebox{2_r}}
        \put(0,9.3)\vdotbox
        \put(0,13){\singlebox{}}\put(0,16){\singlebox{}}\put(3,13){\singlebox{1}}\put(3,16){\singlebox{}}\put(0,21)\vdotbox
        \put(3,21)\vdotbox\put(6,17){$\left.\makebox(0,6){}\right\}{\!}_r$}
        \put(8,.6){$\scriptstyle r<m-1$}
        \end{picture}
     \\ \hline
$\Delta(X):$ & $\emptyset$ &
             $\begin{picture}(8,9)(0,3)
             \put(0,4){\line(1,0){8}}
             \put(3,3){$\bullet$}
             \put(4,4){\line(0,1){5}}
             \put(4,1){\makebox[0mm]{$\scriptstyle m$}}
             \end{picture}$
          &  
             $\begin{picture}(16,9)(0,3)
             \put(0,4){\line(1,0){16}}
             \put(3,3){$\bullet$}
             \put(11,3){$\bullet$}
             \put(8,4){\oval(8,8)[t]}
             \put(4,1){\makebox[0mm]{$\scriptstyle m$}}
             \put(12,1){\makebox[0mm]{$\scriptstyle m-1$}}
             \end{picture}$
          &  
             $\begin{picture}(16,9)(0,3)
             \put(0,4){\line(1,0){16}}
             \put(3,3){$\bullet$}
             \put(8,4){\makebox[0mm]{$\cdots$}}
             \put(11,3){$\bullet$}
             \put(8,4){\oval(8,8)[t]}
             \put(4,1){\makebox[0mm]{$\scriptstyle m$}}
             \put(12,1){\makebox[0mm]{$\scriptstyle r$}}
             \end{picture}$ \\[2ex] \hline
\end{tabular}
\end{center}

  The Klein tableau for a direct sum 
$M\oplus M'$ has a diagram given by the union $\beta\cup\beta'$ of the 
partitions representing the ambient spaces, and in each row the entries are
obtained by lexicographically ordering the entries in the corresponding rows
in the tableaux for $M$ and $M'$, with empty boxes coming first. 

\smallskip\noindent{\it Example:} The Klein tableaux $\Pi$ in the example
in Section~\ref{section-from-ses} is given by
$$X\;=\;B_2^{5,3}\oplus B_2^{4,2}\oplus P_1^3\oplus P_1^1.$$

We recall from \cite[Proposition 2]{sch}:

\begin{thm}For any field $k$, 
there is a one-to-one correspondence between the isomorphism classes 
of objects in $\mathcal S_2(k)$ and the Klein tableaux with entries at most $2$.
\qed
\label{thm-pic-bipic}\end{thm}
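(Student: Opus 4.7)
The plan is to invoke the Krull--Schmidt property of $\mathcal S_2(k)$ together with the stated classification of indecomposables to reduce the correspondence to a combinatorial accounting, and then show that the rule for assembling the tableaux of direct summands produces a well-defined Klein tableau whose entries recover the multiplicities of the summands.

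First I would note that every object in $\mathcal S_2(k)$ has finite length as a $\Lambda$-module, so the category is Krull--Schmidt and the isomorphism class of $X$ is determined by the multiset of its indecomposable summands. Combined with the stated classification, this encodes the isomorphism type of $X$ by the multiplicities of $P_0^m$, $P_1^m$, $P_2^m$ and $B_2^{m,r}$ with $1\leq r\leq m-2$. I would then define $\Pi(X)$ by merging the column diagrams of the indecomposable summands tabulated in Section~\ref{section-pickets}: the shape is the union of the $\beta$-partitions, and in each row the entries are lexicographically sorted with empty boxes first, then $\singlebox 1$'s, then $\singlebox{2_r}$'s by increasing $r$. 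I would verify that $\Pi(X)$ is a Klein tableau: weak row-increase and strict column-increase are immediate from the sorting; the lattice permutation property follows since the tableau of each indecomposable has, for every $\singlebox{2_r}$, a matching $\singlebox 1$ weakly to its right, and this property is stable under the merging; the subscript conditions~(a),~(b),~(c) are verified summand by summand, since a picket $P_2^m$ contributes $\singlebox{2_{m-1}}$ with $\singlebox 1$ directly above it and a bipicket $B_2^{m,r}$ contributes $\singlebox{2_r}$ with $r<m-1$ and empty box directly above, making~(b) vacuous there.

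Second, I would establish bijectivity by exhibiting multiplicity formulas that read the decomposition off of $\Pi$: the number of $P_2^m$ summands is the number of $\singlebox{2_{m-1}}$'s in row $m$ of $\Pi$; the number of $B_2^{m,r}$ summands (for $r<m-1$) is the number of $\singlebox{2_r}$'s appearing in row $m$; the number of $P_1^r$ summands is the total number of $\singlebox 1$'s in row $r$ minus the total number of $\singlebox{2_r}$'s in the whole tableau, which is non-negative precisely by condition~(c); and the number of $P_0^m$ summands equals the number of length-$m$ columns of $\beta$ not accounted for by the preceding summand types. Conversely, given any Klein tableau $\Pi$ with entries at most $2$, the object $X$ obtained by taking these multiplicities as the number of each indecomposable summand satisfies $\Pi(X)=\Pi$, because the Klein conditions are exactly those needed to make these multiplicities non-negative integers and to ensure that the row-by-row lexicographic resort of the merged entries recovers $\Pi$.

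The principal technical point is verifying that conditions~(a),~(b),~(c) are tight --- necessary as well as sufficient --- for inverting the assignment $X\mapsto\Pi(X)$, and that the lattice permutation property is preserved by the merging operation. Once these are pinned down, the bijection on isomorphism classes follows by direct inspection of the multiplicity formulas.
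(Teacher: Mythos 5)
The paper does not give a proof of this theorem: it is quoted verbatim from \cite[Proposition~2]{sch}, and the terminating $\square$ closes the cited statement rather than an argument in this paper. So there is no in-paper proof to compare against; the following assesses your proposal on its own.

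Your route --- Krull--Schmidt plus the stated classification of indecomposables, the merging rule for the tableaux of direct sums, and multiplicity formulas read off $\Pi$ to invert --- is the natural one. Conditions (a) and (c) do reduce to the summands as you say: (a) because the row index of each $\singlebox{2_r}$ is unchanged by merging, and (c) because both sides of the inequality are additive over summands. The gap is in your treatment of condition (b). You claim it is ``verified summand by summand'' and ``vacuous'' for bipickets because the box directly above $\singlebox{2_r}$ in the bipicket's own tableau is empty; but (b) constrains the \emph{merged} tableau, and merging re-sorts each row, so the box sitting above a given $\singlebox{2_r}$ in $\Pi(X)$ is in general not the one it sat under in the summand. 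Concretely, take $X = P_2^3 \oplus B_2^{3,1}$: row $3$ of $\Pi(X)$ contains both a $\singlebox{2_2}$ and a $\singlebox{2_1}$, and if the $\singlebox{2_1}$ ends up in the second column it lands above the $\singlebox 1$ of row $2$ and (b) fails, whereas your ``increasing $r$'' ordering happens to save it. That this ordering always works needs a count: in row $m$ the entries $2$ that lie above a $\singlebox 1$ are the rightmost $\max(0,\bar\beta_m-\bar\gamma_{m-1})$ of them, and summing contributions over indecomposable types gives $\bar\beta_m-\bar\gamma_{m-1}=\mu_{P_2^m}(X)-\mu_{P_0^{m-1}}(X)\le\mu_{P_2^m}(X)$, so the $\singlebox{2_{m-1}}$'s do cover them. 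A similar count, one that genuinely invokes (b), is also needed to show $\mu_{P_0^m}\ge 0$ when inverting an arbitrary Klein tableau. These counts are precisely the ``principal technical point'' you flag at the end, but the ``summand-by-summand'' shortcut for (b) is wrong on its face and must be replaced by the counting argument.
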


The object in $\mathcal S_2(k)$ which corresponds to the Klein tableau $\Pi$ will be 
denoted by  $M_\Pi$ or $M_\Pi(k)$.
It is straightforward to recover the indecomposable summands 
of $M_\Pi$ from $\Pi$. 
To start, note that each entry $\singlebox{2_r}$ in row $m$ gives
rise to a summand of type $B_2^{m,r}$ if $r\leq m-2$ or $P_2^m$ if $r=m-1$.

\section{Partial orders on Klein tableaux}\label{sec-partial-orders}

For partitions $\alpha$, $\beta$, $\gamma$, denote by $\mathcal S_\alpha^\beta$ 
the full subcategory of $\mathcal S$ of all embeddings 
of type $f:N_\alpha\to N_\beta$,
and by $\mathcal S_{\alpha,\gamma}^\beta$ the full subcategory of $\mathcal S_\alpha^\beta$
of those embeddings which satisfy in addition that $\Coker f\cong N_\gamma$ holds.

\smallskip
We introduce partial orders $\extleq$, $\degleq$, $\homleq$ and $\arcleq$
for objects in $\mathcal S_{\alpha,\gamma}^\beta$ and show the implications 
$\extleq\;\Longrightarrow\;\degleq\;\Longrightarrow\;\homleq.$
Using two results from Section~\ref{chapter-hom-tab},
we can complete the proofs of Theorems~\ref{thm-first-main} and \ref{thm-third-main}.

\smallskip
Whenever we deal with the degeneration order $\degleq$,
we will assume that $k$ is an algebraically closed field.

\subsection{Four partial orders}\label{sec-four-partial}
We consider the affine variety $H_\alpha^\beta(k)=\Hom_k(N_\alpha(k),N_\beta(k))$
(consisting of all $|\beta|\times |\alpha|-$matrices with coefficients in $k$). 
On $H_\alpha^\beta(k)$ we consider the Zariski topology and on all subsets of
$H_\alpha^\beta(k)$ we work with the induced topology. Let $V_\alpha^\beta(k)$
be the subset of $H_\alpha^\beta(k)$ consisting of all matrices that define a~monomorphism
in the category $\mathcal{N}(k)$. Note that $V_\alpha^\beta(k)$
is a~locally closed subset of $H_\alpha^\beta(k)$. On $V_\alpha^\beta(k)$
acts the algebraic group $\Aut_{\mathcal{N}}(N_\alpha(k))\times \Aut_{\mathcal{N}}( N_\beta(k))$
via $(g,h)\cdot f=hfg^{-1}$. Orbits of this action correspond bijectively to isomorphism
classes of objects in $\mathcal S_\alpha^\beta$.
For a map $f:N_\alpha(k)\to N_\beta(k)$, denote by $G_f$ the orbit of $f$ in
$V_\alpha^\beta(k)$.
Let
$V_{\alpha,\gamma}^\beta(k)$ be the set of monomorphisms $f$ in
 $V_\alpha^\beta(k)$ such that 
$(N_\alpha(k),N_\beta(k),f)\in\mathcal S_{\alpha,\gamma}^\beta(k)$.

\smallskip
Let $Y=(N_\alpha,N_\beta,f)$ and $Z=(N_\alpha,N_\beta,g)$ be objects 
in $\mathcal S_{\alpha,\gamma}^\beta(k)$.

\begin{itemize}
 \item The relation $Y \leq_{\rm ext} Z$  holds if there exist
a natural number $s$,  
objects  $M_i$, $U_i$, $V_i$ in $\mathcal{S}(k)$ and short exact
sequences $0\to U_i\to M_i\to V_i\to 0$ in~$\mathcal{S}(k)$  
such that $Y\cong M_1$, $U_i\oplus V_i\cong M_{i+1}$ for $1\leq i\leq s$, 
and $Z\cong M_{s+1}$.

\item The relation $Y \leq_{\rm deg} Z$ 
holds if $G_g \subseteq \overline{G_f}$ in $V_{\alpha,\gamma}^\beta(k)$,
where $\overline{G_f}$ is the closure of $G_f$.

\item The relation $Y\leq_{\rm hom} Z$  holds
if $$[X,Y]\leq [X,Z]$$ for any object $X$ in $\mathcal{S}(k)$.
Here we write $[X,Y]=\dim_k\Hom_{\Lambda}(X,Y)$ for $\Lambda$-modules $X,Y$. 

\item If the objects $Y$, $Z$ are in $\mathcal S_2$, 
then the relation $Y\leq_{\rm arc} Z$ holds if $\Pi\leq_{\rm arc}\Pi'$, where $\Pi,\Pi'$
are the Klein tableaux associated with  $Y$ and $Z$, 
respectively (see Theorem~\ref{thm-pic-bipic}).
\end{itemize}

For $\Gamma$ an LR-tableau of type $(\alpha,\beta,\gamma)$, 
let $V_\Gamma(k)$ denote the subset of $V_{\alpha,\gamma}^\beta(k)$ consisting of all
$f$ with LR-tableau $\Gamma$ (see \cite[Chapter~2]{sch}), and define
$\mathcal S_\Gamma$ correspondingly.
The partial orders $\leq_{\rm ext}$, $\leq_{\rm deg}$, $\leq_{\rm hom}$, and if applicable $\arcleq$,
can be defined via restriction for the objects in $\mathcal S_\Gamma$.

\subsection{Modules versus embeddings}

The three lemmata in this section show that we can work in the module category $\mod(\Lambda)$ 
in order to  decide the relations $\extleq$, $\degleq$ and $\homleq$.

\begin{lem}\label{lem-ext-order}
  The relation   $Y\extleq Z$  
  holds if and only if there exist a natural number $s$, 
  objects  $M_i$, $U_i$, $V_i$ in $\mod(\Lambda)$ and short exact
  sequences $0\to U_i\to M_i\to V_i\to 0$ in~$\mod(\Lambda)$  
  such that $Y\cong M_1$, 
    $U_i\oplus V_i\cong M_{i+1}$ for $1\leq i\leq s$,
    and $Z\cong M_{s+1}$ hold.
\end{lem}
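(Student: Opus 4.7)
The plan is to show that the two formulations of $\extleq$ coincide by arguing that any chain of extensions in $\mod(\Lambda)$ whose endpoints $Y$ and $Z$ happen to lie in $\mathcal{S}(k)$ must in fact consist entirely of objects in $\mathcal{S}(k)$. The ``only if'' direction is immediate: $\mathcal{S}(k)$ is a full subcategory of $\mod(\Lambda)$, so any chain witnessing $Y\extleq Z$ in the original sense of Section~\ref{sec-four-partial} is at the same time a chain in $\mod(\Lambda)$ satisfying the stated condition.

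For the substantive ``if'' direction, I would first record two closure properties of $\mathcal{S}(k)$ inside $\mod(\Lambda)$. The first is closure under direct summands, which is immediate because a direct summand of a monomorphism is itself a monomorphism. The second is closure under extensions: given a short exact sequence $0\to U\to M\to V\to 0$ in $\mod(\Lambda)$ with $U=(U_A,U_B,f_U)$ and $V=(V_A,V_B,f_V)$ both in $\mathcal{S}(k)$, I would verify that the structural map $f_M$ of $M=(M_A,M_B,f_M)$ is monic by a short diagram chase. Namely, if $m\in\ker f_M$, then its image in $V_A$ lies in $\ker f_V=0$, so $m$ belongs to $U_A$; but $f_M$ restricted to $U_A$ agrees with $f_U$, which is monic, forcing $m=0$.

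With these two properties in hand, a backward induction on the chain converts the given witness in $\mod(\Lambda)$ into one living in $\mathcal{S}(k)$. Starting from $M_{s+1}\cong Z\in\mathcal{S}(k)$, the isomorphism $U_s\oplus V_s\cong M_{s+1}$ together with closure under direct summands forces $U_s,V_s\in\mathcal{S}(k)$. The sequence $0\to U_s\to M_s\to V_s\to 0$ then has both outer terms in $\mathcal{S}(k)$, so closure under extensions yields $M_s\in\mathcal{S}(k)$. Iterating down to $M_1\cong Y$ shows that every $M_i$, $U_i$, and $V_i$ already lies in $\mathcal{S}(k)$, and the same chain therefore witnesses $Y\extleq Z$ in the original sense. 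The only piece of genuine content is the diagram chase establishing closure under extensions; after that the argument is purely formal, so I do not anticipate any real obstacle.
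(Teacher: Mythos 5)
Your proposal is correct and follows essentially the same backward induction as the paper's own proof: starting from $M_{s+1}\cong Z\in\mathcal S(k)$ and iterating closure under direct summands and under extensions back to $M_1\cong Y$. The only difference is that you spell out the verification of the two closure properties (the diagram chase for extensions), which the paper simply invokes as known facts about $\mathcal S(k)$.
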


\begin{proof} Assume that there exist $\Lambda$-modules $M_i$, $U_i$, $V_i$
satisfying the required conditions.  Since $M_{s+1}\cong
U_s\oplus V_s$, $M_{s+1}\in \mathcal{S}(k)$ and since the category $\mathcal{S}(k)$ is closed under
direct summands, the $\Lambda$-modules $U_s$, $V_s$ are in $\mathcal{S}(k)$.
Moreover there exists an exact sequence $$0\to U_s\to
U_{s-1}\oplus V_{s-1}\to V_s\to 0 $$ and therefore $U_{s-1}$,
$V_{s-1}$ are in $\mathcal{S}(k)$, because the category $\mathcal{S}(k)$  is
closed under extensions. Continuing this way we prove that all
$U_i$, $V_i$ are in $\mathcal{S}(k)$ and consequently 
$Y\extleq Z$. 
Since the converse implication is obvious, we are done.
\end{proof}

  \begin{lem}\label{lem-deg-order}
    \begin{enumerate}
    \item For points $Y=(N_\alpha,N_\beta,f)$, $Z=(N_\alpha,N_\beta,g)$ in 
      $V_{\alpha,\gamma}^\beta(k)$,
      the relation $Y\degleq Z$
      holds if and only if $G_g \subseteq \overline{G_f}^a$, 
      where $\overline{G_f}^a$ is the closure of $G_f$ 
      in the variety $M_\alpha^\beta(k)$ of $\Lambda$-modules.
    \item If\/ $\Gamma$ is an LR-tableau of type $(\alpha,\beta,\gamma)$
      and $Y=(N_\alpha,N_\beta,f)$, $Z=(N_\alpha,N_\beta,g)$ are points in $V_\Gamma(k)$,
      then the relation $Y\degleq Z$
      holds if and only if $G_g \subseteq \overline{G_f}^a$.
    \end{enumerate}
  \end{lem}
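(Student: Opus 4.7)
\textbf{Proof plan for Lemma \ref{lem-deg-order}.} The plan is to reduce both parts to the elementary topological identity that for any subset $A$ of a topological subspace $V \subseteq X$ one has $\overline{A}^{V} = \overline{A}^{X} \cap V$.

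For part (1), the first step is to realise $V_{\alpha,\gamma}^{\beta}(k)$ as a locally closed subvariety of the module variety $M_\alpha^\beta(k)$ via the map $f \mapsto (N_\alpha, N_\beta, f)$, sending a monomorphism to the corresponding $\Lambda$-module with the fixed underlying $k[T]$-modules $N_\alpha$ and $N_\beta$. The image is the intersection of the closed subvariety defined by the equations ``$T$ acts as $T_\alpha$ on the sub and as $T_\beta$ on the quotient'' with the constructive locus cut out by the cokernel condition $\Coker f \cong N_\gamma$; moreover the Zariski topology used on $V_{\alpha,\gamma}^\beta(k)$ in Section \ref{sec-four-partial} agrees with the subspace topology inherited from $M_\alpha^\beta(k)$. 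Under this identification $G_f(k)$ sits inside both $V_{\alpha,\gamma}^\beta(k)$ and $M_\alpha^\beta(k)$, so the closure $\overline{G_f(k)}^{a}$ in $M_\alpha^\beta(k)$ is well defined.

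Applying the topological identity to $A = G_f(k)$, and using that $G_g(k) \subseteq V_{\alpha,\gamma}^\beta(k)$ is automatic, the containment $G_g(k) \subseteq \overline{G_f(k)}$ in $V_{\alpha,\gamma}^\beta(k)$ is equivalent to $G_g(k) \subseteq \overline{G_f(k)}^{a} \cap V_{\alpha,\gamma}^\beta(k)$, and hence to $G_g(k) \subseteq \overline{G_f(k)}^a$. This proves part (1). Part (2) follows by exactly the same argument with $V_\Gamma(k)$ in place of $V_{\alpha,\gamma}^\beta(k)$, using that $V_\Gamma(k)$ is a locally closed subset of $V_{\alpha,\gamma}^\beta(k)$ (the Klein/LR-type being a constructive condition on embeddings) and hence inherits the subspace topology from $M_\alpha^\beta(k)$ as well.

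The only step that needs any care is verifying that the set-theoretic inclusion $V_{\alpha,\gamma}^\beta(k) \hookrightarrow M_\alpha^\beta(k)$ is a locally closed immersion of varieties, so that the topology used to define $\degleq$ coincides with the subspace topology from the module variety. Once this is checked, the statement is a direct consequence of the subspace-topology identity and requires no further input from algebraic geometry.
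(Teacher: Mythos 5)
Your proof is correct and is essentially the same argument as in the paper: the key step in both is the identity $\overline{G_f(k)}^{a} \cap V_{\alpha,\gamma}^\beta(k) = \overline{G_f(k)}$, which is exactly the subspace-topology fact you invoke (and which is immediate here because, per Section~\ref{sec-four-partial}, every subset of $H_\alpha^\beta(k)$ carries the induced topology, so no separate check of a ``locally closed immersion'' is actually needed). The only cosmetic difference is that you phrase the inclusion $V_{\alpha,\gamma}^\beta(k) \subseteq M_\alpha^\beta(k)$ as a map to be realised, whereas it is already a literal set-theoretic inclusion inside $H_\alpha^\beta(k)$ by definition.
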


\begin{proof} 
  We only show the first statement, the proof of the second statement is similar.
  The variety $M_\alpha^\beta(k)$ of $\Lambda$-modules is defined as the 
  (closed) subset of
  $H_\alpha^\beta(k)$ consisting of those linear maps which define $k[T]$-homomorphisms.
  If $(N_\alpha,N_\beta,f) \leq_{\rm deg} (N_\alpha,N_\beta,g)$, 
  then of course $G_g \subseteq \overline{G_f}^a$ 
  (because $\overline{G_f}\subseteq \overline{G_f}^a$).

  Conversely, let $G_g \subseteq \overline{G_f}^a$. 
  Since $\overline{G_f}=\overline{G_f}^a\cap
  V_{\alpha,\gamma}^\beta(k)$ 
    and $G_g\subseteq V_{\alpha,\gamma}^\beta(k)$ we have
    $G_g\subseteq \overline{G_f}$. 
    Thus $(N_\alpha,N_\beta,f) \leq_{\rm deg} (N_\alpha,N_\beta,g)$.
\end{proof}

\begin{lem}\label{lem:hom-order} Suppose $Y,Z\in\mathcal S_2$
    have the same partition type.
\begin{enumerate}
\item   The relation $Y\homleq Z$ holds
if and only if  $[X,Y]\leq[X,Z]$ holds
for any object $X$ in $\mathcal{S}_2(k)$.
\item  The relation $Y\homleq Z$ holds
if and only if  $[X,Y]\leq[X,Z]$ holds 
for any $\Lambda$-module $X$.
\end{enumerate}
\end{lem}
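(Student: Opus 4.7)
The plan is to prove both parts by constructing, for each $X$ in the larger category, a reflection $X'$ lying in the required subcategory such that $\Hom_\Lambda(X,Y) = \Hom_\Lambda(X',Y)$ for every $Y$ in $\mathcal S_2$, and similarly for $Z$. Once such a reflection is in hand, the assumed inequality transfers verbatim.

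For part (1), the forward implication is immediate from $\mathcal S_2\subseteq\mathcal S$. For the converse, suppose $Y=(N_\alpha,N_\beta,g)\in\mathcal S_2$, so $\alpha_1\leq 2$ and hence $T^2N_\alpha=0$. Given any $X=(A,B,f)\in\mathcal S$ and a morphism $\psi=(\psi_1,\psi_2):X\to Y$, one has $\psi_1(T^2A)\subseteq T^2N_\alpha=0$ and then $\psi_2(f(T^2A))=g(\psi_1(T^2A))=0$. So $\psi$ factors through the quotient
$$X' \;=\; \bigl(A/T^2A,\; B/f(T^2A),\; \bar f\bigr).$$
I would verify that $\bar f$ remains injective (if $\bar f(\bar a)=0$ then $f(a)\in f(T^2A)$, so by injectivity of $f$, $a\in T^2A$), hence $X'\in\mathcal S$, and moreover $X'\in\mathcal S_2$ since $T^2$ acts as zero on its subspace. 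This gives $\Hom_\Lambda(X,Y)=\Hom_\Lambda(X',Y)$ and the analogous identity for $Z$; the hypothesis applied to $X'\in\mathcal S_2$ then supplies $[X,Y]\leq[X,Z]$.

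For part (2), the reverse direction is immediate. For the forward direction, let $X=(A,B,f)$ be any $\Lambda$-module and $Y=(N_\alpha,N_\beta,g)\in\mathcal S_2$. Any morphism $\psi:X\to Y$ must kill $\ker f$ in the first component: if $a\in\ker f$ then $g\psi_1(a)=\psi_2 f(a)=0$, and injectivity of $g$ forces $\psi_1(a)=0$. Hence $\psi$ factors through
$$X'' \;=\; \bigl(A/\ker f,\; B,\; \bar f\bigr),$$
which is in $\mathcal S$ because $\bar f$ is injective by construction and $A/\ker f$ is a finite-dimensional nilpotent $k[T]$-module, hence of the form $N_{\mu'}$ for some partition $\mu'$. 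If $X\notin\mod_0(\Lambda)$ one first passes to its $T$-primary part: any $\psi_i$ into a nilpotent $k[T]$-module factors through the $T$-primary summand. This yields $\Hom_\Lambda(X,Y)=\Hom_\Lambda(X'',Y)$, and the inequality $[X'',Y]\leq[X'',Z]$ (available by $\homleq$) transports to $X$.

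The only obstacle is a sequence of routine but necessary verifications: that the constructed quotients actually define objects in $\mathcal S_2$, respectively in $\mathcal S$; that $\bar f$ remains an embedding; and that the resulting subspaces are parametrised by partitions. None of these is deep — they rest on the classification of finite-dimensional nilpotent $k[T]$-modules by partitions together with injectivity of $f$ or $g$ — but the argument is cleanest when phrased as the adjunction between the three categories $\mathcal S_2\subseteq\mathcal S\subseteq\mod(\Lambda)$ realised by these two quotient functors.
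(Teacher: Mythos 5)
Your proposal is correct and follows essentially the same strategy as the paper: construct, for each object of the larger category, a quotient (a left approximation) in the smaller category through which every morphism to an object of $\mathcal S_2$ factors, so that the relevant Hom-spaces agree. Your extra remark about passing to the $T$-primary part when $X\notin\mod_0(\Lambda)$ is a genuine (if minor) point of care that the paper's proof elides when it writes $X=(X_1,X_2,h)\in\mod(\Lambda)$.
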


\begin{proof}
We show the second statement first.

\smallskip
2. It is enough to prove that if $[X',Y]\leq [X',Z]$ 
for any object $X'$ in $\mathcal{S}$, then $[X,Y]\leq [X,Z]$ 
for any module $X$ in $\mod(\Lambda)$.
 
Let $X=(X_1,X_2,h)\in \mod(\Lambda)$ and write $Y=(Y_1,Y_2,f)$ where 
$f$ is a~monomorphism. Consider
the object $X'=(X_1',X_2,h')\in \mathcal{S}$,
where $X_1'=X_1/\Ker\, h$ and $h':X_1'\to X_2$ is induced by $h$. Let 
$a=(a_1,a_2):X\to Y$ be a~morphism. Note that if $x\in \Ker\, h$, then $x\in \Ker\, a_1$
because $f$ is a~monomorphism. Therefore there exists $a_1':X_1'\to Y_1$
such that $a_1=a_1'\circ {\rm can}_1$, where ${\rm can}_1: X_1\to X_1'$ is the canonical epimorphism.
 It is easy to see that the pair $a'=(a_1',a_2)$ defines a~morphism $X'\to Y$.
Writing ${\rm can} = ({\rm can}_1,1): X\to X'$, this morphism satisfies $a=a'\circ{\rm can}$.

Thus, ${\rm can}: X\to X'$ is a left approximation for $X$
in $\mathcal S$.  Since ${\rm can}$ is onto, it follows that $[X,Y]=[X',Y]$.

\smallskip
1. Next we show that if $[X'',Y]\leq [X'',Z]$ 
for any object $X''$ in $\mathcal{S}_2$, then $[X',Y]\leq 
[X',Z]$ for any object $X'$ in $\mathcal{S}$. 

Now let $X'=(X'_1,X'_2,h')\in \mathcal S$
  and $Y=(Y_1,Y_2,f)\in \mathcal{S}_2$, so $T^2Y_1=0$. 
Consider the object $X''=(X_1'',X_2'',h'')\in \mathcal{S}_2$,
where $X_1''=X'_1/T^2X'_1$, $X_2''=X_2'/h'(T^2X'_1)$  and $h'':X_1''\to X_2''$ is 
the map induced by $h'$.
Since $h'$ is a monomorphism, so is $h''$.  Let ${\rm can}=({\rm can}_1,{\rm can}_2):X'\to X''$
be the canonical map.  We show that a morphism 
$a=(a_1,a_2):X'\to Y$ factors over ${\rm can}$.  
Since $T^2Y_1=0$, we have $T^2X'_1\subseteq \Ker\, a_1$,
so $a_1$ factors over ${\rm can}_1$:
Write $a_1=a_1''\circ{\rm can}_1$. 
Since $h'(T^2X'_1)\subseteq \Ker\, a_2$, the map $a_2$ factors over ${\rm can}_2: X'_2\to X''_2$:
There is $a_2''$ with $a_2=a_2''\circ{\rm can}_2$. 
Since ${\rm can}_1$ is onto, the pair $a''=(a_1'',a_2'')$
is a morphism from $X''$ to $Y$ in $\mathcal S$ satisfying $a=a''\circ{\rm can}$.

We have seen that ${\rm can}: X'\to X''$ is a left approximation for $X'$ in $\mathcal S_2$;
since ${\rm can}$ is onto, it follows that $[X',Y]=[X'',Y]$.
We are done.
\end{proof}

\subsection{The partial orders are equivalent}

We can now complete the proofs of Theorem~\ref{thm-first-main} 
and Theorem~\ref{thm-third-main}, up to two results about the arc order which are
shown in the next section.

\smallskip
We restate both theorems to include statements about arbitrary fields.

\begin{thm}\label{thm-main} Let $k$ be an~arbitrary field and assume that
$Y,Z\in\mathcal{S}_2(k)$ have the same partition type $(\alpha,\beta,\gamma)$.
The following conditions are equivalent
\begin{enumerate}
 \item $Y\leq_{\rm arc} Z$,
 \item $Y\leq_{\rm ext} Z$,
 \item $Y\leq_{\rm hom} Z$.
\end{enumerate}
If in addition the field $k$ is algebraically closed, 
then the conditions stated above are equivalent with
\begin{enumerate}
 \item[4.] $Y\leq_{\rm deg} Z$.
\end{enumerate}
\end{thm}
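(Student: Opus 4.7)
My proof plan is to close the cycle
$$\arcleq \;\Longrightarrow\; \extleq \;\Longrightarrow\; \degleq \;\Longrightarrow\; \homleq \;\Longrightarrow\; \arcleq,$$
observing that the implication $\extleq\Rightarrow\degleq$ and the very definition of $\degleq$ require $k$ to be algebraically closed, while the three-term sub-cycle $\arcleq\Rightarrow\extleq\Rightarrow\homleq\Rightarrow\arcleq$ makes sense and should go through over an arbitrary field.

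For the easy implications I would argue as follows. To see $\extleq\Rightarrow\homleq$, apply $\Hom_\Lambda(X,-)$ to each short exact sequence $0\to U_i\to M_i\to V_i\to 0$ from Lemma~\ref{lem-ext-order} and read off $[X,M_i]\leq[X,U_i]+[X,V_i]=[X,M_{i+1}]$ from the long exact sequence; chaining gives $[X,Y]\leq[X,Z]$ for every $X\in\mod(\Lambda)$, and Lemma~\ref{lem:hom-order} lets us restrict testing to $X\in\mathcal{S}_2(k)$. For $\extleq\Rightarrow\degleq$ over an algebraically closed field, I would invoke Riedtmann's result in $\mod(\Lambda)$: each sequence $0\to U_i\to M_i\to V_i\to 0$ yields that $U_i\oplus V_i=M_{i+1}$ lies in the closure of the orbit of $M_i$ inside the module variety $M_\alpha^\beta(k)$, and Lemma~\ref{lem-deg-order} then transports this degeneration into the locally closed subset $V_{\alpha,\gamma}^\beta(k)$, where iteration along the chain gives $Y\degleq Z$. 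The implication $\degleq\Rightarrow\homleq$ is the classical upper semicontinuity of the function $f\mapsto\dim_k\Hom_\Lambda\bigl(X,(N_\alpha,N_\beta,f)\bigr)$ on the module variety; its maximum over $\overline{G_f(k)}$ is attained on the boundary, so $G_g(k)\subseteq\overline{G_f(k)}$ forces $[X,Z]\geq[X,Y]$, and again Lemma~\ref{lem:hom-order} reduces testing to $\mathcal{S}_2$.

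What remains, and what constitutes the substance of Section~\ref{chapter-hom-tab}, are the implications $\homleq\Rightarrow\arcleq$ and $\arcleq\Rightarrow\extleq$. For $\arcleq\Rightarrow\extleq$, my plan is to reduce to the case of a single elementary arc move of type \textbf{(A)}--\textbf{(D)} and, for each such move, to exhibit a concrete short exact sequence $0\to U\to M\to V\to 0$ in $\mathcal{S}_2(k)$ which realizes the change: locally this amounts to replacing a pair of pickets/bipickets by another such pair with the same partition data but different subscripts or column structure, and the four move types correspond to four explicit short exact sequences among pickets and bipickets. The hard part is $\homleq\Rightarrow\arcleq$: from the purely numerical input $[X,Y]\leq[X,Z]$ for all $X\in\mathcal S_2(k)$, one must extract a combinatorial sequence of arc moves transforming $Z$ into $Y$. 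Here my approach is to first test against the indecomposables $P_\ell^m$ and $B_2^{m,r}$ to pin down the positions of arc endpoints in the two diagrams, and then to design an algorithm, proceeding by (descending) induction on the crossing number $x(\Delta)$ made available by Lemma~\ref{lemma-crossings}, which greedily locates an admissible move \textbf{(A)}--\textbf{(D)} applied to $\Delta(Z)$ that decreases the distance to $\Delta(Y)$ while preserving the hypothesis $\homleq$ at each intermediate step.

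Finally, I would note that conditions (1)--(3) are intrinsically combinatorial or hom-theoretic and that the classification of $\mathcal{S}_2(k)$ by Klein tableaux (Theorem~\ref{thm-pic-bipic}) is valid for every field $k$; hence their equivalence is automatically field-independent. The geometric condition (4) is adjoined to the cycle only under the algebraic closure assumption needed to give $V_{\alpha,\gamma}^\beta(k)$ its usual variety structure.
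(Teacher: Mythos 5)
Your proposal is correct and mirrors the paper's own argument: the cycle $\arcleq\Rightarrow\extleq\Rightarrow\homleq\Rightarrow\arcleq$ over an arbitrary field, with $\degleq$ spliced in between $\extleq$ and $\homleq$ when $k$ is algebraically closed, is exactly how the paper organizes the proof (using Lemma~\ref{lem-tab-ext} for $\arcleq\Rightarrow\extleq$, the cited Bongartz--Riedtmann results together with Lemmata~\ref{lem-deg-order}, \ref{lem-ext-order}, \ref{lem:hom-order} for the geometric links, and Theorem~\ref{theorem-hom-arc} for $\homleq\Rightarrow\arcleq$). Your sketches of the constituent implications, including the explicit short exact sequences for each arc move and the greedy algorithm driven by the Hom-matrix for $\homleq\Rightarrow\arcleq$, match the paper's Sections~\ref{section-operations} and \ref{chapter-hom-tab} in substance.
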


\begin{proof}
Applying the functor
$\Hom_k(X,-)$ to the short exact sequences given in the definition of $\leq_{\rm ext}$,
it is easy to see that $Y\leq_{\rm ext} Z$ implies $Y\leq_{\rm hom} Z.$

If $k$ is an~algebraically closed field, then by \cite{bongartz,riedtmann}, 
Lemmata \ref{lem-deg-order}, \ref{lem-ext-order} and \ref{lem:hom-order}
we have 
$$Y\leq_{\rm ext} Z \;\Longrightarrow\; Y\leq_{\rm deg} Z\;\Longrightarrow\; Y\leq_{\rm hom} Z.$$ 
The implications
$$Y\leq_{\rm hom} Z \;\Longrightarrow\; Y\leq_{\rm arc} Z\;\Longrightarrow\; Y\leq_{\rm ext} Z$$ 
independently on $k$ follow
from Theorem \ref{theorem-hom-arc} and Lemma \ref{lem-tab-ext}, respectively.
\end{proof}

\begin{cor}\label{corollary-hom-arc}
  Let $k$ be an arbitrary field.
  Suppose $Y$ and $Z$ are objects in $\mathcal S_2(k)$ corresponding to
  the same LR-tableau.  Then
  $$Y \leq_{\rm hom} Z \;\Longleftrightarrow\; Y\leq_{\rm arc} Z 
      \;\Longleftrightarrow\; Y\extleq Z.$$
  In this case it is possible to convert the arc diagram for $Z$ into the 
  arc diagram for $Y$ using only operations of type {\bf (A)} and  {\bf (B)}.

  \smallskip
  If $k$ is an algebraically closed field, then the above conditions are equivalent
  to $Y\degleq Z$.
\end{cor}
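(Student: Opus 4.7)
The plan is to extract this corollary from Theorem~\ref{thm-main} together with Lemma~\ref{lemma-part-ordering}; essentially no new arguments are needed, only a bookkeeping observation using the partial order $\partleq$ on LR-tableaux.

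First, since $Y$ and $Z$ are assumed to correspond to the same LR-tableau, they in particular share the same partition type $(\alpha,\beta,\gamma)$. Theorem~\ref{thm-main} therefore yields the three-way equivalence $Y\homleq Z \Longleftrightarrow Y\arcleq Z \Longleftrightarrow Y\extleq Z$ over any field $k$, and adds the equivalence with $\degleq$ when $k$ is algebraically closed. Thus the stated chain of equivalences follows immediately from the more general Theorem~\ref{thm-main}.

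It remains to verify the final assertion: whenever the equivalent conditions hold, the arc diagram of $Z$ can be converted to that of $Y$ using only moves of types \textbf{(A)} and \textbf{(B)}. The plan is to unfold the definition of $Y\arcleq Z$ to obtain some sequence
$$\Delta(Z) \;=\; \Delta_0 \;>_{\rm arc}\; \Delta_1 \;>_{\rm arc}\; \cdots \;>_{\rm arc}\; \Delta_n \;=\; \Delta(Y)$$
of single arc moves. Writing $\Gamma_i$ for the LR-tableau underlying $\Delta_i$, Lemma~\ref{lemma-part-ordering} tells us that moves of type \textbf{(A)} or \textbf{(B)} leave the LR-tableau unchanged, while moves of type \textbf{(C)} or \textbf{(D)} strictly increase it in $\partleq$. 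Consequently $\Gamma_0 \partleq \Gamma_1 \partleq \cdots \partleq \Gamma_n$ is a weakly increasing chain.

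By hypothesis $\Gamma_0 = \Gamma_n$, and since $\partleq$ is a partial order this forces the chain to be constant. No move in the sequence can therefore be of type \textbf{(C)} or \textbf{(D)}, and every move used to transform $\Delta(Z)$ into $\Delta(Y)$ must be of type \textbf{(A)} or \textbf{(B)}. (In fact this shows that \emph{every} sequence realizing $Y\arcleq Z$ avoids \textbf{(C)} and \textbf{(D)}, slightly stronger than what is claimed.) I do not foresee any real obstacle: the substantive work has already been done in Theorem~\ref{thm-main} (and, for the $\homleq\Rightarrow\arcleq$ direction appearing there, by the algorithm constructed in Section~\ref{chapter-hom-tab}), and the restriction to moves of types \textbf{(A)} and \textbf{(B)} is a direct consequence of the monotonicity recorded in Lemma~\ref{lemma-part-ordering}.
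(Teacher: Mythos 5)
Your proof is correct and takes essentially the same route as the paper: deduce the three-way (or four-way) equivalence from Theorem~\ref{thm-main} after observing that equal LR-tableaux forces equal partition type, then use Lemma~\ref{lemma-part-ordering} to see that any chain of arc moves between two diagrams with the same underlying LR-tableau cannot contain a \textbf{(C)} or \textbf{(D)} step. Your argument for the last point is in fact spelled out a bit more cleanly than the paper's (you note that the induced chain of LR-tableaux is $\partleq$-monotone with equal endpoints, hence constant). Two small remarks: the paper's own proof says ``we obtain from Theorem~\ref{thm-lattice}'' where it clearly means Theorem~\ref{thm-main} — you correctly used the latter; and the paper additionally invokes the second part of Lemma~\ref{lem-deg-order} to make sure the degeneration order restricted to $V_\Gamma$ agrees with the one on $V_{\alpha,\gamma}^\beta$ before applying Theorem~\ref{thm-main}'s $\degleq$-clause, a point your write-up passes over implicitly (it is harmless here, since Lemma~\ref{lem-deg-order} shows both coincide with containment in the module-variety orbit closure).
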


\begin{proof}
Suppose $Y,Z$ are objects in $\mathcal S_2$ corresponding to the same LR-tableau $\Gamma$.
Since $Y,Z$ have the same partition type, we obtain from Theorem \ref{thm-lattice} that
the partial orders coincide.
We have seen in Lemma~\ref{lemma-part-ordering} 
that arc operations of type {\bf (C)} or {\bf (D)}
change the LR-type of the module in one direction.  
As a consequence, if $Y\homleq Z$ holds then
the arc diagram for $Y$ can be obtained from the arc diagram for $Z$ 
by using only arc operations of type {\bf (A)} or {\bf (B)}.
In case the field $k$ is algebraically closed, the above proof where
we use the second part in Lemma~\ref{lem-deg-order} shows that 
$$Y\leq_{\rm ext} Z \;\Longrightarrow\; Y\leq_{\rm deg} Z\;\Longrightarrow\; Y\leq_{\rm hom} Z.$$ 
\end{proof}

\section{When are two arc diagrams in $\leq_{\rm arc}$-relation?}
\label{chapter-hom-tab}

Our aim is to show 
that $Y \leq_{\rm hom} Z$ implies $Y\leq_{\rm arc} Z$.

\begin{thm}\label{theorem-hom-arc}
  Suppose the objects $Y$ and $Z$ in $\mathcal S_2(k)$ have 
  the same partition type.
    If $Y \leq_{\rm hom} Z$ holds, then so does $Y\leq_{\rm arc} Z$.
\end{thm}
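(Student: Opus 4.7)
The plan is to construct an explicit algorithm: given $Y \homleq Z$ with $\Delta(Y) \neq \Delta(Z)$, I produce a single arc move applied to $\Delta(Z)$ yielding an object $Z'$ satisfying $\Delta(Z') <_{\rm arc} \Delta(Z)$ and $Y \homleq Z'$. Since $\arcleq$ is well-founded on the finite set of arc diagrams of fixed partition type $(\alpha,\beta,\gamma)$, iteration then yields $Y \arcleq Z$. This approach has the added benefit of delivering the algorithm promised in the introduction for question Q2.

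The first step is to compute the dimensions $[X,Y]$ for $X$ ranging over the indecomposable objects $P^m_0, P^m_1, P^m_2, B^{m,r}_2$ of $\mathcal{S}_2$; by Lemma \ref{lem:hom-order} it suffices to test against these. Because $Y$ and $Z$ decompose into direct sums of pickets and bipickets, each such dimension becomes a weighted sum of elementary counts that can be read off directly from the arc diagram: numbers of arcs with a prescribed right endpoint, numbers of poles weakly to the right of a given position, numbers of arcs spanning a given interval, and so on. The hypothesis $Y \homleq Z$ then translates into a family of combinatorial inequalities between $\Delta(Y)$ and $\Delta(Z)$ — essentially statements that for each pair $(m,r)$ the number of decorations in $\Delta(Y)$ meeting a certain geometric condition is bounded by the corresponding number in $\Delta(Z)$.

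The second step is the algorithm itself. I scan the vertices $1,2,\ldots,e$ from right to left and locate the first vertex $v$ at which the diagrams disagree — either in the presence of a pole, or in the source of an incoming arc, or in the target of an outgoing arc. The inequalities from the first step then pinpoint two decorations in $\Delta(Z)$ (two arcs, or an arc and a pole) whose mutual configuration forces the existence of a crossing that can be resolved by exactly one of the moves (A)--(D). Applying this move strictly decreases the arc order, and the key verification is that each of the hom inequalities survives — which reduces to tracking how the local counts change under a single move and comparing to the analogous counts for $\Delta(Y)$.

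The hard part will be the case analysis in step three. A single arc in $\Delta(Z)$ may participate in several crossings simultaneously, so the choice of move must be made with care: a poor choice could break a hom inequality tied to some large bipicket test module. The technical heart of the argument is therefore a rigidity statement: if \emph{no} single move of type (A)--(D) applied to $\Delta(Z)$ both reduces $\Delta(Z)$ in arc order and preserves $Y \homleq$, then the accumulated inequalities already force $\Pi(Y) = \Pi(Z)$. Extracting this dichotomy from the combinatorial inequalities, and handling the interaction of the four move types within it, is where most of the work lies.
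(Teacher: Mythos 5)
Your high-level strategy matches the paper: iteratively produce a single arc move $\Delta(Z)\to\Delta(Z')$ with $Z'\arcleq Z$, $Z'\not\cong Z$, and $Y\homleq Z'$, then terminate by well-foundedness. That is exactly the structure of Proposition~\ref{proposition-key-lemma}. But there is a genuine gap, and the two places where your plan diverges from what actually gets the job done are also where the gap sits.

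First, your criterion for selecting the move --- scan vertices $1,\ldots,e$ from right to left, find the first place the diagrams disagree, and resolve a crossing forced there --- is not a workable selection rule. The example worked out in Section~\ref{section-example} of the paper already shows why: there is an inviting, locally obvious move on $\Delta(Z)$ that produces $\tilde Z$ with $\tilde Z \homleq Y$ rather than $Y\homleq\tilde Z$. Nothing in your vertex-scanning criterion detects or avoids that overshoot, because whether a move preserves $Y\homleq$ is a global condition involving hom counts against \emph{all} test bipickets $B_2^{\ell,t}$, not only those supported near the vertex of first disagreement. Your phrase ``the inequalities ... pinpoint two decorations'' is precisely the assertion that needs proof, and it is not established.

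Second, and more importantly, you have not identified the structural fact that makes the iteration provable. The paper's argument does not work on the arc diagram directly but on the difference $\delta H(Y,Z)_X = [X,Z]-[X,Z]$ indexed over the AR quiver of $\mathcal S_2^n$ (a $\mathbb Z\mathbb A_{n-2}$ M\"obius band with zero boundary, Lemma~\ref{lemma-zeros}). The explicit computations in Section~\ref{section-operations} show that \emph{each} arc move of type (A), (B), (C), (D) changes $\delta H$ by subtracting the characteristic function of a parallelogram in that quiver. This is the key discretization: the problem becomes one of decomposing a nonnegative integer-valued function on a M\"obius strip as a sum of characteristic functions of admissible parallelograms with combinatorial side conditions (condition (P2), which ties the parallelogram corners to positive entries of the multiplicity matrix $\delta M$, itself recoverable from $\delta H$ by AR sequences via Lemma~\ref{lem-hom-yields-mult}). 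The positivity-propagation Lemmas~\ref{lemma-positive-region} and~\ref{lemma-positive-region-ii} are what guarantee such a parallelogram always exists whenever $\delta H\neq 0$. Your ``rigidity statement'' is exactly this existence claim, but you have no mechanism to prove it; without the parallelogram structure in the Hom matrix, the case analysis you anticipate has no traction. As written, the proposal is an outline with the technical core marked ``to be done,'' and the core is where the theorem lives.
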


Consider the following example.

$$
\begin{picture}(32,15)(0,3)
\put(-8,8){$Z:$}
\put(0,4){\line(1,0){32}}
\multiput(3,3)(4,0)7{$\bullet$}
\put(14,4){\oval(20,20)[t]}
\put(20,4){\oval(16,16)[t]}
\put(14,4){\oval(12,12)[t]}
\put(16,4){\line(0,1){15}}
\put(4,1){\makebox[0mm]{$\scriptstyle 7$}}
\put(8,1){\makebox[0mm]{$\scriptstyle 6$}}
\put(12,1){\makebox[0mm]{$\scriptstyle 5$}}
\put(16,1){\makebox[0mm]{$\scriptstyle 4$}}
\put(20,1){\makebox[0mm]{$\scriptstyle 3$}}
\put(24,1){\makebox[0mm]{$\scriptstyle 2$}}
\put(28,1){\makebox[0mm]{$\scriptstyle 1$}}
\end{picture}
\qquad\qquad\qquad
\begin{picture}(32,15)(0,3)
\put(-8,8){$Y:$}
\put(0,4){\line(1,0){32}}
\multiput(3,3)(4,0)7{$\bullet$}
\put(12,4){\oval(16,16)[t]}
\put(16,4){\oval(16,16)[t]}
\put(14,4){\oval(4,4)[t]}
\put(28,4){\line(0,1){15}}
\put(4,1){\makebox[0mm]{$\scriptstyle 7$}}
\put(8,1){\makebox[0mm]{$\scriptstyle 6$}}
\put(12,1){\makebox[0mm]{$\scriptstyle 5$}}
\put(16,1){\makebox[0mm]{$\scriptstyle 4$}}
\put(20,1){\makebox[0mm]{$\scriptstyle 3$}}
\put(24,1){\makebox[0mm]{$\scriptstyle 2$}}
\put(28,1){\makebox[0mm]{$\scriptstyle 1$}}
\end{picture}
$$

It turns out that $Y\leq_{\rm hom}Z$. We want to show that 
$Y\leq_{\rm arc}Z$.
In order to satisfy the condition in the definition of the $\leq_{\rm arc}$-order
we need to exhibit a sequence of steps of type
  {\bf (A)}, {\bf (B)}, {\bf (C)} and {\bf (D)},
which transform the arc diagram for $Y$ into the arc diagram for $Z$.
This does  not seem to be entirely trivial, since even inviting
moves can destroy the $\leq_{\rm hom}$-relation.  Consider for example
the operation of the type {\bf (B)}
which replaces the arc from 5 to 1 and the pole at 4 in $Z$ by
the arc from 5 to 4 and a pole at 1.  This yields $\tilde Z$:

$$
\begin{picture}(32,15)(0,3)
\put(-8,8){$\tilde Z:$}
\put(0,4){\line(1,0){32}}
\multiput(3,3)(4,0)7{$\bullet$}
\put(14,4){\oval(20,20)[t]}
\put(14,4){\oval(12,12)[t]}
\put(14,4){\oval(4,4)[t]}
\put(28,4){\line(0,1){15}}
\put(4,1){\makebox[0mm]{$\scriptstyle 7$}}
\put(8,1){\makebox[0mm]{$\scriptstyle 6$}}
\put(12,1){\makebox[0mm]{$\scriptstyle 5$}}
\put(16,1){\makebox[0mm]{$\scriptstyle 4$}}
\put(20,1){\makebox[0mm]{$\scriptstyle 3$}}
\put(24,1){\makebox[0mm]{$\scriptstyle 2$}}
\put(28,1){\makebox[0mm]{$\scriptstyle 1$}}
\end{picture}
$$

Now however, we have gone too far as $\tilde Z\leq_{\rm arc}Y$
by a move of type {\bf (A)}, and also $\tilde Z\leq_{\rm hom}Y$
(consider the functor $\Hom(B_2^{7,3},-)$).  We will come back to this 
example in Section~\ref{section-example}.

\medskip
The strategy for the proof of Theorem~\ref{theorem-hom-arc}
is to gain thorough understanding on homomorphisms between 
objects in $\mathcal S_2(k)$.  Recall that $Y\leq_{\rm hom}Z$
if and only if for each indecomposable $X$, the integer
$$\delta H(Y,Z)_X\;=\;[X,Z]-[X,Y]$$
is nonnegative, where $[A,B]=[A,B]_{\mathcal S}=
\dim_k\Hom_{\mathcal S}(A,B)$. 

\smallskip
In the first section we will give explicit formulae for the
dimensions of the homomorphism spaces between objects in $\mathcal S_2$.

\smallskip
Then in Section~\ref{section-operations} 
we show how those dimensions change when $Y$ 
is replaced by $Z$, the module obtained from $Y$ 
by performing an operation of type 
{\bf (A)}, {\bf (B)}, {\bf(C)} or {\bf (D)}
on the arc diagram. 

\smallskip
We analyze the Hom-matrix $\delta H(Y,Z)$
which forms the basis of our construction of moves in the 
$\leq_{\rm arc}$-direction. In particular, we can read off from this matrix
the multiplicities of indecomposable objects as direct summands
in $Y$ and in $Z$, respectively.

\smallskip
The example in Section~\ref{section-example} illustrates how the 
decomposition of the Hom-matrix as a sum of characteristic functions
of suitable parallelograms translates into the desired sequence of
$\leq_{\rm arc}$-moves. 

\smallskip
In the last section we conclude the proof of Theorem~\ref{theorem-hom-arc}.
For this we show in Proposition~\ref{proposition-key-lemma}
that whenever the Hom-matrix $\delta H(Y,Z)$ is nonnegative
and nonzero, then there exists an $\arcleq$-move which leads to a new matrix
with smaller nonnegative entries.

\subsection{The category $\mathcal S_2(k)$}\label{section-dim-hom}

Denote by $\mathcal S_2^n(k)$ the full subcategory of $\mathcal S_2(k)$
of all objects where the operator acts with nilpotency index at most $n$.
We have seen in \cite[Section~3.2]{s-b} that $\mathcal S_2^n(k)$ is an
exact Krull-Remak-Schmidt category with Auslander-Reiten sequences.  
Interestingly, the Auslander-Reiten sequences starting at a given object
do not depend on the choice of $n$, provided this number is large enough,
and hence are Auslander-Reiten sequences in the category $\mathcal S_2(k)$.
Regarding Auslander-Reiten sequences ending at a given object, the
dual result holds with the exception of 
the objects of type $P_1^r$ which occur as end terms
of the following Auslander-Reiten sequences in $\mathcal S_2^n(k)$
$$0 \longrightarrow B_2^{n,r-1}\longrightarrow B_2^{n,r}\oplus P_1^{r-1}
    \longrightarrow P_1^r\longrightarrow 0$$
($2\leq r\leq n-2$).  

\smallskip
The Auslander-Reiten quiver for each of the categories $\mathcal S_2^n(k)$
is obtained by identifying the objects of type $P_1^r$ on the left with their
counterparts on the
right in the following picture, thus yielding a Moebius band.

$$
\beginpicture\setcoordinatesystem units <.85cm,.85cm>
\setcounter{boxsize}{2}
\put {} at 0 6.5
\put {} at 0 -.8
\setlength\unitlength{1mm}
\put{$P_1^1$} at 0 6
\put{$P_1^2$} at 1 4
\put{$P_0^1$} at 2 6
\put{$P_2^2$} at 2 4.5
\put{$P_1^3$} at 2 3
\put{$B_2^{3,1}$} at 3 4
\put{$P_1^4$} at 3 2
\put{$P_2^3$} at 4 6
\put{$P_0^2$} at 4 4.5
\put{$B_2^{4,1}$} at 4 3
\put{$P_1^5$} at 4 1
\put{$B_2^{4,2}$} at 5 4
\put{$B_2^{5,1}$} at 5 2
\put{$P_0^3$} at 6 6
\put{$P_2^4$} at 6 4.5
\put{$B_2^{5,2}$} at 6 3
\put{$P_1^n$} at 6 -1
\put{$B_2^{5,3}$} at 7 4
\put{$B_2^{n,1}$} at 7 0
\put{$P_2^5$} at 8 6
\put{$P_0^4$} at 8 4.5
\put{$B_2^{n,2}$} at 8 1
\put{$P_1^1$} at 8 -1
\put{$B_2^{n,3}$} at 9 2
\put{$P_1^2$} at 9 0
\put{$P_1^3$} at 10 1
\put{$B_2^{n,n\text{-}2}\;$} at 11 4
\put{$P_1^{n\text-2}$} at 12 3
\put{$P_2^n$} at 12 4.5
\put{$P_0^{n\text-1}$} at 12 6
\put{$P_0^n$} at 13 7
\put{$P_1^{n\text{-}1}$} at 13 5
\put{$P_1^n$} at 14 6
\arr{.3 5.4}{.7 4.6}
\arr{1.3 4.6}{1.7 5.4}
\arr{1.3 4.15}{1.7 4.35}
\arr{1.3 3.7}{1.7 3.3}
\arr{2.3 5.4}{2.7 4.6}
\arr{2.3 4.35}{2.7 4.15}
\arr{2.3 3.3}{2.7 3.7}
\arr{2.3 2.7}{2.7 2.3}
\arr{3.3 4.6}{3.7 5.4}
\arr{3.4 4.2}{3.7 4.35}
\arr{3.3 3.7}{3.7 3.3}
\arr{3.3 2.3}{3.7 2.7}
\arr{3.3 1.7}{3.7 1.3}
\arr{4.3 5.4}{4.7 4.6}
\arr{4.3 4.35}{4.7 4.15}
\arr{4.4 3.4}{4.7 3.7}
\arr{4.3 2.7}{4.7 2.3}
\arr{4.3 1.3}{4.7 1.7}
\arr{4.3 .7}{4.7 .3}
\arr{5.3 4.6}{5.7 5.4}
\arr{5.4 4.2}{5.7 4.35}
\arr{5.3 3.7}{5.7 3.3}
\arr{5.3 2.3}{5.7 2.7}
\arr{5.3 1.7}{5.7 1.3}
\arr{5.3 -.3}{5.7 -.7}
\arr{6.3 5.4}{6.7 4.6}
\arr{6.3 4.35}{6.7 4.15}
\arr{6.4 3.4}{6.7 3.7}
\arr{6.3 2.7}{6.7 2.3}
\arr{6.3 -.7}{6.7 -.3}
\arr{6.3 .7}{6.7 .3}
\arr{7.3 4.6}{7.7 5.4}
\arr{7.4 4.2}{7.7 4.35}
\arr{7.3 3.7}{7.7 3.3}
\arr{7.3 1.7}{7.7 1.3}
\arr{7.3 .3}{7.7 .7}
\arr{7.3 -.3}{7.7 -.7}
\arr{8.3 2.7}{8.7 2.3}
\arr{8.3 1.3}{8.7 1.7}
\arr{8.3 .7}{8.7 .3}
\arr{8.3 -.7}{8.7 -.3}
\arr{9.3 2.3}{9.7 2.7}
\arr{9.3 1.7}{9.7 1.3}
\arr{9.3 .3}{9.7 .7}
\arr{10.3 3.3}{10.7 3.7}
\arr{10.3 1.3}{10.7 1.7}
\arr{11.5 4.25}{11.7 4.35}
\arr{11.3 4.6}{11.7 5.4}
\arr{11.3 3.7}{11.7 3.3}
\arr{11.3 2.3}{11.7 2.7}
\arr{12.3 3.6}{12.7 4.4}
\arr{12.3 5.7}{12.7 5.3}
\arr{12.3 6.3}{12.7 6.7}
\arr{13.3 6.7}{13.7 6.3}
\arr{13.3 5.3}{13.7 5.7}
\setdots<2pt>
\plot 0.3 6  1.7 6 /
\plot 2.3 6  3.7 6 /
\plot 4.3 6  5.7 6 /
\plot 6.3 6  7.7 6 /
\plot 8.3 6  9 6 /
\plot 11 6  11.7 6 /
\plot 2.3 4.5  3.7 4.5 /
\plot 4.3 4.5  5.7 4.5 /
\plot 6.3 4.5  7.7 4.5 /
\plot 8.3 4.5  9 4.5 /
\plot 11 4.5  11.7 4.5 /
\plot 6.3 -1  7.7 -1 /
\multiput{$\ddots$} at 8 3  7 2  6 1  5 0 /
\multiput{$\cdots$} at 10 6  10 4.5 /
\multiput{$\cdot$} at 9.9 2.9  10 3  10.1 3.1     10.9 1.9  11 2  11.1 2.1 /
\endpicture
$$

For each pair $(X,Y)$ of indecomposable objects in $\mathcal S_2(k)$ we determine
in the table below
the dimension of the $k$-space $\Hom_{\mathcal S}(X,Y)$. Most of the numbers are taken 
from \cite[Lemma 4]{sch}.

\begin{figure}[tbh]
\begin{center}
\begin{tabular}{|r|@{}c@{}|@{}c@{}|@{}c@{}|@{}c@{}|}\hline
\multicolumn{5}{|c|}{\bf Dimensions of Spaces $\Hom(X,Y)$, $X,Y\in \ind\mathcal S_2(k)$}\\
\hline
  $X$ &  $Y=P_0^m$  &  $P_2^m$  &  $B_2^{m,r}$  &  $P_1^m$  \\ 
\hline \hline
$P_0^\ell$ & $\min\{\ell,m\}$ & $\min\{\ell,m\}$ 
            & $\begin{array}{l} \min\{\ell,m\} \\ + \min\{\ell,r\} \end{array}$ 
            & $\min\{\ell,m\}$ \\
\hline
$P_2^\ell$      & $\min\{\ell-2,m\}$
               & $\min\{\ell,m\}$
               & $\begin{array}{l} \min\{\ell-1,m\} \\ +\min\{\ell-1,r\} \end{array}$
               & $\min\{\ell-1,m\}$ \\
\hline
$B_2^{\ell,t}$ & $\begin{array}{l}  \min\{\ell-1,m\} \\ + \min\{t-1,m\} \end{array}$
             & $\begin{array}{l}  \min\{\ell,m\} \\ + \min\{t,m\} \end{array}$
             & $\begin{array}{l}  \min\{\ell-1,m\} \\ + \min\{t,m\} \\ 
                        + \min\{\ell-1,r\} \\ + \min\{t,r\} \\-  {\bf 1}\{\ell>m\,\text{and}\,t\leq r\} \end{array}$
             & $\begin{array}{l} \min\{\ell-1,m\} \\ + \min\{t,m\} \end{array}$ \\
\hline
$P_1^\ell$ & $\min\{\ell-1, m\}$ 
          & $\min\{\ell,m\}$
          & $\begin{array}{l}  \min\{\ell,m\} \\ +  \min\{\ell-1,r\} \end{array}$
          & $\min\{\ell,m\}$ \\
\hline
\end{tabular}
\end{center}
\end{figure}

We denote by {\bf 1} the {\it characteristic function} corresponding to the property 
specified in parantheses.

\medskip
It will help us simplify the presentation in the next section by introducing the
notation
$$B_2^{m,m-1}\;=\;P_2^m\oplus P_0^{m-1}$$
for $m\geq 2$.
We observe that the notation is consistent with the formulae above.

\subsection{How operations change the hom spaces}\label{section-operations}

Throughout this section, $Y,Z\in\mathcal S_2$ will be objects of the same partition type.
We introduce two matrices, the multiplicity matrix $\delta M=\delta M(Y,Z)$ 
and the hom matrix $\delta H=\delta H(Y,Z)$; in each case the indexing set 
is the set of isomorphism types of indecomposable objects in $\mathcal S_2$.
The matrices are defined as follows:
$$\delta M_X=\mu_X(Z)-\mu_X(Y), \quad\text{and}\quad 
  \delta H_X=[X,Z]_{\mathcal S}-[X,Y]_{\mathcal S},$$
where $[X,Z]_{\mathcal S}=\dim\Hom_{\mathcal S}(X,Z)$ and 
where $\mu_X(Z)$ denotes the number of direct summands of $Z$ that are isomorphic to $X$.

\medskip
We visualize the matrices by indicating the value at $X\in\ind\mathcal S_2$ 
in the position of $X$ in the Auslander-Reiten quiver for $\mathcal S_2^n$, 
with $n$ large enough. 
We sketch this quiver as follows:  The modules on the top line are 
the $P_2^m$, those on the second line are the $P_0^r$;
the modules in the triangle have type $B_2^{m,r}$. 
The modules $P_1^r$ are repeated twice, on the diagonal at the left
and on the antidiagonal at the right.

$$
\beginpicture\setcoordinatesystem units <.85cm,.85cm>
\multiput{} at 0 0  4 4 /
\plot -.3 3.9  3.5 3.9 /
\plot -.3 3.45  3.5 3.45 /
\plot 1.7 .45  0 3  3.5 3 /
\plot 2 0  4.3 3.45 / 
\plot -.9 3.45  1.4 0 /
\multiput{$\scriptstyle\bullet$} at 2 0  2.3 .45  2.6 .9  2.9  1.35
                                 0 3  .3 2.55  .6 2.1  .9 1.65  .6 3  1.2 3  1.8 3
                                 .9 2.55  1.5 2.55  1.2 2.1  
                                 -.3 3.45  .3 3.45  .9 3.45  1.5 3.45  2.1 3.45 
                                 -.3 3.9  .3 3.9  .9 3.9  1.5 3.9  2.1 3.9 
                                 -.9 3.45  -.6 3  -.3 2.55  0 2.1  .3 1.65  .6 1.2 /
\multiput{$\scriptstyle P_1^1$} at 2.3 -.2  -1.2 3.45 /
\multiput{$\scriptstyle P_1^3$} at 2.9 .7   -.6 2.45 /
\put{$\scriptstyle B_2^{3,1}$} at -.15 3.1
\put{$\scriptstyle B_2^{5,1}$} at .8 2.3
\put{$\scriptstyle B_2^{6,4}$} at 2.1 2.8
\put{$\scriptstyle B_2^{6,2}$} at 1.5 1.9
\put{$\scriptstyle P_0^1$} at -.55 3.6
\put{$\scriptstyle P_2^2$} at -.3 4.2
\put{$\scriptstyle P_0^5$} at 2.4 3.65
\put{$\scriptstyle P_2^6$} at 2.1 4.2
\endpicture
$$

\medskip
\begin{lem}\label{lemma-zeros}
Suppose $Y,Z\in\mathcal S_2$ have the same partition type $(\alpha,\beta,\gamma)$.
\begin{enumerate}
\item The Hom matrix $\delta H(Y,Z)$ has zero entry at each position corresponding
  to a module $P_1^1$, $P_0^m$, $P_2^m$ where $m\in \mathbb N$.
\item Along each diagonal in the Hom matrix, the entries eventually become constant:
  $$\lim_{m\to\infty}\delta H(Y,Z)_{B_2^{m,r}}\;=\; \delta H(Y,Z)_{P_1^m}$$
\end{enumerate}
\end{lem}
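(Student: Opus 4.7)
The plan is to compute the relevant hom dimensions $[X,Y]$ directly using the standard parametrization of morphisms in $\mathcal{S}_2$, and in each case to isolate which terms depend only on the partition type $(\alpha,\beta,\gamma)$ versus on the specific embedding $f$. Throughout, let $U = f(N_\alpha) \subseteq \soc_2 N_\beta$.

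For part (1), I treat the three families separately. For $X=P_0^\ell = (0, N_{(\ell)}, 0)$, a morphism to $Y=(N_\alpha,N_\beta,f)$ has no constraint from the subspace, so $[P_0^\ell,Y] = \dim\Hom_{k[T]}(N_{(\ell)}, N_\beta)$, which depends only on $\beta$. For $X=P_1^1 = (N_{(1)},N_{(1)},\id)$, the component $\psi_2$ is determined as $f\circ\psi_1$, giving $[P_1^1,Y] = \dim\soc N_\alpha = \ell(\alpha)$, which depends only on $\alpha$. The interesting case is $X=P_2^\ell$. A morphism is a pair $(a,b)\in N_\alpha \times \soc_\ell N_\beta$ with $f(a)=T^{\ell-2}b$; by injectivity of $f$, the element $a$ is determined by $b$ whenever $T^{\ell-2}b\in U$, so
\[
[P_2^\ell,Y] \;=\; \dim\bigl(T^{-(\ell-2)}(U)\cap\soc_\ell N_\beta\bigr) \;=\; \dim\soc_{\ell-2}N_\beta \;+\; \dim\bigl(U\cap T^{\ell-2}N_\beta\bigr),
\]
where the second equality uses the short exact sequence of kernels and images induced by $T^{\ell-2}$. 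The first term is a $\beta$-invariant. For the second, I would apply the quotient map $q\colon N_\beta\twoheadrightarrow N_\gamma$ to $T^{\ell-2}N_\beta$; since $q(T^{\ell-2}N_\beta) = T^{\ell-2}N_\gamma$, we get $\dim(U\cap T^{\ell-2}N_\beta) = \dim T^{\ell-2}N_\beta - \dim T^{\ell-2}N_\gamma$, again a partition invariant. Hence $[P_2^\ell, Y]$ depends only on $(\alpha,\beta,\gamma)$, proving $\delta H(Y,Z)_{P_2^\ell} = 0$.

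For part (2) (where I interpret the right-hand side as $\delta H(Y,Z)_{P_1^r}$, the $m$ on the right appearing to be a typo for $r$), I apply the same method to $B_2^{m,r}$. A morphism $(\psi_1,\psi_2)\colon B_2^{m,r}\to Y$ is a triple $(a,b,c)\in N_\alpha\times\soc_m N_\beta\times\soc_r N_\beta$ satisfying $f(a) = T^{m-2}b + T^{r-1}c$. For $m\geq \beta_1+2$ we have $\soc_m N_\beta = N_\beta$ and $T^{m-2}N_\beta = 0$, so $b$ becomes a free parameter in $N_\beta$ and the remaining constraint $f(a) = T^{r-1}c$ is precisely the condition defining a morphism from $P_1^r$ to $Y$. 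Counting yields
\[
[B_2^{m,r}, Y] \;=\; |\beta| + [P_1^r, Y] \qquad (m \geq \beta_1+2).
\]
The additive constant $|\beta|$ cancels when we take the difference $[B_2^{m,r},Z]-[B_2^{m,r},Y]$, so $\delta H(Y,Z)_{B_2^{m,r}} = \delta H(Y,Z)_{P_1^r}$ for all $m \geq \beta_1+2$, establishing the stabilization in the limit.

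The main obstacle is the $P_2^\ell$ case in part (1): a priori, the intersection $U \cap T^{\ell-2}N_\beta$ could very well depend on the embedding $f$, not just on the partition type. The key technical identity is the equality $\dim(U\cap T^{\ell-2}N_\beta) = \dim T^{\ell-2}N_\beta - \dim T^{\ell-2}N_\gamma$, obtained by applying the quotient $N_\beta\twoheadrightarrow N_\gamma$ to $T^{\ell-2}N_\beta$ and using that the quotient commutes with the action of $T^{\ell-2}$. Once this is established, both parts reduce to careful bookkeeping; the analogous splitting via $\soc_m$ and $T^{m-2}$ for $B_2^{m,r}$ in part (2) is then straightforward.
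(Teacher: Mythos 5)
Your proof is correct and reaches the same conclusions as the paper, but by a genuinely different route. The paper simply reads off from the explicit dimension table (imported from \cite[Lemma~4]{sch}) that $[P_1^1,Y]$, $[P_0^m,Y]$, $[P_2^m,Y]$, and the stabilized $[B_2^{m,r},Y]$ are expressible in $\alpha,\beta,\gamma$ alone; you instead compute the $\Hom$-spaces from scratch. The genuinely nontrivial point in your argument is the identity $\dim(U\cap T^{\ell-2}N_\beta)=\dim T^{\ell-2}N_\beta-\dim T^{\ell-2}N_\gamma$, obtained by restricting the quotient map $N_\beta\twoheadrightarrow N_\gamma$ to $T^{\ell-2}N_\beta$; this is the structural explanation for why the table entry for $P_2^\ell$ is a partition invariant, and is not spelled out in the paper. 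Your reading of the right-hand side of part~(2) as $\delta H(Y,Z)_{P_1^r}$ is the correct interpretation; it agrees with what the paper actually proves.

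Two small points are worth tightening. First, in the $P_2^\ell$ computation the identity $\dim(T^{-(\ell-2)}(U)\cap\soc_\ell N_\beta)=\dim\soc_{\ell-2}N_\beta+\dim(U\cap T^{\ell-2}N_\beta)$ requires that every $u\in U\cap T^{\ell-2}N_\beta$ be of the form $T^{\ell-2}b$ with $b\in\soc_\ell N_\beta$; this follows because $U\subseteq\soc_2 N_\beta$ (here you are using $\alpha_1\leq 2$), so $T^2u=0$ forces $T^\ell b=0$. Second, in part~(2) the residual constraint for $B_2^{m,r}\to Y$ has $a$ ranging over all of $N_\alpha$, whereas for $P_1^r\to Y$ it ranges over $\soc N_\alpha$; one should note that $f(a)=T^{r-1}c$ with $c\in\soc_r N_\beta$ forces $f(Ta)=T^rc=0$, hence $Ta=0$ by injectivity of $f$, so the two parameter spaces coincide. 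Your threshold $m\geq\beta_1+2$ is slightly more conservative than the paper's $m>\beta_1$, but that is harmless for the limit statement.
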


\begin{proof}
For the first statement we use the table in the previous section
to verify that the dimensions of
homomorphism spaces are determined by the partition type:
$$\begin{array}{rcl}
  [P_1^1,Y]_{\mathcal S} & = & \bar\alpha_1 \\[.1ex]
  [P_0^m,Y]_{\mathcal S} & = & \bar\beta_1+\cdots+\bar\beta_m \\[.1ex]
  [P_2^m,Y]_{\mathcal S} & = & \bar\alpha_1+\bar\alpha_2+\bar\gamma_1+\cdots \bar\gamma_{m-2}
\end{array}$$

For the second assertion, let $n=\beta_1$ be the nilpotency index of the operator
acting on $Y$.  By comparing the third and the fourth row in the table, we see
that for each $m>n$ the equality $[B_2^{m,r},Y]_{\mathcal S}=|\beta|+[P_1^r,Y]_{\mathcal S}$
holds where $|\beta|=\sum_i\beta_i$.
\end{proof}

\medskip
We determine how the matrices change for each operation on the arc diagram. 

%
%

\medskip\noindent{\bf (A)} 
Suppose $Z$ is obtained from $Y$ by a transformation
$$\begin{picture}(20,8)(0,3)
\put(0,4){\line(1,0){20}}
\multiput(3,3)(4,0)4{$\bullet$}
\put(10,4){\oval(4,4)[t]}
\put(10,4){\oval(12,12)[t]}
\put(4,1){\makebox[0mm]{$\scriptstyle m$}}
\put(8,1){\makebox[0mm]{$\scriptstyle n$}}
\put(12,1){\makebox[0mm]{$\scriptstyle r$}}
\put(16,1){\makebox[0mm]{$\scriptstyle s$}}
\end{picture}
\;
\leq_{\rm arc}
\;
\begin{picture}(20,8)(0,3)
\put(0,4){\line(1,0){20}}
\multiput(3,3)(4,0)4{$\bullet$}
\put(8,4){\oval(8,8)[t]}
\put(12,4){\oval(8,8)[t]}
\put(4,1){\makebox[0mm]{$\scriptstyle m$}}
\put(8,1){\makebox[0mm]{$\scriptstyle n$}}
\put(12,1){\makebox[0mm]{$\scriptstyle r$}}
\put(16,1){\makebox[0mm]{$\scriptstyle s$}}
\end{picture}
$$
where $m>n>r>s$ and $n>r+1$.
Recall that the two arcs in the diagram for $Y$ represent 
direct summands $B_2^{m,s}$ and $B_2^{n,r}$, which are replaced by
the summands $B_2^{m,r}$ and $B_2^{n,s}$ in $Z$ that give rise
to the corresponding arcs in the diagram for $Z$.

\smallskip
Thus, the multiplicity matrix is as follows.
$$
\beginpicture\setcoordinatesystem units <.6cm,.6cm>
\multiput{} at 0 0  4 4 /
\put{$\delta M(Y,Z):$} at -3 2
\plot -.3 3.9  3.5 3.9 /
\plot -.3 3.45  3.5 3.45 /
\plot 1.7 .45  0 3  3.5 3 /
\plot 2 0  4.3 3.45 /
\plot 1.4 0  -.9 3.45 /
\setdots<2pt>
\plot .84 .8  2.33 3 /
\plot 1.2 .3  3 3 /
\plot 2.5 .75  1 3 /
\plot 2.84 1.25  1.67 3 /
\put{$\ssize n$} at .6 .65
\put{$\ssize m$} at 1 .15
\put{$\ssize s$} at 2.7 .6
\put{$\ssize r$} at 3 1.1
\multiput{$\scriptstyle\bullet$} at 2 1.5  1.67 2  2.33 2  2 2.5 /
\multiput{$\ssize 1$} at 1.4 2  2.6 2 /
\multiput{$\ssize -1$} at 2 1.3  2 2.7 /
\endpicture
$$

Note that the marked points correspond to a short exact sequence
$$0\longrightarrow B_2^{n,s}\longrightarrow B_2^{m,s}\oplus B_2^{n,r}\longrightarrow B_2^{m,r}\longrightarrow 0$$
which serves as a witness for the implication 
$$Y\arcleq Z\;\Longrightarrow\;Y\extleq Z.$$

\medskip
Next we determine the Hom matrix $\delta H=\delta H(B_2^{m,s}\oplus B_2^{n,r},B_2^{n,s}\oplus  B_2^{m,r})$: 
By Lemma~\ref{lemma-zeros}, $\delta H_{P_0^\ell}=0=\delta H_{P_2^\ell}$.  
We compute using the table in Section~\ref{section-dim-hom}: $\delta H_{P_1^\ell}=0$ and
$$\delta H_{B_2^{\ell,t}}={\bf 1}\{n<\ell\leq m\,\text{and}\,s<t\leq r\}.$$
Thus, the only indecomposables $X\in\mathcal S_2$
for which $\delta H_X\neq 0$ are the $B_2^{\ell,t}$ where
$n<\ell\leq m$ and $s<t\leq r$.  For each such module $X$ we have $\delta H_X=1$.
They lie in the shaded region in the diagram below.

\smallskip
$$
\beginpicture\setcoordinatesystem units <.6cm,.6cm>
\multiput{} at 0 0  4 4 /
\put{$\delta H(Y,Z):$} at -3 2
\plot -.3 3.9  3.5 3.9 /
\plot -.3 3.45  3.5 3.45 /
\plot 1.7 .45  0 3  3.5 3 /
\plot 2 0  4.3 3.45 /
\plot 1.4 0  -.9 3.45 /
\setdots<2pt>
\plot .84 .8  2.33 3 /
\plot 1.2 .3  3 3 /
\plot 2.5 .75  1 3 /
\plot 2.84 1.25  1.67 3 /
\put{$\ssize n$} at .6 .65
\put{$\ssize m$} at 1 .15
\put{$\ssize s$} at 2.7 .6
\put{$\ssize r$} at 3 1.1
\multiput{$\scriptstyle\bullet$} at 2 1.5  1.67 2  2.33 2  2 2.5 /
\setshadegrid span <.3mm>
\vshade 1.87 2 2 <,z,,>
        2.2 1.5 2.5  <z,z,,>
        2.53 2 2  /
\endpicture
$$

%
%
\medskip\noindent{\bf (A')} 
In this case  $Z$ is obtained from $Y$ by an arc operation of type
$$\begin{picture}(20,8)(0,2)
\put(0,4){\line(1,0){20}}
\multiput(3,3)(4,0)4{$\bullet$}
\put(10,4){\oval(4,4)[t]}
\put(10,4){\oval(12,12)[t]}
\put(4,1){\makebox[0mm]{$\scriptstyle m$}}
\put(8,1){\makebox[0mm]{$\scriptstyle n$}}
\put(12,1){\makebox[0mm]{$\scriptstyle r$}}
\put(16,1){\makebox[0mm]{$\scriptstyle s$}}
\end{picture}
\;
\leq_{\rm arc}
\;
\begin{picture}(20,8)(0,2)
\put(0,4){\line(1,0){20}}
\multiput(3,3)(4,0)4{$\bullet$}
\put(8,4){\oval(8,8)[t]}
\put(12,4){\oval(8,8)[t]}
\put(4,1){\makebox[0mm]{$\scriptstyle m$}}
\put(8,1){\makebox[0mm]{$\scriptstyle n$}}
\put(12,1){\makebox[0mm]{$\scriptstyle r$}}
\put(16,1){\makebox[0mm]{$\scriptstyle s$}}
\end{picture}
$$
where $m>n>r>s$, as in {\bf (A)}, but now $n=r+1$.
Here the two arcs in the diagram for $Y$ represent 
direct summands $B_2^{m,s}$ and (corresponding to the arc from $n$ to $r$,)
$P_2^{n}$ and $P_0^r$; they are replaced by
the summands $B_2^{m,r}$ and $B_2^{n,s}$ in $Z$.

\medskip
In this case, the short exact sequence demonstrates the implication $Y\arcleq Z\Rightarrow Y\extleq Z$:
$$0\longrightarrow B_2^{n,s}\longrightarrow B_2^{m,s}\oplus P_2^n \oplus P_0^r\longrightarrow B_2^{m,r}\longrightarrow 0$$

\smallskip
We picture the corresponding multiplicity and Hom matrices. 
In view of the observation after the table in Section~\ref{section-dim-hom},
the computations for the Hom matrix in {\bf (A)} are still valid in this case:
$\delta H_X=1$ for $X=B_2^{\ell,t}$ where $n<\ell\leq m$, $s<t\leq r$.

$$
\beginpicture\setcoordinatesystem units <.6cm,.6cm>
\multiput{} at 0 0  4 4 /
\put{$\delta M(Y,Z):$} at -3 2
\plot -.3 3.9  3.5 3.9 /
\plot -.3 3.45  3.5 3.45 /
\plot 1.7 .45  0 3  3.5 3 /
\plot 2 0  4.3 3.45 /
\plot 1.4 0  -.9 3.45 /
\setdots<2pt>
\plot .54 1.3  1.67 3 /
\plot 1.2 .3  3 3 /
\plot 2.5 .75  1 3 /
\plot 2.33 3  3.16 1.75 /
\put{$\ssize n$} at .3 1.15
\put{$\ssize m$} at 1 .15
\put{$\ssize s$} at 2.7 .6
\put{$\ssize r$} at 3.4 1.6
\multiput{$\scriptstyle\bullet$} at 2 1.5  1.33 2.5  2.67 2.5  2 3.45  2 3.9 /
\multiput{$\ssize 1$} at 1.1 2.5  2.9 2.5 /
\multiput{$\ssize -1$} at 2 1.3  2.3 3.6  2.3 4.05 /
\endpicture
\qquad
\beginpicture\setcoordinatesystem units <.6cm,.6cm>
\multiput{} at 0 0  4 4 /
\put{$\delta H(Y,Z):$} at -3 2
\plot -.3 3.9  3.5 3.9 /
\plot -.3 3.45  3.5 3.45 /
\plot 1.7 .45  0 3  3.5 3 /
\plot 2 0  4.3 3.45 /
\plot 1.4 0  -.9 3.45 /
\setdots<2pt>
\plot .54 1.3  1.67 3 /
\plot 1.2 .3  3 3 /
\plot 2.5 .75  1 3 /
\plot 2.33 3  3.16 1.75 /
\put{$\ssize n$} at .3 1.15
\put{$\ssize m$} at 1 .15
\put{$\ssize s$} at 2.7 .6
\put{$\ssize r$} at 3.4 1.6
\multiput{$\scriptstyle\bullet$} at 2 1.5  1.33 2.5  2.67 2.5  2 3.45  2 3.9 /
\setshadegrid span <.3mm>
\vshade 1.53 2.5 2.5  <,z,,>
        1.97 1.85 3.15  <z,z,,>
        2.2 1.5 3.15   <z,z,,>
        2.43 1.85 3.15  <z,z,,>
        2.87 2.5 2.5 /
\endpicture
$$
%
%

\medskip\noindent{\bf (B)}
Next we deal with the transformation
$$\begin{picture}(16,8)(0,3)
\put(0,4){\line(1,0){16}}
\multiput(3,3)(4,0)3{$\bullet$}
\put(6,4){\oval(4,4)[t]}
\put(12,4){\line(0,1){10}}
\put(4,1){\makebox[0mm]{$\scriptstyle m$}}
\put(8,1){\makebox[0mm]{$\scriptstyle r$}}
\put(12,1){\makebox[0mm]{$\scriptstyle s$}}
\end{picture}
\;
\leq_{\rm arc}
\;
\begin{picture}(16,8)(0,3)
\put(0,4){\line(1,0){16}}
\multiput(3,3)(4,0)3{$\bullet$}
\put(8,4){\oval(8,8)[t]}
\put(8,4){\line(0,1){10}}
\put(4,1){\makebox[0mm]{$\scriptstyle m$}}
\put(8,1){\makebox[0mm]{$\scriptstyle r$}}
\put(12,1){\makebox[0mm]{$\scriptstyle s$}}
\end{picture}
$$
where $m>r>s$.  We consider the two cases where $m-1>r$ and where $m-1=r$ simultaneously
using the notation $B_2^{m,m-1}=P_2^m\oplus P_0^{m-1}$.  
Thus, $Y$ has summands $B_2^{m,r}$ and $P_1^s$
which are replaced in the transformation by summands $B_2^{m,s}$ and $P_1^r$
for $Z$. 

\medskip
The following short exact sequence demonstrates that $Y\arcleq Z$ implies $Y\extleq Z$:
$$0\longrightarrow B_2^{m,s}\longrightarrow B_2^{m,r}\oplus P_1^s\longrightarrow P_1^r\longrightarrow 0$$

\smallskip
Here are the corresponding multiplicity and Hom-matrices.
$$
\beginpicture\setcoordinatesystem units <.6cm,.6cm>
\multiput{} at 0 0  4 4 /
\put{$\delta M(Y,Z):$} at -3 2
\plot -.3 3.9  3.5 3.9 /
\plot -.3 3.45  3.5 3.45 /
\plot 1.7 .45  0 3  3.5 3 /
\plot 2 0  4.3 3.45 /
\plot 1.4 0  -.9 3.45 /
\setdots<2pt>
\plot .86 .8  2.33 3 /
\plot 2.5 .75  1 3 /
\plot 2.84 1.25  1.67 3 /
\put{$\ssize s$} at 0 1.55
\put{$\ssize r$} at .3 1.1
\put{$\ssize m$} at .6 .65
\put{$\ssize s$} at 2.7 .6
\put{$\ssize r$} at 3.0 1.1
\multiput{$\scriptstyle\bullet$} at 1.67 2  2 2.5  2.5 .75  2.84 1.25  .56 1.25  .26 1.7 /
\multiput{$\ssize 1$} at 1.4 2  2.84 1.55 .66 1.5 /
\multiput{$\ssize \text-1$} at 2.3 2.5  2.2 .75  .46 1.95  /
\endpicture
\qquad
\beginpicture\setcoordinatesystem units <.6cm,.6cm>
\multiput{} at 0 0  4 4 /
\put{$\delta H(Y,Z):$} at -3 2
\plot -.3 3.9  3.5 3.9 /
\plot -.3 3.45  3.5 3.45 /
\plot 1.7 .45  0 3  3.5 3 /
\plot 2 0  4.3 3.45 /
\plot 1.4 0  -.9 3.45 /
\setdots<2pt>
\plot .86 .8  2.33 3 /
\plot 2.5 .75  1 3 /
\plot 2.84 1.25  1.67 3 /
\put{$\ssize s$} at 0 1.55
\put{$\ssize r$} at .3 1.1
\put{$\ssize m$} at .6 .65
\put{$\ssize s$} at 2.7 .6
\put{$\ssize r$} at 3.0 1.1
\multiput{$\scriptstyle\bullet$} at 1.67 2  2 2.5  2.5 .75  2.84 1.25  .56 1.25  .26 1.7 /
\setshadegrid span <.3mm>
\vshade 1.87 2 2 <,z,,>
        2.2 1.5 2.5  <z,z,,>
        2.6  .9  1.9  <z,z,,>
        2.94 1.4 1.4 /
\vshade .36 1.55 1.55 <,z,,>
        .46 1.4 1.7 <z,z,,>
        .66 1.1 1.4 <z,z,,>
        .76 1.25 1.25 /
\endpicture
$$
Here $\delta H_X=1$ for modules $X$ of type $B_2^{\ell,t}$ where $m<\ell$ and $s<t\leq r$,
and for modules of type $P_1^t$ where $s<t\leq r$.
%
%

\medskip\noindent{\bf (C)} 
Here $Z$ is obtained from $Y$ by a transformation of type
$$\begin{picture}(20,8)(0,3)
\put(0,4){\line(1,0){20}}
\multiput(3,3)(4,0)4{$\bullet$}
\put(6,4){\oval(4,4)[t]}
\put(14,4){\oval(4,4)[t]}
\put(4,1){\makebox[0mm]{$\scriptstyle m$}}
\put(8,1){\makebox[0mm]{$\scriptstyle n$}}
\put(12,1){\makebox[0mm]{$\scriptstyle r$}}
\put(16,1){\makebox[0mm]{$\scriptstyle s$}}
\end{picture}
\;
\leq_{\rm arc}
\;
\begin{picture}(20,8)(0,3)
\put(0,4){\line(1,0){20}}
\multiput(3,3)(4,0)4{$\bullet$}
\put(8,4){\oval(8,8)[t]}
\put(12,4){\oval(8,8)[t]}
\put(4,1){\makebox[0mm]{$\scriptstyle m$}}
\put(8,1){\makebox[0mm]{$\scriptstyle n$}}
\put(12,1){\makebox[0mm]{$\scriptstyle r$}}
\put(16,1){\makebox[0mm]{$\scriptstyle s$}}
\end{picture}
$$
where $m>n>r>s$.
Recall that the two arcs in the diagram for $Y$ represent 
direct summands $B_2^{m,n}$ and $B_2^{r,s}$, which are replaced by
the summands $B_2^{m,r}$ and $B_2^{n,s}$ in $Z$ that give rise
to the corresponding arcs in the diagram for $Z$.
Note that we are really dealing with 4 cases depending on 
whether $m=n+1$ (in which case $B_2^{m,n}=P_2^m\oplus P_0^n$ 
is decomposable) or $m>n+1$, and whether $r=s+1$ (in which case
$B_2^{r,s}=P_2^r\oplus P_0^s$ is decomposable) or $r>s+1$.
We picture the matrices only for the case where $m>n+1$
and $r>s+1$; the modifications for the remaining cases are
as in {\bf (A)} and {\bf (A')}.

\smallskip
The multiplicity matrix is as follows.
$$
\beginpicture\setcoordinatesystem units <.6cm,.6cm>
\multiput{} at 0 0  4 4 /
\put{$\delta M(Y,Z):$} at -3 2
\plot -.3 3.9  3.5 3.9 /
\plot -.3 3.45  3.5 3.45 /
\plot 1.7 .45  0 3  3.5 3 /
\plot 2 0  4.3 3.45 /
\plot 1.4 0  -.9 3.45 /
\setdots<2pt>
\plot .43 1.45  1.47 3 /
\plot .77 .95  2.13 3 /
\plot 1.1 .45 2.8 3 /
\plot 2.5 .75  1 3 /
\plot 2.83 1.25  1.67 3 /
\plot 3.17 1.75  2.33 3 /
\put{$\ssize r$} at .1 1.3
\put{$\ssize n$} at .5 .8
\put{$\ssize m$} at .9 .3
\put{$\ssize s$} at 2.7 .6
\put{$\ssize r$} at 3 1.1
\put{$\ssize n$} at 3.3 1.6
\multiput{$\scriptstyle\bullet$} at 1.57 2.15  2.23 2.15  1.23 2.65  2.55 2.65 /
\multiput{$\ssize 1$} at 1.37 2.15  2.43 2.15 /
\multiput{$\ssize \text-1$} at .93 2.65  2.85 2.65 /
\endpicture
$$

The marked points correspond to a short exact sequence
$$0\longrightarrow B_2^{m,r}\longrightarrow B_2^{m,n}\oplus B_2^{r,s}\longrightarrow B_2^{n,s}\longrightarrow 0$$
which confirms the  implication $Y\arcleq Z\Rightarrow Y\extleq Z.$
For the Hom matrix $\delta H=\delta H(B_2^{m,n}\oplus B_2^{r,s},B_2^{n,s}\oplus  B_2^{m,r})$ we use
the table in Section~\ref{section-dim-hom}: 
$$\begin{array}{rcl} \delta H_{P_1^\ell} & = & {\bf 1}\{r<\ell\leq n\}\\
                     \delta H_{B_2^{\ell,t}} & = & {\bf 1}\{r<\ell\leq n\,\text{and}\, t\leq s\}+{\bf 1}\{m<\ell\,\text{and}\, r<t\leq n\}.
\end{array}$$

$$
\beginpicture\setcoordinatesystem units <.6cm,.6cm>
\multiput{} at 0 0  4 4 /
\put{$\delta H(Y,Z):$} at -3 2
\plot -.3 3.9  3.5 3.9 /
\plot -.3 3.45  3.5 3.45 /
\plot 1.7 .45  0 3  3.5 3 /
\plot 2 0  4.3 3.45 /
\plot 1.4 0  -.9 3.45 /
\setdots<2pt>
\plot .43 1.45  1.47 3 /
\plot .77 .95  2.13 3 /
\plot 1.1 .45 2.8 3 /
\plot 2.5 .75  1 3 /
\plot 2.83 1.25  1.67 3 /
\plot 3.17 1.75  2.33 3 /
\put{$\ssize r$} at .1 1.3
\put{$\ssize n$} at .5 .8
\put{$\ssize m$} at .9 .3
\put{$\ssize s$} at 2.7 .6
\put{$\ssize r$} at 3 1.1
\put{$\ssize n$} at 3.3 1.6
\multiput{$\scriptstyle\bullet$} at 1.57 2.15  2.23 2.15  1.23 2.65  2.55 2.65 /
\setshadegrid span <.3mm>
\vshade .53 1.3 1.3 <,z,,>
        .87 .8  1.8  <z,z,,>
        1.43  1.65 2.65 <z,z,,>
        1.77 2.15 2.15 /
\vshade 2.43 2.15 2.15 <,z,,>
        2.75 1.65 2.65 <z,z,,>
        2.93 1.4 2.4  <z,z,,>
        3.27 1.9 1.9  /
\endpicture
$$

%
%
\medskip\noindent{\bf (D)}
Finally, we deal with the transformation
$$\begin{picture}(16,8)(0,3)
\put(0,4){\line(1,0){16}}
\multiput(3,3)(4,0)3{$\bullet$}
\put(10,4){\oval(4,4)[t]}
\put(4,4){\line(0,1){10}}
\put(4,1){\makebox[0mm]{$\scriptstyle m$}}
\put(8,1){\makebox[0mm]{$\scriptstyle r$}}
\put(12,1){\makebox[0mm]{$\scriptstyle s$}}
\end{picture}
\;
\leq_{\rm arc}
\;
\begin{picture}(16,8)(0,3)
\put(0,4){\line(1,0){16}}
\multiput(3,3)(4,0)3{$\bullet$}
\put(8,4){\oval(8,8)[t]}
\put(8,4){\line(0,1){10}}
\put(4,1){\makebox[0mm]{$\scriptstyle m$}}
\put(8,1){\makebox[0mm]{$\scriptstyle r$}}
\put(12,1){\makebox[0mm]{$\scriptstyle s$}}
\end{picture}
$$
where $m>r>s$.  Thus, $Y$ has summands $B_2^{r,s}$ and $P_1^m$
which are replaced in the transformation by summands $B_2^{m,s}$ and $P_1^r$
for $Z$. 
The following short exact sequence demonstrates that $Y\arcleq Z$ implies $Y\extleq Z$:
$$0\longrightarrow P_1^r\longrightarrow P_1^m\oplus B_2^{r,s}\longrightarrow B_2^{m,s}\longrightarrow 0$$

\smallskip
Here are the corresponding multiplicity and Hom-matrices.
$$
\beginpicture\setcoordinatesystem units <.6cm,.6cm>
\multiput{} at 0 0  4 4 /
\put{$\delta M(Y,Z):$} at -3 2
\plot -.3 3.9  3.5 3.9 /
\plot -.3 3.45  3.5 3.45 /
\plot 1.7 .45  0 3  3.5 3 /
\plot 2 0  4.3 3.45 /
\plot 1.4 0  -.9 3.45 /
\setdots<2pt>
\plot .43 1.45  1.47 3 /
\plot .77 .95  2.13 3 /
\plot 2.5 .75  1 3 /
\plot 2.83 1.25  1.67 3 /
\plot 3.17 1.75  2.33 3 /
\put{$\ssize r$} at .1 1.3
\put{$\ssize m$} at .5 .8
\put{$\ssize s$} at 2.7 .6
\put{$\ssize r$} at 3 1.1
\put{$\ssize m$} at 3.3 1.6
\multiput{$\scriptstyle\bullet$} at 1.57 2.15  1.23 2.65  .43 1.45  .77 .95  2.83 1.25  3.17 1.75 /
\multiput{$\ssize 1$} at 1.77 2.15  .43 1.75  2.63 1.25  /
\multiput{$\ssize \text-1$} at .93 2.65  1.07 .95  2.87 1.75 /
\endpicture
\qquad
\beginpicture\setcoordinatesystem units <.6cm,.6cm>
\multiput{} at 0 0  4 4 /
\put{$\delta H(Y,Z):$} at -3 2
\plot -.3 3.9  3.5 3.9 /
\plot -.3 3.45  3.5 3.45 /
\plot 1.7 .45  0 3  3.5 3 /
\plot 2 0  4.3 3.45 /
\plot 1.4 0  -.9 3.45 /
\setdots<2pt>
\plot .43 1.45  1.47 3 /
\plot .77 .95  2.13 3 /
\plot 2.5 .75  1 3 /
\plot 2.83 1.25  1.67 3 /
\plot 3.17 1.75  2.33 3 /
\put{$\ssize r$} at .1 1.3
\put{$\ssize m$} at .5 .8
\put{$\ssize s$} at 2.7 .6
\put{$\ssize r$} at 3 1.1
\put{$\ssize m$} at 3.3 1.6
\multiput{$\scriptstyle\bullet$} at 1.57 2.15  1.23 2.65  .43 1.45  .77 .95  2.83 1.25  3.17 1.75 /
\setshadegrid span <.3mm>
\vshade .53 1.3 1.3 <,z,,>
        .87 .8  1.8  <z,z,,>
        1.43  1.65 2.65 <z,z,,>
        1.77 2.15 2.15 /
\vshade 2.83 1.55 1.55 <,z,,>
        2.93 1.4 1.7   <z,z,,>
        3.17 1.75 2.05 <z,z,,>
        3.27 1.9 1.9 /
\endpicture
$$

We proved the following fact.

\begin{lem}\label{lem-tab-ext}
  Let $k$ be an~arbitrary field and let
  $Y,Z\in\mathcal{S}_2(k)$ have the same partition type 
  $(\alpha,\beta,\gamma)$.
  If $Y\leq_{\rm arc} Z$, then $Y\leq_{\rm ext} Z$.\qed
\end{lem}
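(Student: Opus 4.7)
The plan is to deduce the lemma from the explicit case analysis of Section~\ref{section-operations} together with the transitivity of $\leq_{\rm ext}$.

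First, I would unfold the definition of $\leq_{\rm arc}$ to obtain, from the hypothesis $Y\leq_{\rm arc} Z$, a finite sequence of objects $Z=Z_0,Z_1,\ldots,Z_s=Y$ in $\mathcal S_2(k)$ with the property that each $Z_{i+1}$ is obtained from $Z_i$ by a single arc move of type \textbf{(A)}, \textbf{(A$'$)}, \textbf{(B)}, \textbf{(C)}, or \textbf{(D)}. The relation $\leq_{\rm ext}$ is transitive by the very shape of its definition, since one simply concatenates the defining chains of short exact sequences, so it suffices to establish $Z_{i+1}\leq_{\rm ext} Z_i$ for a single arc move.

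Fix such a step. In each of the five cases, Section~\ref{section-operations} already exhibits a short exact sequence
$$0\longrightarrow U\longrightarrow M\longrightarrow V\longrightarrow 0$$
in $\mathcal S_2(k)$ whose middle term $M$ is the direct sum of those indecomposable summands of $Z_{i+1}$ that are altered by the move, and whose endpoints satisfy that $U\oplus V$ equals the corresponding direct sum of altered summands of $Z_i$. Writing $Z_{i+1}\cong M\oplus R$ and $Z_i\cong(U\oplus V)\oplus R$ for the common unaltered summand $R$, I would take the direct sum of the displayed sequence with the identity on $R$ to produce a short exact sequence $0\to U\oplus R\to Z_{i+1}\to V\to 0$ whose sub and quotient sum to $Z_i$. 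This is a one-step witness of $Z_{i+1}\leq_{\rm ext} Z_i$ in the sense of Lemma~\ref{lem-ext-order}. Concatenating these one-step witnesses along the chain $Z_0,\ldots,Z_s$ yields the required sequence demonstrating $Y\leq_{\rm ext} Z$.

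No serious obstacle is anticipated: the bulk of the work, namely the construction of the five witness sequences, has already been carried out. The only point of care is the uniform handling of length-one arcs, where a summand $B_2^{\ell,\ell-1}$ must be interpreted as $P_2^\ell\oplus P_0^{\ell-1}$ according to the convention stated at the end of Section~\ref{section-dim-hom}; this is absorbed by the explicit subcase treatments accompanying moves \textbf{(A$'$)}, \textbf{(C)}, and \textbf{(D)}, so no additional argument is required.
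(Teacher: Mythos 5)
Your proposal is correct and follows the same route the paper intends: the paper presents Lemma~\ref{lem-tab-ext} immediately after the case analysis of Section~\ref{section-operations} with the remark ``We proved the following fact,'' relying on the five exhibited short exact sequences as the one-step witnesses, exactly as you spell out. Your elaboration (factoring through single moves, taking the direct sum with the identity on the unaltered summand $R$, and concatenating via transitivity of $\leq_{\rm ext}$) is a faithful unpacking of that implicit argument, and your note about the convention $B_2^{\ell,\ell-1}=P_2^\ell\oplus P_0^{\ell-1}$ correctly identifies the one bookkeeping point; the only trivial inaccuracy is that the paper's explicit subcase discussion covers \textbf{(A$'$)} and \textbf{(C)} but leaves the degenerate subcase of \textbf{(D)} to the reader, which the convention handles anyway.
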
 

Note that the Hom matrix determines the multiplicity matrix uniquely.
Namely, let $A$ be a noninjective indecomposable object with
Auslander-Reiten sequence $0\to A\to \bigoplus B_i\to C\to 0$ 
in $\mathcal S_2^n$.  Then the multiplicity of $A$ as a direct summand
of $Y$ is given by the contravariant defect
$\mu_A(Y)=[A,Y]+[C,Y]-\sum_i [B_i,Y]$.

\smallskip
The following consequence, which is technical but easy to show, will be used 
in the proof of Proposition~\ref{proposition-key-lemma}.

  \begin{lem}\label{lem-hom-yields-mult}
    Suppose that both $Y,Z\in\mathcal S_2^n$ have partition type $(\alpha,\beta,\gamma)$
    with $\alpha_1\leq 2$. 
    \begin{enumerate}
    \item
      The nonzero part of the Hom matrix $\delta H(Y,Z)$ 
      is contained in the union of the $\tau$-orbits for $X=B_2^{3,1},\ldots, B_2^{n,1}$;
      they form a Moebius band, i.e.\ a quiver of type $\mathbb Z \mathbb A_{n-2}$ 
      with suitable identifications.
    \item
      For each non-injective $A\in\ind\,\mathcal S_2^n$, the entry $\delta M_A=\delta M(Y,Z)_A$ 
      in the multiplicity matrix 
      can be read off from the the restriction of the Hom matrix to the Moebius band
      by a formula of type
      $$\delta M_A=\beta_A+\beta_C-\sum_i\beta_{B_i}$$
      where $0\to A\to \bigoplus_i B_i\to C\to 0$ is the Auslander-Reiten sequence starting at $A$ and 
      $$\beta_X=\left\{\begin{array}{ll}\delta H(Y,Z)_X & \text{if $X$ is in the Moebius band}\\
      0               & \text{otherwise.}\end{array}\right.$$
    \end{enumerate}
  \end{lem}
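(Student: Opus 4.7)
The plan is to reduce this lemma to Lemma~\ref{lemma-zeros}(1) combined with the contravariant defect formula recorded in the remark just above the lemma statement. The two parts then essentially separate: part (1) is a bookkeeping statement about the shape of the Auslander-Reiten quiver of $\mathcal S_2^n$, and part (2) is a formal consequence of (1) plus the defect formula.

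For part (1), I would first partition the indecomposables of $\mathcal S_2^n$ into two groups, reading off the AR quiver sketched in Section~\ref{section-dim-hom}: the ``boundary'' modules $P_1^1$, $P_0^m$, $P_2^m$ for $1\leq m\leq n$, and the ``band'' modules, namely the $B_2^{m,r}$ (with $3\leq m\leq n$, $1\leq r\leq m-2$) together with $P_1^m$ for $2\leq m\leq n$. The task is to identify the latter group with the union of the $\tau$-orbits of $B_2^{3,1},\ldots,B_2^{n,1}$, and to check that, once the two columns of $P_1^m$'s at the left and right of the drawing are glued, this union forms a quiver of type $\mathbb Z\mathbb A_{n-2}$ with a Moebius-style identification. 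The key ingredient here is the degenerate AR-sequence $0\to B_2^{n,r-1}\to B_2^{n,r}\oplus P_1^{r-1}\to P_1^r\to 0$ recalled at the start of Section~\ref{section-dim-hom}; it is precisely this sequence that ties the $P_1^r$'s into the band and produces the Moebius twist. Given this description, the claim of part~(1) follows immediately from Lemma~\ref{lemma-zeros}(1), which states that $\delta H(Y,Z)_X=0$ for $X=P_1^1$, $P_0^m$ or $P_2^m$, i.e.\ on exactly the boundary.

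For part (2), I would apply the contravariant defect formula
\[\mu_A(W) \;=\; [A,W] + [C,W] - \sum_i [B_i,W]\]
(valid for any non-injective indecomposable $A\in\ind\mathcal S_2^n$ with AR sequence $0\to A\to\bigoplus_i B_i\to C\to 0$, and any $W\in\mathcal S_2^n$) to both $W=Y$ and $W=Z$. Subtracting the two yields
\[\delta M_A \;=\; \delta H_A + \delta H_C - \sum_i \delta H_{B_i}.\]
By part~(1), $\delta H_X$ vanishes for every $X$ not on the Moebius band, while on the band $\delta H_X$ coincides with $\beta_X$ by definition; so $\delta H_X=\beta_X$ for every indecomposable $X$. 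Substituting gives exactly the claimed formula $\delta M_A=\beta_A+\beta_C-\sum_i\beta_{B_i}$.

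The only real obstacle is the combinatorial verification in part~(1) that the $\tau$-orbits of the modules $B_2^{m,1}$ indeed cover the whole band and close up into a Moebius band of type $\mathbb Z\mathbb A_{n-2}$. Once this is in hand, the rest of the argument is entirely formal; in particular the defect formula used in part~(2) has already been recalled in the paper and does not require a separate proof.
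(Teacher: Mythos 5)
Your proposal is correct and follows essentially the same route as the paper's own (very terse) proof, which simply invokes Lemma~\ref{lemma-zeros} for part (1) and the contravariant defect formula for part (2); you have just made explicit the bookkeeping about boundary versus band modules and the subtraction of the defect formula applied to $Y$ and $Z$. The one step you flag as an "obstacle" --- that the $\tau$-orbits of the $B_2^{m,1}$ close up into a $\mathbb Z\mathbb A_{n-2}$ Moebius band --- is also left to the reader (via the picture and the reference to \cite{s-b}) in the paper, so the level of detail matches.
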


  \begin{proof}
    The first statement follows from Lemma~\ref{lemma-zeros}, the second 
    from the contravariant defect formula above.
  \end{proof}

\subsection{An example}\label{section-example}

In this section we present the example from the introduction to
Chapter~\ref{chapter-hom-tab}  in detail.  The arc diagrams
$$
\begin{picture}(32,15)(0,3)
\put(-8,8){$Z:$}
\put(0,4){\line(1,0){32}}
\multiput(3,3)(4,0)7{$\bullet$}
\put(14,4){\oval(20,20)[t]}
\put(20,4){\oval(16,16)[t]}
\put(14,4){\oval(12,12)[t]}
\put(16,4){\line(0,1){15}}
\put(4,1){\makebox[0mm]{$\scriptstyle 7$}}
\put(8,1){\makebox[0mm]{$\scriptstyle 6$}}
\put(12,1){\makebox[0mm]{$\scriptstyle 5$}}
\put(16,1){\makebox[0mm]{$\scriptstyle 4$}}
\put(20,1){\makebox[0mm]{$\scriptstyle 3$}}
\put(24,1){\makebox[0mm]{$\scriptstyle 2$}}
\put(28,1){\makebox[0mm]{$\scriptstyle 1$}}
\end{picture}
\qquad\qquad\qquad
\begin{picture}(32,15)(0,3)
\put(-8,8){$Y:$}
\put(0,4){\line(1,0){32}}
\multiput(3,3)(4,0)7{$\bullet$}
\put(12,4){\oval(16,16)[t]}
\put(16,4){\oval(16,16)[t]}
\put(14,4){\oval(4,4)[t]}
\put(28,4){\line(0,1){15}}
\put(4,1){\makebox[0mm]{$\scriptstyle 7$}}
\put(8,1){\makebox[0mm]{$\scriptstyle 6$}}
\put(12,1){\makebox[0mm]{$\scriptstyle 5$}}
\put(16,1){\makebox[0mm]{$\scriptstyle 4$}}
\put(20,1){\makebox[0mm]{$\scriptstyle 3$}}
\put(24,1){\makebox[0mm]{$\scriptstyle 2$}}
\put(28,1){\makebox[0mm]{$\scriptstyle 1$}}
\end{picture}
$$
represent the modules 
$Y=B_2^{7,3}\oplus B_2^{6,2}\oplus P_2^5\oplus P_0^4\oplus P_1^1$
and $Z=B_2^{7,2}\oplus B_2^{6,3}\oplus B_2^{5,1}\oplus P_1^4$.

\medskip
We first compute the Hom-matrix to verify that $Y\leq_{\rm hom}Z$.
In this matrix, the entries on the first diagonal represent the numbers
$\delta H_{B_2^{3,1}},\delta H_{B_2^{4,1}},\ldots$. 
By Lemma~\ref{lemma-zeros}, there are also zeros along the
two top rows, which we indicate by solid lines.

$$
\beginpicture\setcoordinatesystem units <.4cm,.4cm>
\multiput{} at 0 0  4 4 /
\put{$\delta H(Y,Z):$} at -3 2
\plot -.3 3.9  6.5 3.9 /
\plot -.3 4.1  6.5 4.1  /
\multiput{$\scriptstyle 0$} at 0 3.5  1 3.5  2 3.5  4 3.5  5 3.5  
                               .5 3  1.5 3  4.5 3  6.5 3 
                               1 2.5  3 2.5  6 2.5
                               1.5 2  5.5 2
                               2 1.5 
                               2.5 1
                               3.5 0 /
\multiput{$\scriptstyle 1$} at 3 3.5  
                               2.5 3  3.5 3
                               2 2.5  4 2.5
                               2.5 2  3.5 2
                               3 1.5  5 1.5
                                      4.5 1
                                      4 .5 /
\multiput{$\cdot$} at        2.8 .7  3 .5  3.2 .3
                             3.3 1.2  3.5 1  3.7 .8 
                             3.8 1.7  4 1.5  4.2 1.3
                             4.3 2.2  4.5 2  4.7 1.8 
                             4.8 2.7  5 2.5  5.2 2.3
                             6.8 3.3  7 3.5  7.2 3.7 /
\endpicture
$$

We ask:  Is it true that $Y\leq_{\rm arc}Z$?  If so, which operations 
transform the arc diagram of $Z$ into the arc diagram of $Y$?

\smallskip
First we consider again 
the single operation of type {\bf (B')} on $Z$, replacing the arc from 5 to 1, 
i.e.\ the bipicket $B_2^{5,1}$ and the
pole at 4, i.e.\ the picket $P_1^4$ by the arc given by the pickets 
$P_2^5$, $P_0^4$ and the pole at 1 given by the picket $P_1^1$, to obtain the
module $\tilde Z$ with the following arc diagram.

$$
\begin{picture}(32,15)(0,3)
\put(-8,8){$\tilde Z:$}
\put(0,4){\line(1,0){32}}
\multiput(3,3)(4,0)7{$\bullet$}
\put(14,4){\oval(20,20)[t]}
\put(14,4){\oval(12,12)[t]}
\put(14,4){\oval(4,4)[t]}
\put(28,4){\line(0,1){15}}
\put(4,1){\makebox[0mm]{$\scriptstyle 7$}}
\put(8,1){\makebox[0mm]{$\scriptstyle 6$}}
\put(12,1){\makebox[0mm]{$\scriptstyle 5$}}
\put(16,1){\makebox[0mm]{$\scriptstyle 4$}}
\put(20,1){\makebox[0mm]{$\scriptstyle 3$}}
\put(24,1){\makebox[0mm]{$\scriptstyle 2$}}
\put(28,1){\makebox[0mm]{$\scriptstyle 1$}}
\end{picture}
$$

This single misstep yields $\tilde Z\leq_{\rm hom}Y$ since, as 
we have seen in Section~\ref{section-operations} part (B'),
all entries in the big parallelogram in the Hom-matrix have been reduced by 1.

$$
\beginpicture\setcoordinatesystem units <.4cm,.4cm>
\multiput{} at 0 0  4 4 /
\put{$\delta H(Y,\tilde Z):$} at -3 2
\plot -.3 3.9  6.5 3.9 /
\plot -.3 4.1  6.5 4.1  /
\multiput{$\scriptstyle 0$} at 0 3.5  1 3.5  2 3.5  4 3.5  5 3.5  
                               .5 3  1.5 3  4.5 3  6.5 3 
                               1 2.5        6 2.5
                               1.5 2  5.5 2
                               2 1.5 
                               2.5 1
                               3.5 0 /
\multiput{$\scriptstyle 0$} at 3 3.5  
                               2.5 3  3.5 3
                               2 2.5  4 2.5
                               2.5 2  3.5 2
                               3 1.5  5 1.5
                                      4.5 1
                                      4 .5 /
\put{$\scriptstyle -1$} at 3 2.5
\setdots<2pt>
\plot 1.5 2.5  3 4  5.5 1.5  4 0  1.5 2.5 /  
\multiput{$\cdot$} at        2.8 .7  3 .5  3.2 .3
                             3.3 1.2  3.5 1  3.7 .8 
                             3.8 1.7  4 1.5  4.2 1.3
                             4.3 2.2  4.5 2  4.7 1.8 
                             4.8 2.7  5 2.5  5.2 2.3
                             6.8 3.3  7 3.5  7.2 3.7 /
\endpicture
$$

So we start over.  Now we focus on the smaller parallelogram in the Hom-matrix,
as indicated.  

$$
\beginpicture\setcoordinatesystem units <.4cm,.4cm>
\multiput{} at 0 0  4 4 /
\put{$\delta H(Y,Z):$} at -3 2
\plot -.3 3.9  6.5 3.9 /
\plot -.3 4.1  6.5 4.1  /
\multiput{$\scriptstyle 0$} at 0 3.5  1 3.5  2 3.5  4 3.5  5 3.5  
                               .5 3  1.5 3  4.5 3  6.5 3 
                               1 2.5  3 2.5  6 2.5
                               1.5 2  5.5 2
                               2 1.5 
                               2.5 1
                               3.5 0 /
\multiput{$\scriptstyle 1$} at 3 3.5  
                               2.5 3  3.5 3
                               2 2.5  4 2.5
                               2.5 2  3.5 2
                               3 1.5  5 1.5
                                      4.5 1
                                      4 .5 /
\multiput{$\cdot$} at        2.8 .7  3 .5  3.2 .3
                             3.3 1.2  3.5 1  3.7 .8 
                             3.8 1.7  4 1.5  4.2 1.3
                             4.3 2.2  4.5 2  4.7 1.8 
                             4.8 2.7  5 2.5  5.2 2.3
                             6.8 3.3  7 3.5  7.2 3.7 /
\setdots<2pt>
\plot 2.5 3.5  1.5 2.5  2 2  3 3  2.5 3.5 /
\endpicture
$$

There are two reasons for considering this parallelogram:
(1) On the left of the left corner of the parallelogram, and in the right corner, 
there are positive entries in the multiplicity
diagram, see Section~\ref{section-operations}.  Those entries 
indicate the arcs and poles involved in the operation which 
produces $Z'$ from $Z$.

\smallskip
(2) Unlike the big parallelogram, there are only positive
entries in the smaller parallelogram.  This will make sure that
$Y\leq_{\rm hom} Z'$ holds. 

\medskip
The corresponding operation of type {\bf (A)} 
replaces the bipickets $B_2^{5,1}$ and $B_2^{6,3}$ in $Z$ (representing arcs 
from 5 to 1 and from 6 to 3, respectively)
by bipickets $B_2^{6,1}$ and $B_2^{5,3}$ in $Z'$, so $Z'$ is given by
the following arc diagram.  We also indicate the new Hom-matrix.

$$
\begin{picture}(32,15)(0,3)
\put(-8,8){$Z':$}
\put(0,4){\line(1,0){32}}
\multiput(3,3)(4,0)7{$\bullet$}
\put(14,4){\oval(20,20)[t]}
\put(18,6){\oval(20,20)[t]}
\put(16,4){\oval(8,8)[t]}
\put(16,4){\line(0,1){15}}
\put(8,4){\line(0,1){2}}
\put(28,4){\line(0,1){2}}
\put(4,1){\makebox[0mm]{$\scriptstyle 7$}}
\put(8,1){\makebox[0mm]{$\scriptstyle 6$}}
\put(12,1){\makebox[0mm]{$\scriptstyle 5$}}
\put(16,1){\makebox[0mm]{$\scriptstyle 4$}}
\put(20,1){\makebox[0mm]{$\scriptstyle 3$}}
\put(24,1){\makebox[0mm]{$\scriptstyle 2$}}
\put(28,1){\makebox[0mm]{$\scriptstyle 1$}}
\end{picture}
\qquad\qquad
\beginpicture\setcoordinatesystem units <.4cm,.4cm>
\multiput{} at 0 0  4 4 /
\put{$\delta H(Y,Z'):$} at -3 2
\plot -.3 3.9  6.5 3.9 /
\plot -.3 4.1  6.5 4.1  /
\multiput{$\scriptstyle 0$} at 0 3.5  1 3.5  2 3.5  4 3.5  5 3.5  
                                  .5 3  1.5 3  4.5 3  6.5 3         2.5 3 
                                       1 2.5  3 2.5  6 2.5          2 2.5
                                           1.5 2  5.5 2
                                             2 1.5 
                                                 2.5 1
                                                    3.5 0 /
\multiput{$\scriptstyle 1$} at 3 3.5  
                                      3.5 3
                                      4 2.5
                               2.5 2  3.5 2
                               3 1.5  5 1.5
                                      4.5 1
                                      4 .5 /
\multiput{$\cdot$} at        2.8 .7  3 .5  3.2 .3
                             3.3 1.2  3.5 1  3.7 .8 
                             3.8 1.7  4 1.5  4.2 1.3
                             4.3 2.2  4.5 2  4.7 1.8 
                             4.8 2.7  5 2.5  5.2 2.3
                             6.8 3.3  7 3.5  7.2 3.7 /
\setdots<2pt>
\plot 2 2  2.5 2.5  3 2  2.5 1.5  2 2 /
\endpicture
$$

We pick a new parallelogram satisfying conditions (1) and (2)
(alternatively, we could have chosen the parallelogram 
given by the upper diagonal in the region marked by the 1's)
and perform the operation indicated:  Replace in $Z'$ 
the bipickets $B_2^{6,1}$ and $B_2^{7,2}$ by bipickets $B_2^{6,2}$ and $B_2^{7,1}$ 
in $Z''$.  Arc diagram and Hom-matrix are as follows.

$$
\begin{picture}(32,15)(0,3)
\put(-8,8){$Z'':$}
\put(0,4){\line(1,0){32}}
\multiput(3,3)(4,0)7{$\bullet$}
\put(16,4){\oval(24,24)[t]}
\put(16,4){\oval(16,16)[t]}
\put(16,4){\oval(8,8)[t]}
\put(16,4){\line(0,1){15}}
\put(4,1){\makebox[0mm]{$\scriptstyle 7$}}
\put(8,1){\makebox[0mm]{$\scriptstyle 6$}}
\put(12,1){\makebox[0mm]{$\scriptstyle 5$}}
\put(16,1){\makebox[0mm]{$\scriptstyle 4$}}
\put(20,1){\makebox[0mm]{$\scriptstyle 3$}}
\put(24,1){\makebox[0mm]{$\scriptstyle 2$}}
\put(28,1){\makebox[0mm]{$\scriptstyle 1$}}
\end{picture}
\qquad\qquad
\beginpicture\setcoordinatesystem units <.4cm,.4cm>
\multiput{} at 0 0  4 4 /
\put{$\delta H(Y,Z''):$} at -3 2
\plot -.3 3.9  6.5 3.9 /
\plot -.3 4.1  6.5 4.1  /
\multiput{$\scriptstyle 0$} at 0 3.5  1 3.5  2 3.5  4 3.5  5 3.5  
                                  .5 3  1.5 3  4.5 3  6.5 3         2.5 3 
                                       1 2.5  3 2.5  6 2.5          2 2.5
                                           1.5 2  5.5 2             2.5 2
                                             2 1.5 
                                                 2.5 1
                                                    3.5 0 /
\multiput{$\scriptstyle 1$} at 3 3.5  
                                      3.5 3
                                      4 2.5
                                      3.5 2
                               3 1.5  5 1.5
                                      4.5 1
                                      4 .5 /
\multiput{$\cdot$} at        2.8 .7  3 .5  3.2 .3
                             3.3 1.2  3.5 1  3.7 .8 
                             3.8 1.7  4 1.5  4.2 1.3
                             4.3 2.2  4.5 2  4.7 1.8 
                             4.8 2.7  5 2.5  5.2 2.3
                             6.8 3.3  7 3.5  7.2 3.7 /
\setdots<2pt>
\plot 3 4  5.5 1.5  5 1  2.5 3.5  3 4 /
\endpicture
$$

The parallelogram suggests an operation of type {\bf (B')}, namly 
to replace the bipicket $B_2^{5,3}$ and the picket $P_1^4$
in $Z''$ by pickets $P_2^5$, $P_0^4$ and $P_1^3$ in $Z'''$
(note that the first two pickets represent an arc from 5 to 4,
the last is a pole at 3).

$$
\begin{picture}(32,15)(0,3)
\put(-8,8){$Z''':$}
\put(0,4){\line(1,0){32}}
\multiput(3,3)(4,0)7{$\bullet$}
\put(16,4){\oval(24,24)[t]}
\put(16,4){\oval(16,16)[t]}
\put(14,4){\oval(4,4)[t]}
\put(20,4){\line(0,1){15}}
\put(4,1){\makebox[0mm]{$\scriptstyle 7$}}
\put(8,1){\makebox[0mm]{$\scriptstyle 6$}}
\put(12,1){\makebox[0mm]{$\scriptstyle 5$}}
\put(16,1){\makebox[0mm]{$\scriptstyle 4$}}
\put(20,1){\makebox[0mm]{$\scriptstyle 3$}}
\put(24,1){\makebox[0mm]{$\scriptstyle 2$}}
\put(28,1){\makebox[0mm]{$\scriptstyle 1$}}
\end{picture}
\qquad\qquad
\beginpicture\setcoordinatesystem units <.4cm,.4cm>
\multiput{} at 0 0  4 4 /
\put{$\delta H(Y,Z'''):$} at -3 2
\plot -.3 3.9  6.5 3.9 /
\plot -.3 4.1  6.5 4.1  /
\multiput{$\scriptstyle 0$} at 0 3.5  1 3.5  2 3.5  4 3.5  5 3.5      3 3.5
                                  .5 3  1.5 3  4.5 3  6.5 3    2.5 3  3.5 3
                                       1 2.5  3 2.5  6 2.5     2 2.5  4 2.5
                                           1.5 2  5.5 2        2.5 2 
                                             2 1.5                    5 1.5
                                                 2.5 1
                                                    3.5 0 /
\multiput{$\scriptstyle 1$} at %
                                      3.5 2
                               3 1.5  
                                      4.5 1
                                      4 .5 /
\multiput{$\cdot$} at        2.8 .7  3 .5  3.2 .3
                             3.3 1.2  3.5 1  3.7 .8 
                             3.8 1.7  4 1.5  4.2 1.3
                             4.3 2.2  4.5 2  4.7 1.8 
                             4.8 2.7  5 2.5  5.2 2.3
                             6.8 3.3  7 3.5  7.2 3.7 /
\setdots<2pt>
\plot 2.5 1.5  3.5 2.5   5 1  4 0  2.5 1.5 / 
\endpicture
$$

Finally, an operation of type {\bf (B)} reduces the Hom-matrix to zero 
and yields the module $Y$:  We replace the bipicket $B_2^{7,1}$ and the 
picket $P_1^3$ in $Z'''$ by the bipicket $B_2^{7,3}$ and the picket $P_1^1$ 
for $Y$.  Done!

$$Y\leq_{\rm arc} Z''' \leq_{\rm arc}  Z'' \leq_{\rm arc}  Z' \leq_{\rm arc} Z $$

\subsection{Operations on the arc diagram}

In this section we complete the proof of Theorem~\ref{theorem-hom-arc}.
Throughout we assume that $Y,Z\in\mathcal S_2(k)$ have the same 
partition type $(\alpha,\beta,\gamma)$.
To prove the implication: $Y\leq_{\rm hom} Z\;\Rightarrow\;Y\leq_{\rm arc}Z$, 
we assume that $Y\leq_{\rm hom}Z$ holds and apply the following result
repeatedly.

\begin{prop}\label{proposition-key-lemma}
Suppose $Y$ and $Z$ have the same 
partition type, $Y\leq_{\rm hom}Z$ and
$Y\not\cong Z$.  Then there is an operation on the arc diagram for $Z$
of type {\bf (A)}, {\bf (B)}, {\bf (C)} or {\bf (D)} 
which yields a module $Z'$ such that
$$ Z'\leq_{\rm arc} Z,\qquad Z'\not\cong Z,\qquad\text{and}\qquad
   Y\leq_{\rm hom}Z'.$$
\end{prop}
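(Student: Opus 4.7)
The strategy is to extract from the Hom-matrix $\delta H := \delta H(Y,Z)$ the appropriate arc operation to apply to $Z$. We have $\delta H \geq 0$ by $Y \homleq Z$, and since Lemma~\ref{lem-hom-yields-mult} recovers the multiplicity matrix $\delta M(Y,Z)$ from $\delta H$, the hypothesis $Y \not\cong Z$ forces $\delta H \neq 0$. By Lemma~\ref{lemma-zeros}, the positive support of $\delta H$ is confined to the Moebius band generated by the bipickets $B_2^{m,r}$ and the pickets $P_1^m$.

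The key observation is that each arc operation of type {\bf (A)}, {\bf (A')}, {\bf (B)}, {\bf (C)}, or {\bf (D)}, viewed as a reduction $Z \to Z'$ with $Z' \arcleq Z$, subtracts from $\delta H$ the characteristic function of a parallelogram of $1$'s in this band. Indeed, $\delta H(Y, Z) - \delta H(Y, Z') = \delta H(Z', Z)$, and by the explicit calculations in Section~\ref{section-operations} the quantity $\delta H(Z', Z)$ is precisely one of the shaded parallelograms displayed there. So the task reduces to finding a parallelogram of one of these shapes which lies inside $\{\delta H \geq 1\}$ and whose ``source corners'' correspond to actual indecomposable summands of $Z$.

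I would proceed by selecting an extremal nonzero cell $X_0$ of $\delta H$ — for instance, choosing $X_0 = B_2^{\ell,t}$ or $X_0 = P_1^\ell$ in the support so that a natural candidate parallelogram built around it has minimal area within the Moebius band. Inspection of the parallelogram shapes in Section~\ref{section-operations} then dictates which of {\bf (A)}--{\bf (D)} is the applicable operation, with the primed and degenerate sub-cases arising exactly when a bipicket $B_2^{\ell,\ell-1}$ has to be read as $P_2^\ell \oplus P_0^{\ell-1}$. The two summands of $Z$ required to perform the operation are identified via the contravariant defect formula
$$\delta M_A \;=\; \delta H_A + \delta H_C - \sum_i \delta H_{B_i}$$
from Lemma~\ref{lem-hom-yields-mult}: at the extremal position many of the $\delta H$-values on the right vanish, forcing the corresponding $\delta M$-values to be positive and thereby locating the needed summands of $Z$.

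The main obstacle will be verifying that the chosen parallelogram lies entirely in $\{\delta H \geq 1\}$, together with the case distinction among {\bf (A)}, {\bf (A')}, {\bf (B)}, {\bf (C)}, {\bf (D)} and their degenerate sub-cases at the Moebius-band boundary. I would argue that choosing $X_0$ to minimize the parallelogram area makes non-negativity automatic, since any interior zero would exhibit a strictly smaller admissible parallelogram, contradicting the extremal choice. Once this existence is secured, the move $Z\to Z'$ simultaneously delivers $Z'\arcleq Z$, $Z'\not\cong Z$ (since $\delta H(Z',Z) \neq 0$), and $Y\homleq Z'$ (since $\delta H(Y,Z')\geq 0$), completing the proof.
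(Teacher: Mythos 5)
Your high-level picture is correct: $\delta H(Y,Z)\geq 0$, nonzero, supported on the Moebius band; each arc move $Z\to Z'$ subtracts the characteristic function of a parallelogram from $\delta H$; and the task is to locate a parallelogram of the right shape lying entirely inside the positive region whose distinguished corner modules are genuine summands of $Z$. This matches the framing in the paper. But the heart of the argument is precisely the construction of that parallelogram, and this is where your proposal has a genuine gap.

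You propose to pick an ``extremal nonzero cell $X_0$'' and build a ``natural candidate parallelogram'' of minimal area, then argue that any interior zero would yield a smaller admissible parallelogram. This begs the question in two ways. First, the notion of ``admissible parallelogram'' presumably requires that the two corner positions carry positive entries in the multiplicity matrix $\delta M(Y,Z)$ (so that the needed summands of $Z$ actually exist); but a priori it is not clear that any such parallelogram exists, so the minimality argument cannot get started. Second, it is not true in general that a zero interior to a parallelogram of positive values gives rise to a smaller admissible parallelogram; nothing forces the smaller shape to have corners with $\delta M>0$, nor to be one of the specific shapes tied to moves \textbf{(A)}--\textbf{(D)}. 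The paper's proof avoids both difficulties by a two-sided sliding search: it picks an anti-diagonal of positive $\delta H$-entries bounded by zeros, and then pushes outward in each direction, invoking Lemma~\ref{lemma-positive-region} and its dual Lemma~\ref{lemma-positive-region-ii}. These lemmas are the substantive technical input: they show that if every $\delta^{ij}=\delta M$-entry on one side of the current anti-diagonal is $\leq 0$, then the positivity of the Hom-entries propagates to the next anti-diagonal. Since the band has zero boundary rows, the search must terminate at positions where $\delta M>0$, producing both a valid parallelogram and the required summands $X',X''$ of $Z$ simultaneously. Your sketch does not contain an analogue of this propagation step, and your appeal to the contravariant defect formula (``many of the $\delta H$-values on the right vanish, forcing the $\delta M$-values to be positive'') is not a proof — at a generic extremal cell there is no reason for enough neighboring Hom-values to vanish. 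You would need to reconstruct something like Lemmas~\ref{lemma-positive-region} and \ref{lemma-positive-region-ii} to make the proposal rigorous.
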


In the proof we will use the following result.

\begin{lem}\label{lemma-positive-region}
Consider the following matrix of integers.
$$\beginpicture\setcoordinatesystem units <.5cm,.5cm>
\multiput{} at 0 0  7 7 /
\put{$\beta^{0,0}$} at 0 3 
\put{$\beta^{1,0}$} at 1 2
\put{$\beta^{u,0}$} at 3 0
\put{$\beta^{0,v-1}$} at 2 5
\put{$\beta^{1,v-1}$} at 3 4
\put{$\beta^{u,v-1}$} at 5 2
\put{$\beta^{0,v}$} at 3 6
\put{$\beta^{1,v}$} at 4 5
\put{$\beta^{u,v}$} at 6 3
\put{$\beta^{0,v+1}$} at 4.2 7.2
\put{$\beta^{1,v+1}$} at 5.2 6.2
\put{$\beta^{u,v+1}$} at 7.2 4.2
\multiput{$\cdot$} at .8 3.8  1 4  1.2 4.2    
                      1.8 2.8  2 3  2.2 3.2
                      3.8 .8  4 1  4.2 1.2 /
\multiput{$\ddots$} at 2 1  4 3  5 4  6 5 /
\setdots<2pt>
\plot -.9 3.1  2.9 7.1  7 3  3 -1  -1 3 /
\endpicture$$
Suppose the following conditions are satisfied.
\begin{enumerate}
\item All entries are nonnegative
\item The numbers $\beta^{0,0}, \beta^{0,1},\ldots,\beta^{0,v}$ are 
      strictly positive
\item $\beta^{0,v+1}=0$
\item For each $0\leq i\leq u-1$, $0\leq j\leq v$,
$$\delta^{ij}=\beta^{i,j}+\beta^{i+1,j+1}-\beta^{i,j+1}-\beta^{i+1,j}\leq 0$$
\end{enumerate}
Then all entries in the parallelogram are positive:
$\beta^{i,j}>0$ for each $0\leq i\leq u$, $0\leq j\leq v$.
\end{lem}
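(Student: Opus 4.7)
My plan is a short double telescoping argument. The rearranged hypothesis (iv) reads
$$\beta^{i,j+1}+\beta^{i+1,j} \;\geq\; \beta^{i,j}+\beta^{i+1,j+1},$$
i.e.\ in each $2\times 2$ block the antidiagonal sum dominates the diagonal sum. I will sum this inequality first over columns and then over rows, aiming to propagate the strict positivity from the left edge $\{\beta^{0,j}:0\le j\le v\}$ across the entire parallelogram, while absorbing all the ``far column'' boundary terms into the known zero at $\beta^{0,v+1}$.

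Fix a target $(i_0,j_0)$ with $0\le i_0\le u$ and $0\le j_0\le v$. For each $i$ with $0\le i\le u-1$ (so $\delta^{i,j}\le 0$ is a legitimate hypothesis for all $j_0\le j\le v$), I sum $\delta^{i,j}\le 0$ over $j=j_0,\ldots,v$. All interior terms cancel and what remains is
$$\beta^{i,j_0}-\beta^{i+1,j_0} \;\leq\; \beta^{i,v+1}-\beta^{i+1,v+1},$$
which I rewrite as
$$\beta^{i+1,j_0}-\beta^{i,j_0} \;\geq\; \beta^{i+1,v+1}-\beta^{i,v+1}.$$
Next I sum this last inequality over $i=0,1,\ldots,i_0-1$; both sides telescope, giving
$$\beta^{i_0,j_0}-\beta^{0,j_0} \;\geq\; \beta^{i_0,v+1}-\beta^{0,v+1}.$$
By hypothesis (iii) the second term on the right is zero, and by (i) the first is nonnegative, so $\beta^{i_0,j_0}\ge \beta^{0,j_0}$. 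Hypothesis (ii) then forces $\beta^{i_0,j_0}>0$, which is the claim.

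There is no real obstacle: the argument is pure bookkeeping once one chooses to telescope toward the corner $(0,v+1)$ where the known zero lives, and the ranges of summation stay within the domain where (iv) has been assumed. If I wanted to avoid the two-step telescoping I could instead sum the inequality $\delta^{i,j}\le 0$ directly over the entire rectangle $\{0,\ldots,i_0-1\}\times\{j_0,\ldots,v\}$ in one stroke, which yields exactly the same identity relating the four corners $\beta^{0,j_0},\beta^{i_0,j_0},\beta^{0,v+1},\beta^{i_0,v+1}$.
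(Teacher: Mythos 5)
Your proof is correct and takes essentially the same route as the paper: both telescope the inequality $\delta^{i,j}\le 0$ over the rectangle $\{0,\dots,i_0-1\}\times\{j_0,\dots,v\}$ to obtain the four-corner inequality $\beta^{i_0,j_0}-\beta^{0,j_0}\ge\beta^{i_0,v+1}-\beta^{0,v+1}$ and then invoke hypotheses (i)--(iii). The paper performs the double sum in one stroke (the variant you mention at the end), while you split it into two telescoping passes, but the underlying computation is identical.
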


\begin{proof}
By assumption, $\beta^{0,v},\beta^{0,v-1},\ldots,\beta^{0,0}$ are all strictly
positive.
Note that 
$$\beta^{1,v}=\beta^{0,v}+\beta^{1,v+1}-\beta^{0,v+1}-\delta^{0,v}\geq\beta^{0,v}>0.$$
Since $\delta^{0,j}+\cdots+\delta^{0,v}=\beta^{0,j}+\beta^{1,v+1}-\beta^{0,v+1}-\beta^{1,j}$, it follows that for each $0\leq j\leq v$,
$$\beta^{1,j}=\beta^{0,j}+\beta^{1,v+1}-\beta^{0,v+1}-
       (\delta^{0,j}+\cdots+\delta^{0,v})\geq\beta^{0,j}>0.$$
In general, since for each $1\leq i\leq u$, $0\leq j\leq v$,
$$\delta(i,j)=\sum_{0\leq h<i,j\leq\ell\leq v} \delta^{h,\ell}=
             \beta^{0,j}+\beta^{i,v+1}-\beta^{i,j}-\beta^{0,v+1}\leq 0,$$
we have
$$\beta^{i,j}=\beta^{0,j}+\beta^{i,v+1}-\beta^{0,v+1}-\delta(i,j)\geq \beta^{0,j}>0.$$
\end{proof}

Also the dual version holds:

\begin{lem}\label{lemma-positive-region-ii}
Consider the following matrix of integers.
$$\beginpicture\setcoordinatesystem units <.5cm,.5cm>
\multiput{} at 0 0  7 7 /
\put{$\beta^{0,v}$} at 7 4
\put{$\beta^{\text-1,v}$} at 6 5
\put{$\beta^{\text-u,v}$} at 4 7
\put{$\beta^{0,1}$} at 5 2
\put{$\beta^{\text-1,1}$} at 4 3
\put{$\beta^{\text-u,1}$} at 2 5
\put{$\beta^{0,0}$} at 4 1
\put{$\beta^{\text-1,0}$} at 3 2 
\put{$\beta^{\text-u,0}$} at 1 4
\put{$\beta^{0,\text-1}$} at 3 0
\put{$\beta^{\text-1,\text-1}$} at 2 1
\put{$\beta^{\text-u,\text-1}$} at 0 3
\multiput{$\cdot$} at 6.2 3.2  6 3  5.8 2.8  5.2 4.2  5 4  4.8 3.8  3.2 7.2  3 6  2.8 5.8 /
\multiput{$\ddots$} at 5 6  3 4  2 3  1 2 /
\setdots<2pt>
\plot 7.9 3.9  4.1 -.1  0 4  4 8  8 4 /
\endpicture$$
Suppose that in addition to conditions 1 and 2 from the previous lemma also the following are satisfied.
\begin{enumerate}
\item[3'.] $\beta^{0,-1}=0$
\item[4'.] For each $-u\leq i<0$, $-1\leq j<v$,
$$\delta^{ij}=\beta^{i,j}+\beta^{i+1,j+1}-\beta^{i,j+1}-\beta^{i+1,j}\leq 0$$
\end{enumerate}
Then all entries in the parallelogram are positive:
$\beta^{i,j}>0$ for each $-u\leq i\leq 0$, $0\leq j\leq v$.
\qed
\end{lem}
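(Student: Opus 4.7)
The plan is to mirror the argument used for Lemma~\ref{lemma-positive-region}, reflecting the direction in which the parallelogram extends from the positive row $\beta^{0,0},\ldots,\beta^{0,v}$. Here the vanishing boundary sits at column $j=-1$ rather than at $j=v+1$, so I would anchor the telescoping rectangle at the corner $(0,-1)$ and sum the second differences $\delta^{h,\ell}$ to the lower left instead of to the upper right.

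Concretely, the first step is to establish the telescoping identity: for each $-u\leq i\leq 0$ and $0\leq j\leq v$,
$$\sum_{h=i}^{-1}\sum_{\ell=-1}^{j-1}\delta^{h,\ell}\;=\;\beta^{i,-1}+\beta^{0,j}-\beta^{i,j}-\beta^{0,-1}.$$
This is routine: for fixed $h$, the inner sum collapses to $\beta^{h,-1}-\beta^{h,j}-\beta^{h+1,-1}+\beta^{h+1,j}$, and summing this over $h$ from $i$ to $-1$ telescopes a second time to the displayed right-hand side.

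Once the identity is in place, the rest is immediate. Hypothesis~4' forces the left-hand side to be nonpositive; hypothesis~3' eliminates the term $\beta^{0,-1}$; and hypothesis~1 gives $\beta^{i,-1}\geq 0$. Rearranging,
$$\beta^{i,j}\;\geq\;\beta^{0,j}+\beta^{i,-1}-\beta^{0,-1}\;\geq\;\beta^{0,j},$$
and hypothesis~2 provides $\beta^{0,j}>0$ for $0\leq j\leq v$, which is the conclusion.

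I do not expect any real obstacle: the argument is a direct mirror of the proof of Lemma~\ref{lemma-positive-region}. The only thing that needs a moment of care is to verify that the four corners $(0,j)$, $(0,-1)$, $(i,-1)$, $(i,j)$ of the telescoping rectangle land precisely on the indices where the hypotheses supply positivity, vanishing, or nonnegativity; once the rectangle is anchored at $(0,-1)$ this is automatic. Alternatively, one could reduce to the original lemma by reindexing $i\mapsto -i$ and shifting the boundary column, but the direct telescoping calculation is at least as short and keeps the dual nature of the statement transparent.
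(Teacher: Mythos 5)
Your proof is correct, and it is exactly what the paper intends: the paper states this dual lemma with no written proof (just a \qed), leaving the reader to mirror the telescoping argument from Lemma~\ref{lemma-positive-region}. Your telescoping identity anchored at the corner $(0,-1)$ is a clean, direct adaptation of the identity $\delta(i,j)=\beta^{0,j}+\beta^{i,v+1}-\beta^{i,j}-\beta^{0,v+1}$ used there, and the estimate $\beta^{i,j}\geq\beta^{0,j}+\beta^{i,-1}-\beta^{0,-1}\geq\beta^{0,j}>0$ follows by the same use of hypotheses 1, 2, and 3'.
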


\begin{proof}[Proof of Proposition~\ref{proposition-key-lemma}]

\smallskip
{\bf The set-up.} We assume that the entries in the Hom-matrix
$\delta H(Y,Z)$ are all nonnegative and that at least one entry is positive.

\smallskip
{\bf The goal.} We show that there is a parallelogram in the
Hom-matrix (in the shape of one of the shaded 
regions in Section~\ref{section-operations})
which satisfies the following two conditions.
\begin{itemize}
\item[(P1)] All entries within the parallelogram are strictly positive.
\item[(P2)] The two indecomposable modules $X'$ and $X''$ corresponding to the 
  right corner of the parallelogram and to the point just left of the
  left corner, respectively, occur with higher multiplicity as direct summands of $Z$
  than as a direct summand of $Y$.
\end{itemize}

For illustration, we repeat the situation before the first move in
the example.  The parallelogram is indicated.
$$
\begin{picture}(32,15)(0,3)
\put(-8,8){$Z:$}
\put(0,4){\line(1,0){32}}
\multiput(3,3)(4,0)7{$\bullet$}
\put(14,4){\oval(20,20)[t]}
\put(20,4){\oval(16,16)[t]}
\put(14,4){\oval(12,12)[t]}
\put(16,4){\line(0,1){15}}
\put(4,1){\makebox[0mm]{$\scriptstyle 7$}}
\put(8,1){\makebox[0mm]{$\scriptstyle 6$}}
\put(12,1){\makebox[0mm]{$\scriptstyle 5$}}
\put(16,1){\makebox[0mm]{$\scriptstyle 4$}}
\put(20,1){\makebox[0mm]{$\scriptstyle 3$}}
\put(24,1){\makebox[0mm]{$\scriptstyle 2$}}
\put(28,1){\makebox[0mm]{$\scriptstyle 1$}}
\end{picture}
\qquad\qquad
\beginpicture\setcoordinatesystem units <.4cm,.4cm>
\multiput{} at 0 0  4 4 /
\put{$\delta H(Y,Z):$} at -3 2
\plot -.3 3.9  6.5 3.9 /
\plot -.3 4.1  6.5 4.1  /
\multiput{$\scriptstyle 0$} at 0 3.5  1 3.5  2 3.5  4 3.5  5 3.5  
                               .5 3  1.5 3  4.5 3  6.5 3 
                               1 2.5  3 2.5  6 2.5
                               1.5 2  5.5 2
                               2 1.5 
                               2.5 1
                               3.5 0 /
\multiput{$\scriptstyle 1$} at 3 3.5  
                               2.5 3  3.5 3
                               2 2.5  4 2.5
                               2.5 2  3.5 2
                               3 1.5  5 1.5
                                      4.5 1
                                      4 .5 /
\multiput{$\cdot$} at        2.8 .7  3 .5  3.2 .3
                             3.3 1.2  3.5 1  3.7 .8 
                             3.8 1.7  4 1.5  4.2 1.3
                             4.3 2.2  4.5 2  4.7 1.8 
                             4.8 2.7  5 2.5  5.2 2.3
                             6.8 3.3  7 3.5  7.2 3.7 /
\setdots<2pt>
\plot 2.5 3.5  1.5 2.5  2 2  3 3  2.5 3.5 /
\endpicture
$$

\smallskip
{\bf Step 1.}
We choose a number $n>\beta_1$ and work in the category $\mathcal S_2^n$.
Recall that in the Hom matrix, the nonzero entries are confined to the 
orbits given by the modules $B_2^{3,1},\ldots,B_2^{n,1}$ which form a stripe of 
type $\mathbb Z\mathbb A_{n-2}$, with identifications. 
For the purpose of this algorithm, we view the Hom matrix as a stripe of
type $\mathbb Z\mathbb A_n$, with the orbits of the $B_2^{3,1},\ldots,B_2^{n,1}$
in the center and two orbits of zeros on the boundary.

\smallskip
Pick a sequence $\beta^{0,0},\ldots,\beta^{0,v}$ of positive entries 
arranged along an anti-diagonal in the Hom matrix, such that the neighboring entries
on the anti-diagonal, $\beta^{0,\text-1}$ and $\beta^{0,v+1}$, are both zero. 
This is possible since the entries in the matrix are nonnegative, since there
is at least one positive entry and since there are zeros on the boundary orbits.

\smallskip
In the example, we have picked the anti-diagonal 
$\beta^{0,0}=\delta H_{B_2^{6,2}}=1,\beta^{0,1}=\delta H_{B_2^{6,3}}=1,\beta^{0,2}=\delta H_{B_2^{6,4}}=1$.

\smallskip
{\bf Step 2.}
We determine the right corner of the parallelogram.  Put $u=0$.

\smallskip
Consider the sequence $\delta^{u,0},\ldots,\delta^{u,v}$ 
of entries in the multiplicity matrix in the positions given by 
the $\beta^{u,0},\ldots,\beta^{u,v}$ in the Hom matrix.

\smallskip
If one of the entries in the sequence $\delta^{u,0},\ldots,\delta^{u,v}$
is positive, put $u''=u$ and let $\delta^{u'',w''}$ be the first such entry. 
In this case, $(u'',w'')$ will be the right corner of the rectangle and
$X''$, the object corresponding to the position $(u'',w'')$, will be a summand
occurring with higher multiplicity in $Z$ than in $Y$.  We are done with the second step.

\smallskip
By Lemma~\ref{lemma-positive-region}, we obtain that all the numbers
$\beta^{u+1,0},\ldots,\beta^{u+1,v}$ in the Hom matrix, on the anti-diagonal just
under the previous anti-diagonal, are positive. 
Put $u:=u+1$ and proceed with the paragraph under Step~2.

\smallskip
Note that this process terminates:  For $u$ large enough, $\beta^{u,0}$
will correspond to a point on the boundary of the Hom matrix.  Hence $\beta^{u,0}=0$.

\smallskip
In the example, the number $\delta^{0,2}=\delta M_{B_2^{6,3}}=\beta^{0,2}+\beta^{1,3}-\beta^{0,3}-\beta^{1,2}=1$
is the first positive value among the entries in the multiplicity matrix on the 
anti-diagonal.  Thus, $X''=B_2^{6,3}$.

\smallskip
{\bf Step 3.}
We determine the left corner of the parallelogram.  Put $u=0$ and $v=v''$.

\smallskip
Consider the sequence $\delta^{u-1,-1},\ldots,\delta^{u-1,v-1}$ 
of entries in the multiplicity matrix in the positions just left of 
the $\beta^{u,0},\ldots,\beta^{u,v}$ in the Hom matrix.

\smallskip
If one of the entries in the sequence $\delta^{u-1,-1},\ldots,\delta^{u-1,v-1}$
is positive, put $u'=u-1$ and let $\delta^{u',w'}$ be the last such entry. 
In this case, $(u',w')$ will be the point just left of the left corner of the rectangle and
$X'$, the object corresponding to the position $(u',w')$, will be a summand
occurring with higher multiplicity in $Z$ than in $Y$.  We are done with the third step.

\smallskip
If none of the entries in $\delta^{u-1,-1},\ldots,\delta^{u-1,v-1}$ is positive, 
then by Lemma~\ref{lemma-positive-region-ii}, all the numbers
$\beta^{u-1,0},\ldots,\beta^{u-1,v}$ in the Hom matrix, on the anti-diagonal just
above the previous anti-diagonal, are positive. 
Put $u:=u-1$ and proceed with the paragraph under Step~3.

\smallskip
Note that this process terminates: For $-u$ large enough, the position corresponding to
$\beta^{u,0}$ will be a point on the boundary of the Hom matrix, so $\beta^{u,0}=0$.

\smallskip
In the example, $\delta^{-1,-1}=\delta M_{B_2^{5,1}}=1$ is the last (and first) positive 
entry in the multiplicity matrix on the line above the original anti-diagonal.
Thus, $X'=B_2^{5,1}$.

\smallskip
{\bf Conclusion.}
In each case the parallelogram marked off by $X'$ and $X''$ 
is of one of the types {\bf (A)}, {\bf (B)}, {\bf (C)} or {\bf (D)} in Section~\ref{section-operations}.
(Namely, if none of the modules $P_1^r$ is in the rectangle, then the type is {\bf (A)};
if $X''$ is one of the $P_1^r$, then the type is {\bf (B)}; in case $X'\cong P_1^r$ for some $r$,
then the type is {\bf (D)}; and finally, if there are modules $P_1^r$ in the rectangle, 
but neither $X'$ nor $X''$ has this form, then the type is {\bf (C)}.)
Write $Z=Z_0\oplus X'\oplus X''$  and let $X_1,\ldots,X_s$ ($s=2$ or $3$)
be the  modules corresponding to the entry $-1$ in the multiplicity matrix
for the arc operation.
Put $Z'=Z_0\oplus X_1\oplus\cdots\oplus X_s$.  By this construction,
$Z'\leq_{\rm arc} Z$.  Moreover,
$$\delta H(Y,Z')_X\;=\;\delta H(Y,Z)_X+\left\{\begin{array}{ll}-1 &
                            \text{if $X$ is in the parallelogram}\\
                     0 & \text{otherwise}\end{array}\right.$$
Since the entries for $\delta H(Y,Z)$ within the parallelogram are all
positive, the matrix $\delta H(Y,Z')$ is nonnegative and hence
$$Y\leq_{\rm hom} Z'\qquad\text{and}\qquad Z'\leq_{\rm arc}Z.$$
\end{proof}

\begin{proof}[Proof of Theorem~\ref{theorem-hom-arc}]
Assume that $Y\leq_{\rm hom}Z$.
Since each application of Proposition~\ref{proposition-key-lemma}
reduces the number of crossings in the arc diagram,
a finite number of steps
suffices to produce a sequence of modules 
$Z$, $Z'$, $Z''$, $\ldots$, $Z^{(m)}=Y$ such that
$$Y=Z^{(m)}\leq_{\rm arc} Z^{(m-1)}\leq_{\rm arc} \cdots\leq_{\rm arc} Z''
          \leq_{\rm arc} Z' \leq_{\rm arc} Z.$$
\end{proof}

\section{Dimensions via Hall polynomials}\label{sec-dim-hall}

In this section we investigate the set $V_{\alpha,\gamma}^\beta$.
We recall that if $k$ is an algebraically closed field,
$$V_{\alpha,\gamma}^\beta(k) \;=\;  \bigcup_\Gamma \; V_\Gamma(k) \;=\; 
\bigcup_\Gamma\bigcup_{\Pi} \; V_\Pi(k),$$
where the first union is indexed by all LR-tableaux $\Gamma$
of type $(\alpha,\beta,\gamma)$ and the second
union is indexed by all Klein tableaux $\Pi$ refining $\Gamma$,
and where $V_\Pi(k)=G_f$ is the orbit of a~point 
$(N_\alpha(k),N_\beta(k),f)\in V_{\alpha,\gamma}^\beta(k)$ with Klein tableau $\Pi$.

\smallskip
Correspondingly, if $k$ is a finite field of $q$ elements, there is the following
sum formula for Hall polynomials.
$$h_{\alpha,\gamma}^\beta(q) \;=\;  \sum_\Gamma h_\Gamma(q) \;=\; 
\sum_\Gamma\sum_{\Pi}h_\Pi(q),$$
where the indices are as above.  The polynomials $h_\Gamma$ are monic of the same degree
$n(\beta)-n(\alpha)-n(\gamma)$, while the polynomials $h_\Pi$ are monic of degree
$n(\beta)-n(\alpha)-n(\gamma)-x(\Pi)$ where $x(\Pi)$ denotes the deviation from dominance
\cite[Corollaries~1-3]{klein}.

\subsection{$\mathbb{Z}$-forms of objects}\label{subsec-Z-form}

The following lemma 
shows that the categories $\mathcal{N}(k)$ and $\mathcal{S}_2(k)$
can be defined over $\mathbb{Z}$ (independently on the field $k$, see \cite{kas}, \cite{kaskos}).
Denote by $\mathcal{N}(\mathbb{Z})$ the category of all systems $(V,\varphi)$,
where $V$ is a~finitely generated free abelian group and $\varphi:V\to V$
is a~nilpotent $\mathbb{Z}$-homomorphism. The morphisms are
defined in the usual way. For any field $k$, let 
$\k{(V,\varphi)}=(V\otimes_{\mathbb{Z}} k, \varphi \otimes_{\mathbb{Z}} \id)$.
Similarly, denote by $\mathcal{S}(\mathbb{Z})$ the category of all systems 
$(V,W,\psi)$, where $V$, $W$ are objects in $\mathcal{N}(\mathbb{Z})$ and $\psi:V\to W$
is a~morphism in $\mathcal{N}(\mathbb{Z})$. 
Morphisms in $\mathcal S(\mathbb Z)$ and the functor
$$-\otimes k:\qquad \mathcal S(\mathbb Z)\;\to\; \mathcal S(k)$$
are defined analogously.

\begin{lem}\label{lemma-defined-over-Z}
{\rm (1)} For any partition $\alpha$ there exists an~object 
$N_\alpha(\mathbb{Z})=(V,\varphi)$ in $\mathcal{N}(\mathbb{Z})$, such that for any field $k$: 
$$\k{N_\alpha(\mathbb{Z})}   \;\cong\;   N_\alpha(k).$$ 
Moreover there exists a~$\mathbb{Z}$-basis of $V$ such that the matrix of $\varphi$
in this basis has coefficients equal to $0$ or $1$.

{\rm (2)} For any Klein tableau $\Pi$ with entries at most two 
there exists an~object $M_\Pi(\mathbb{Z})=(V,W,\psi)$,
in $\mathcal{S}(\mathbb{Z})$, such that for any field $k$: 
$$\k{M_\Pi(\mathbb{Z})} \;\cong\;   M_\Pi(k).$$ 
Moreover there exist $\mathbb{Z}$-bases of $V$ and $W$ such that the matrix of $\psi$
in these bases has coefficients equal to $0$ or $1$.
\label{lem:Z-form} \end{lem}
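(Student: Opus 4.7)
The plan is to construct $\mathbb{Z}$-forms explicitly on the indecomposable level and then take direct sums, using that the functor $-\otimes_{\mathbb{Z}}k$ commutes with direct sums and preserves the shape of matrices.

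For (1), I would build $N_\alpha(\mathbb{Z})$ as the direct sum $\bigoplus_{i=1}^s \mathbb{Z}[T]/(T^{\alpha_i})$, viewing each summand as the free $\mathbb{Z}$-module $V_i$ with basis $e_1^{(i)},\ldots,e_{\alpha_i}^{(i)}$ and with $\varphi_i(e_j^{(i)}) = e_{j+1}^{(i)}$ for $j<\alpha_i$ and $\varphi_i(e_{\alpha_i}^{(i)})=0$. The resulting matrix is a direct sum of Jordan blocks with entries in $\{0,1\}$, and $\varphi$ is obviously nilpotent. Tensoring with any field $k$ gives a $k$-module $V_i\otimes k \cong k[T]/(T^{\alpha_i})$ carrying the same shift operator, so $\k{N_\alpha(\mathbb{Z})}\cong N_\alpha(k)$ as required.

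For (2), the key observation is that by Theorem~\ref{thm-pic-bipic} every object $M_\Pi(k)$ decomposes as a direct sum of pickets $P_\ell^m(k)$ and bipickets $B_2^{m,r}(k)$, with the isomorphism type of the decomposition recoverable from $\Pi$ alone (independently of $k$). Hence it suffices to exhibit $\mathbb{Z}$-forms for the indecomposables. For a picket $P_\ell^m$, I take $V=\mathbb{Z}^\ell$ and $W=\mathbb{Z}^m$ equipped with the shift operators from (1), and let $\psi$ send the chosen basis of $V$ to the ``bottom'' basis vectors of $W$ (so that $\psi(e_j)=e_{j+m-\ell}$); the matrix has entries in $\{0,1\}$. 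For a bipicket $B_2^{m,r}$, I take $V=\mathbb{Z}^2$ with basis $v_1,v_2$ and $\varphi(v_1)=v_2$, and $W=\mathbb{Z}^m\oplus\mathbb{Z}^r$ with the Jordan basis from (1); then define $\psi(v_1)=(T^{m-2},T^{r-1})$ and $\psi(v_2)=(T^{m-1},0)$, which are basis vectors of $W$, so again the matrix has entries in $\{0,1\}$. Finally, set $M_\Pi(\mathbb{Z})$ to be the direct sum of these $\mathbb{Z}$-forms taken according to the combinatorial decomposition of $M_\Pi$ read off from $\Pi$ (as explained at the end of Section~\ref{section-pickets}).

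It remains to verify that $\k{M_\Pi(\mathbb{Z})}\cong M_\Pi(k)$ for every field $k$. Since $-\otimes_\mathbb{Z} k$ commutes with finite direct sums and sends each $\mathbb{Z}$-form above to the corresponding picket or bipicket over $k$ (the defining matrices being integral, in fact $0/1$-valued, are unchanged by the tensor product), the claim follows immediately. I do not anticipate any serious obstacle: the whole argument is a bookkeeping exercise once the decomposition of $M_\Pi$ into pickets and bipickets is invoked. The only point requiring a moment's care is checking, for bipickets, that the vectors $(T^{m-2},T^{r-1})$ and $(T^{m-1},0)$ really are basis vectors of the chosen Jordan basis of $W$ — this is where the $0/1$-entry condition would fail if one picked a different (but isomorphic) embedding.
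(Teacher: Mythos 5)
Your argument follows the same route as the paper's: build $N_\alpha(\mathbb{Z})$ as a direct sum of Jordan blocks, use Theorem~\ref{thm-pic-bipic} to reduce (2) to pickets and bipickets, construct explicit $\mathbb{Z}$-forms of each, and conclude by additivity of $-\otimes_{\mathbb{Z}}k$. One small inaccuracy: in the bipicket case the image $\psi(v_1)=(T^{m-2},T^{r-1})$ is not a single basis vector of the Jordan basis of $W$ but the sum $e_{m-1}^{(1)}+e_{r}^{(2)}$ of two of them; however the matrix of $\psi$ still has all entries in $\{0,1\}$, which is all that is claimed, so the proof is correct.
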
 

\begin{proof}
(1) Let $\alpha=(\alpha_1\geq\ldots\geq\alpha_n)$ be a~partition.
Set $N_\alpha(\mathbb{Z})=(V,\varphi)$, where $V=\mathbb{Z}^{\alpha_1}\oplus\ldots\oplus\mathbb{Z}^{\alpha_n}$,
and $\varphi$ is a~direct sum of nilpotent Jordan blocks of sizes $\alpha_1,\ldots,\alpha_n$.
It is clear that $N_\alpha(\mathbb{Z})$ satisfies the required conditions.

(2)  Let $\Pi$ be a~Klein tableau with entries at most two and let $k$ be an~arbitrary field. 
By Theorem \ref{thm-pic-bipic} the module
$M_\Pi(k)$ is a~direct sum of pickets $P_\ell^m(k)$ 
and bipickets $B_2^{m,r}(k)$. We set 
$$P_\ell^m(\mathbb{Z})=(N_{(\ell)}(\mathbb{Z}),N_{(m)}(\mathbb{Z}),\iota)$$ 
and 
$$B_2^{m,r}(\mathbb{Z})=(N_{(2)}(\mathbb{Z}),N_{(m,r)}(\mathbb{Z}),\delta).$$
Note that there exist bases such that the matrices of $\iota$ and $\delta$ 
have entries equal to $0$ and $1$.
Moreover $\k{P_\ell^m(\mathbb{Z})}\cong P_\ell^m(k)$ and $\k{B_2^{m,r}(\mathbb{Z})}\cong B_2^{m,r}(k)$.
This finishes the proof.    
\end{proof}

\subsection{Dimensions of orbits}\label{subsec-dim-orbit}

\smallskip
Let now $k$ be an algebraically closed field,
$\alpha$, $\beta$, $\gamma$ partitions with $\alpha_1\leq 2$,
let $\Gamma$ be an~LR-tableau of type $(\alpha,\beta,\gamma)$,
$\Pi$ a Klein tableau refining $\Gamma$, 
and $M_\Pi(k)=(N_\alpha,N_\beta,f)$ an object in $\mathcal{S}_2(k)$ 
corresponding to $\Pi$.
In Section \ref{sec-four-partial} we described an~algebraic group action
under which $V_\Pi(k)=G_f$ is an orbit.  Then we have
  \begin{equation} \dim\, V_\Pi(k) = \dim \Aut_{\mathcal{N}}(N_\alpha(k))
    + \dim\Aut_{\mathcal{N}}(N_\beta(k)) -\dim \Aut_{\mathcal{S}}(M_\Pi(k)),
   \label{eq:dimension}  \end{equation}
where $\dim$ is the variety dimension. The subset $V_\Pi(k)$
of  $V_\alpha^\beta(k)$ is locally closed, because it is an~orbit
of an~action of an~algebraic group. Therefore $V_\Pi(k)$ is locally closed in
$H_\alpha^\beta(k)$.

Recall the result of Lang and Weil \cite{lw} (see also \cite[Proposition 5.6]{kr}).
Let $\mathbb{F}_p$ be a~field with $p$ elements, let $k=\ov{\mathbb{F}_p}$ be its algebraic closure
and let $n\in \mathbb{N}$. 
Moreover, let $\mathcal{U}\subseteq k^n$ be a~locally closed subset which is closed
under the Frobenius automorphism $\sigma$ of $k^n$ (i.e.\ under the  morphism
$\sigma:k^n\to k^n$ defined by $\sigma(x)=\sigma(x_1,\ldots,x_n)=(x_1^p,\ldots,x_n^p)=x^{(p)}$). 
For any finite subfield
$\mathbb{F}_q$ of $k$,  set $$ \mathcal{U}(\mathbb{F}_q)=\mathcal U\cap \mathbb{F}_q^n. $$

\begin{prop}[\cite{lw},\cite{kr}]
 If $\mathcal{U}$ is closed under $\sigma$, then
  $$ \# \mathcal{U}(\mathbb{F}_q)\approx c\,q^{\dim\mathcal{U}},$$
where $c$ is the number of irreducible components of $\mathcal{U}$ of maximal dimension, and
$f(q)\approx g(q)$ if $\lim_{q\to\infty}\frac{f(q)-g(q)}{f(q)}=0$.
\label{prop:lw}\qed\end{prop}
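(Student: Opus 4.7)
The plan is to reduce the statement to the classical Lang--Weil estimate for absolutely irreducible varieties, handling reducibility and the passage from locally closed to closed by dimension-counting. First I would replace $\mathcal U$ by its Zariski closure $\overline{\mathcal U}$ in $k^n$: the difference $\overline{\mathcal U}\setminus \mathcal U$ is closed in $\overline{\mathcal U}$ and of strictly smaller dimension, so by noetherian induction on dimension its number of $\mathbb F_q$-rational points is $O(q^{\dim\mathcal U - 1})$, which is absorbed into the asymptotic error; the constant $c$ is unchanged because the irreducible components of maximal dimension of $\mathcal U$ and of $\overline{\mathcal U}$ agree.

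Next I would decompose $\overline{\mathcal U}=V_1\cup\cdots\cup V_r$ into its geometrically irreducible components. The Frobenius $\sigma$ permutes these components, and for $q$ a sufficiently high power of $p$ every $V_i$ becomes $\sigma$-stable and hence defined over $\mathbb F_q$; since $\approx$ is only an asymptotic $q\to\infty$ statement, this is harmless. Pairwise intersections $V_i\cap V_j$ have strictly smaller dimension than either $V_i$ or $V_j$, so inclusion-exclusion contributes only lower-order error, and the count reduces, up to $O(q^{\dim\mathcal U-1})$, to the sum $\sum_i \# V_i(\mathbb F_q)$, of which only the $c$ components with $\dim V_i = \dim\mathcal U$ will matter to leading order.

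The main obstacle is the Lang--Weil estimate itself: for an absolutely irreducible variety $V$ of dimension $d$ over $\mathbb F_q$ one has $\#V(\mathbb F_q) = q^d + O(q^{d-1/2})$. I would prove this by induction on $d$; the base case $d=1$ is the Weil bound for an absolutely irreducible curve, a consequence of the Riemann hypothesis for function fields over finite fields, which I invoke as a black box. For $d\geq 2$ I would embed $V$ projectively and slice by a generic $\mathbb F_q$-hyperplane $H$; a Bertini-type count shows that all but an $O(q^{-1})$-fraction of hyperplanes $H\in \mathbb P^n(\mathbb F_q)$ give $V\cap H$ absolutely irreducible of dimension $d-1$, so the inductive hypothesis applies. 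Averaging $\#(V\cap H)(\mathbb F_q)$ over $H$ and using that each $\mathbb F_q$-point of $V$ lies in a controlled number of hyperplanes then yields the claimed asymptotic, and summing $\#V_i(\mathbb F_q)\approx q^{\dim V_i}$ over the $c$ maximal-dimensional components concludes the proof.
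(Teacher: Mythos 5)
The paper does not prove this proposition; it is stated as a citation of Lang--Weil \cite{lw} and Kraft--Riedtmann \cite{kr}, with a Q.E.D.\ mark placed immediately after the statement, and is then used as a black box. So there is no argument in the paper for you to match against.

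Your reduction steps are sound and standard: passing from the locally closed $\mathcal U$ to its Zariski closure costs only a lower-dimensional error, the decomposition into geometrically irreducible components and inclusion-exclusion is fine, and components moved nontrivially by the Frobenius indeed contribute only lower-order terms since their $\mathbb F_q$-points sit inside a smaller-dimensional intersection. The genuine gap is in the inductive step. Your assertion that a ``Bertini-type count shows that all but an $O(q^{-1})$-fraction of hyperplanes $H\in\mathbb P^n(\mathbb F_q)$ give $V\cap H$ absolutely irreducible of dimension $d-1$'' is not a routine computation. Classical Bertini over the algebraically closed field $\overline{\mathbb F_q}$ gives irreducibility for a dense open set of hyperplanes, but that does not by itself yield a density estimate on the finite set $\mathbb P^n(\mathbb F_q)$; what you need is a quantitative Bertini irreducibility theorem over finite fields, which is a nontrivial theorem in its own right and was only established rigorously well after Lang--Weil. (Lang and Weil themselves avoid it: their argument projects $V$ to a generic linear subspace of dimension $d+1$, reduces to absolutely irreducible hypersurfaces, and then slices down to plane curves while carefully tracking degrees and counting multiplicities of the projection.) Without either supplying such an effective Bertini statement or correctly attributing it, your inductive step does not close, and the proposal is incomplete precisely at the point where the real difficulty of Lang--Weil lies.
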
 \vsp

We use this result to determine the dimension of the orbit $V_\Pi(k)$
as the degree of the Hall polynomial $g_\Pi(t)$.
Let 
$$\begin{array}{rcl}
  a_{\alpha}(q) &  = &  \#\Aut_{\mathcal{N}(\mathbb{F}_q)}(N_\alpha(\mathbb{F}_q)), \\[1ex]
  a_{\Pi}(q)   &  =  &  \#\Aut_{\mathcal{S}(\mathbb{F}_q)}(M_\Pi(\mathbb F_q)),\;\text{and} \\[1ex]
  g_{\Pi}(q)  &  =  &  \displaystyle \frac{a_\beta(q) \cdot  a_\alpha(q)}{a_\Pi(q)}.\end{array}
$$

Note that 
$$ \#V_\Pi(\mathbb{F}_q)\; =\;  g_{\Pi}(q),$$
and
$$ g_{\Pi}(q)=h_{\Pi}(q)\cdot a_{\alpha}(q) $$
for any $q$ (compare \cite[3.1]{sch}).

\begin{lem} The function $a_{\alpha}(q)$ is a~polynomial in $q$
with integral coefficients. If $\Pi$ is a Klein tableau of type $(\alpha,\beta,\gamma)$,
then $a_{\Pi}(q)$ and $g_{\Pi}(q)$ are polynomials in $q$
with integral coefficients.
\label{lem:poly}\end{lem}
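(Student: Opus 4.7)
The plan is to handle the three polynomiality claims in turn, leaning on the $\mathbb{Z}$-forms produced in Lemma~\ref{lem:Z-form} together with the Krull--Schmidt decomposition of the relevant objects. The role of the $\mathbb{Z}$-forms is to guarantee that all $\mathbb{F}_q$-dimensions in sight are constants independent of $q$, which reduces the counting problem to the standard formula $|R^\times|=|J(R)|\cdot|(R/J(R))^\times|$ for the size of the unit group of a finite Artinian ring $R$.

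For $a_\alpha(q)$, the algebra $E_\alpha=\End_{\mathbb{F}_q[T]}(N_\alpha(\mathbb{F}_q))$ has $\mathbb{F}_q$-dimension $\sum_{i,j}\min(\alpha_i,\alpha_j)$, independent of $q$ by Lemma~\ref{lem:Z-form}. Its semisimple quotient is $E_\alpha/J(E_\alpha)\cong\prod_k M_{m_k}(\mathbb{F}_q)$, with $m_k=\bar\alpha_k-\bar\alpha_{k+1}$ the number of parts of $\alpha$ equal to $k$. The identity recalled above gives
\[
  a_\alpha(q)\;=\;q^{\dim_{\mathbb{F}_q}J(E_\alpha)}\,\cdot\,\prod_k|\mathrm{GL}_{m_k}(\mathbb{F}_q)|,
\]
and since $|\mathrm{GL}_n(\mathbb{F}_q)|=\prod_{i=0}^{n-1}(q^n-q^i)\in\mathbb{Z}[q]$, we obtain $a_\alpha(q)\in\mathbb{Z}[q]$.

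For $a_\Pi(q)$, decompose $M_\Pi(\mathbb{F}_q)=\bigoplus_X X^{n_X}$ into pickets and bipickets via Theorem~\ref{thm-pic-bipic}. A direct matrix computation using the presentations of $P_\ell^m$ and $B_2^{m,r}$ from Section~\ref{section-pickets} shows that every indecomposable $X$ is \emph{absolutely} indecomposable: $\End_{\mathcal S}(X(\mathbb{F}_q))$ is local with residue field $\mathbb{F}_q$. Hence $\End(M_\Pi)/\rad\cong\prod_X M_{n_X}(\mathbb{F}_q)$ and the same argument as above yields
\[
  a_\Pi(q)\;=\;q^{\dim_{\mathbb{F}_q}J(\End M_\Pi)}\,\cdot\,\prod_X|\mathrm{GL}_{n_X}(\mathbb{F}_q)|\,\in\,\mathbb{Z}[q].
\]
For $g_\Pi(q)$, the orbit--stabilizer theorem applied to the $\Aut_{\mathcal N}(N_\alpha(\mathbb{F}_q))\times\Aut_{\mathcal N}(N_\beta(\mathbb{F}_q))$-action on $V_{\alpha,\gamma}^\beta(\mathbb{F}_q)$ identifies the stabilizer at a point with Klein tableau $\Pi$ as $\Aut_{\mathcal S}(M_\Pi(\mathbb{F}_q))$, so $|G_\Pi(\mathbb{F}_q)|=a_\alpha(q)a_\beta(q)/a_\Pi(q)\in\mathbb{Z}_{\geq 0}$. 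The rational function $g_\Pi$ is thus integer-valued on the infinite set of prime powers, which forces its denominator in lowest terms to be constant and hence $g_\Pi\in\mathbb{Q}[q]$. Finally, each monomial $q$ and each $|\mathrm{GL}_n(\mathbb{F}_q)|$ has a coefficient equal to $\pm1$, so each is primitive in $\mathbb{Z}[q]$; by Gauss's lemma $a_\Pi(q)$ is primitive, and a primitive polynomial that divides an element of $\mathbb{Z}[q]$ in $\mathbb{Q}[q]$ already divides it in $\mathbb{Z}[q]$, placing $g_\Pi(q)$ in $\mathbb{Z}[q]$.

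The main obstacle is the absolute-indecomposability check in the second paragraph: while it is immediate for $P_0^m$, $P_1^m$, $P_2^m$ (all of whose endomorphism rings are $\cong k[T]/(T^m)$), for the bipickets $B_2^{m,r}$ one has to chase through the defining equation $\phi_2\delta=\delta\phi_1$ to see that the residue field is $\mathbb{F}_q$ rather than a proper extension; this step depends on the condition $r\leq m-2$ and is the only part of the argument that is not automatic from the $\mathbb{Z}$-form together with standard Artinian ring theory.
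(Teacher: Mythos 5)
Your proof is correct and strictly more self-contained than the paper's, which dispatches all three claims by citation: $a_\alpha$ is referred to Macdonald's formula for $|\Aut|$ of a finite abelian $p$-group, $a_\Pi$ to the Hom-dimension table from \cite[Lemma 4]{sch}, and $g_\Pi$ to \cite[Section 2, Lemma (4)]{rin1}. You instead reprove everything from first principles via the unit-group identity $|R^\times| = |J(R)|\cdot|(R/J(R))^\times|$, together with the observation that every indecomposable in $\mathcal S_2(\mathbb F_q)$ is absolutely indecomposable (which indeed follows, more cheaply than by a matrix chase, from the fact that the classification by pickets and bipickets in Theorem~\ref{thm-pic-bipic} is uniform over all fields, so an indecomposable stays indecomposable over $\overline{\mathbb F_q}$, forcing $\End/\rad \cong \mathbb F_q$). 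The resulting expressions $a_\alpha(q) = q^{\dim J}\prod_k|\mathrm{GL}_{m_k}(\mathbb F_q)|$ and $a_\Pi(q) = q^{\dim J}\prod_X|\mathrm{GL}_{n_X}(\mathbb F_q)|$ are monic products of monics in $\mathbb Z[q]$, once one notes that the $\mathbb F_q$-dimensions are $q$-independent (which the Hom-dimension table guarantees, without appeal to flat base change from $\mathbb Z$). For $g_\Pi$ your route is genuinely different from and arguably cleaner than the reference: the orbit--stabilizer theorem gives integer-valuedness, polynomial division shows that an integer-valued rational function on infinitely many prime powers is a polynomial over $\mathbb Q$, and Gauss's lemma applied to the primitive factor $a_\Pi$ in $a_\Pi \cdot g_\Pi = a_\alpha a_\beta$ forces $g_\Pi \in \mathbb Z[q]$. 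This completely bypasses the Hall-algebra arguments the paper defers to, at the cost of having to establish primitivity of $a_\Pi$, which your monicity observation handles.
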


\begin{proof}
By \cite[II.1.6]{macd} and \cite[Lemma 4]{sch}, $a_{\alpha}(q)$, $a_{\Pi}(q)$ are polynomials 
in $q$ with integral coefficients.
To prove that  $g_{\Pi}(q)$ is a~polynomial in $q$ with integral coefficients
one can develop arguments given in \cite[Section 2, Lemma (4)]{rin1}
\end{proof}

\begin{prop}
  Let $\alpha, \beta$ be  partitions, let $p$ be a~prime number
and let $k=\ov{\mathbb{F}_p}$. Then 
   \begin{enumerate}
       \item $ \dim \Aut_{\mathcal{N}}(N_\alpha(k)) = \deg a_{\alpha},$
   \end{enumerate}
If in addition $M_\Pi(k)=(N_\alpha(k), N_\beta(k),f)$ is an object in $\mathcal{S}_2(k)$ 
with Klein tableau $\Pi$, then 
   \begin{enumerate}
        \item[2.] $ \dim \Aut_{\mathcal{S}}(M_\Pi(k)) 
          = \deg a_{\Pi},$
       \item[3.] $ \dim V_\Pi(k) = \deg g_{\Pi}.$
   \end{enumerate}   
 \label{prop:dim-fin-char}
\end{prop}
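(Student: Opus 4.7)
The plan is to deduce all three statements from the Lang--Weil estimate (Proposition~\ref{prop:lw}) applied to the automorphism groups $\Aut_{\mathcal{N}}(N_\alpha(k))$ and $\Aut_{\mathcal{S}}(M_\Pi(k))$, with part~3 following formally from the orbit-stabilizer identity~\eqref{eq:dimension} combined with the algebraic relation $g_\Pi \cdot a_\Pi = a_\alpha \cdot a_\beta$.

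First I would verify that both automorphism groups are locally closed subvarieties of affine $k$-spaces that are closed under the Frobenius automorphism~$\sigma$. By Lemma~\ref{lem:Z-form}, the objects $N_\alpha(k)$ and $M_\Pi(k)$ arise by base change $-\otimes k$ from $\mathbb{Z}$-forms whose structure maps are represented by $0$--$1$ matrices in fixed bases. The defining conditions of each automorphism group --- commutation with the structure maps together with invertibility --- therefore reduce to polynomial equations and inequations in the matrix entries with coefficients in $\mathbb{Z}$. Hence these varieties are defined over $\mathbb{F}_p$ and closed under~$\sigma$, and for every finite subfield $\mathbb{F}_q \subset k$ their $\mathbb{F}_q$-points are counted respectively by $a_\alpha(q)$ and $a_\Pi(q)$.

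Proposition~\ref{prop:lw} then yields
\[
a_\alpha(q) \;\approx\; c_1\, q^{\dim \Aut_{\mathcal{N}}(N_\alpha(k))}
\quad\text{and}\quad
a_\Pi(q) \;\approx\; c_2\, q^{\dim \Aut_{\mathcal{S}}(M_\Pi(k))}.
\]
Since both $a_\alpha$ and $a_\Pi$ are genuine polynomials by Lemma~\ref{lem:poly}, the $\approx$-asymptotic forces the leading degree of each polynomial to equal the corresponding variety dimension, proving parts~1 and~2. For part~3, substitution into~\eqref{eq:dimension} gives
\[
\dim G_\Pi(k) \;=\; \deg a_\alpha + \deg a_\beta - \deg a_\Pi \;=\; \deg g_\Pi,
\]
the last equality because $g_\Pi \cdot a_\Pi = a_\alpha \cdot a_\beta$ holds as an identity in $\mathbb{Z}[q]$ with positive leading coefficients, so degrees add across the quotient.

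The main obstacle, I expect, is essentially bookkeeping rather than conceptual: one must carefully verify that each variety is genuinely defined over $\mathbb{F}_p$, which amounts to tracing through the $\mathbb{Z}$-form construction of Lemma~\ref{lem:Z-form} and confirming that the commutation relations cutting out the automorphism groups remain integral. Once this is in place, the Lang--Weil estimate together with elementary degree comparisons for polynomials in $\mathbb{Z}[q]$ delivers all three equalities at once.
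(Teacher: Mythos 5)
Your proposal is correct and matches the paper's approach: both rely on the $\mathbb{Z}$-form construction of Lemma~\ref{lem:Z-form} to verify Frobenius-stability and then invoke the Lang--Weil estimate. The only difference is organizational --- you apply Lang--Weil to the two automorphism varieties and deduce part~3 formally from the orbit-stabilizer identity~\eqref{eq:dimension} together with $g_\Pi a_\Pi = a_\alpha a_\beta$, whereas the paper applies Lang--Weil directly to the orbit $G_\Pi(k)$ (which requires a separate conjugation argument, $X^{(p)}=A^{(p)} Y (A^{(p)})^{-1}$, to establish that $G_\Pi(k)$ is Frobenius-stable) and then notes that parts~1--2 follow similarly; your ordering neatly sidesteps that extra verification.
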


\begin{proof}
We observed that the set $V_\Pi(k)$ is a~locally closed subset of the affine variety 
$H_\alpha^\beta(k)$. 
Note that the sets $\Aut_{\mathcal{N}}(N_\alpha(k))$, 
$\Aut_{\mathcal{S}}(M_\Pi(k))$ are locally closed subsets
of the affine varieties $\End_k(N_\alpha(k))$,
$\End_k(N_\alpha(k), N_\beta(k),f)$, respectively.
Applying Lemma \ref{lem:Z-form}, we prove  that 
$V_\Pi(k)$, $\Aut_{\mathcal{N}}(N_\alpha(k))$, 
$\Aut_{\mathcal{S}}(M_\Pi(k))$ are closed under the Frobenius automorphism $\sigma$.
Consider the subset  $\Aut_{\mathcal{N}}(N_\alpha(k))$ of $\End_k(N_\alpha(k))$. Note that a~matrix $X$
is in $\Aut_{\mathcal{N}}(N_\alpha(k))$ if and only if $\det X\neq 0$ and $X\cdot A=A\cdot X$,
where the matrix $A$ has entries $0$ or $1$ (apply Lemma \ref{lem:Z-form}).
Therefore $\det X^{(p)}\neq 0$ and $X^{(p)}A=(X\cdot A)^{(p)}=(A\cdot X)^{(p)}=A\cdot X^{(p)}$.
This proves that $X^{(p)}$ is in $\Aut_{\mathcal{N}}(N_\alpha(k))$. Similarly, we prove that
$\Aut_{\mathcal{S}}(M_\Pi(k))$ is closed under $\sigma$.
By Lemma \ref{lem:Z-form},  $X$ is in $V_\Pi(k)$ if and only if there exists
a~matrix $Y\in V_\Pi(k)$ with entries $1$ and $0$ and an~invertible matrix 
$A$ such that $X=A\cdot Y\cdot A^{-1}$.
Then  $X^{(p)}=(A\cdot Y\cdot A^{-1})^{(p)}=A^{(p)}\cdot Y\cdot (A^{(p)})^{-1}$. This shows that
$X^{(p)}$ is in $V_\Pi(k)$.

Therefore by Proposition \ref{prop:lw},
$$g_{\Pi}(q) \approx c q^{\dim G_k(\Pi)}.$$ Finally,
 $$ \dim V_\Pi(k) = \deg g_{\Pi}.$$ The remaining statements follow in a~similar way.
\end{proof}

\begin{prop}
  Let $\alpha, \beta$ be partitions and let $M_\Pi(k)=(N_\alpha(k), N_\beta(k),f)$ 
be an object in $\mathcal{S}_2(k)$ with Klein tableau $\Pi$. Then
   $$ \dim V_\Pi(k) = \deg g_{\Pi}$$
for any algebraically closed field $k$.
 \label{prop:dim-zero-char}
\end{prop}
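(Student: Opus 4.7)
The plan is to reduce the statement to Proposition~\ref{prop:dim-fin-char}, which already proves the equality over $\overline{\mathbb F_p}$, by showing that each of the three quantities in the orbit formula~(\ref{eq:dimension}) is an invariant of the combinatorial data $(\alpha,\beta,\Pi)$ and therefore does not depend on the choice of algebraically closed ground field $k$. Once this invariance is established, fixing any prime $p$ and comparing the dimensions over $k$ with those over $k'=\overline{\mathbb F_p}$ will immediately give
$$\dim G_\Pi(k) \;=\; \deg a_\alpha+\deg a_\beta-\deg a_\Pi \;=\; \deg g_\Pi.$$

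First I would treat $\dim\Aut_{\mathcal N}(N_\alpha(k))$. This group is a nonempty Zariski-open subset of the affine $k$-vector space $\End_{k[T]}(N_\alpha(k))$, so its variety dimension coincides with $\dim_k\End_{k[T]}(N_\alpha(k))=\sum_{i,j}\min\{\alpha_i,\alpha_j\}$, which is visibly the same integer for every $k$. The identical argument handles $\dim\Aut_{\mathcal N}(N_\beta(k))$.

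The hard part will be the corresponding statement for $\dim\Aut_{\mathcal S}(M_\Pi(k))$, because a priori the endomorphism algebra of an object in $\mathcal S_2(k)$ is cut out from $\End_{k[T]}(N_\alpha(k))\oplus\End_{k[T]}(N_\beta(k))$ by linear conditions whose rank could depend on the characteristic. The plan is to bypass this by invoking Theorem~\ref{thm-pic-bipic}: the Klein tableau $\Pi$ prescribes the multiset of indecomposable summands of $M_\Pi(k)$ (pickets and bipickets) uniformly for every field $k$, and the $\mathbb Z$-forms of Lemma~\ref{lem:Z-form} make precise that this Krull--Schmidt decomposition is genuinely characteristic-free. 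I would then decompose $\End_{\mathcal S}(M_\Pi(k))$ into Hom-spaces between these indecomposables and read off their $k$-dimensions from the explicit combinatorial table of Section~\ref{section-dim-hom}. Since those formulas involve only $\min$ and indicator expressions in the numerical parameters of the summands, they sum to a field-independent total; as $\Aut_{\mathcal S}(M_\Pi(k))$ is open in its endomorphism algebra, its variety dimension coincides with this total.

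With all three terms of~(\ref{eq:dimension}) now known to take the same value for $k$ and for $k'=\overline{\mathbb F_p}$, I would appeal to Proposition~\ref{prop:dim-fin-char} applied over $k'$ to replace them by $\deg a_\alpha$, $\deg a_\beta$, and $\deg a_\Pi$, respectively, and conclude $\dim G_\Pi(k)=\deg g_\Pi$. The entire argument therefore rests on a single combinatorial point, namely that both the indecomposable decomposition of $M_\Pi$ and the Hom-space dimensions in $\mathcal S_2$ are characteristic-independent; once this is secured, the reduction to the finite-characteristic case is formal.
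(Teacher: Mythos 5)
Your proposal is correct, and it takes a genuinely different route from the paper. The paper's proof also reduces to Proposition~\ref{prop:dim-fin-char} via equation~(\ref{eq:dimension}) and the identity $\dim\Aut=\dim_k\End$, but to show that the two endomorphism-dimensions are field-independent it encodes the statements $\dim_k\End_{\mathcal N}(N_\alpha(k))=n_k$ and $\dim_k\End_{\mathcal S}(M_\Pi(k))=m_k$ as first-order formulae in the language of rings (using the $\mathbb Z$-forms of Lemma~\ref{lem:Z-form}) and then invokes the Characteristic Transfer Principle of Jensen--Lenzing to carry them from $\overline{\mathbb F_p}$ to an arbitrary algebraically closed field. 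You instead compute those dimensions directly: Theorem~\ref{thm-pic-bipic} pins down the Krull--Schmidt decomposition of $M_\Pi(k)$ into pickets and bipickets purely from $\Pi$, and the explicit Hom-dimension table of Section~\ref{section-dim-hom} gives a closed combinatorial expression for $\dim_k\End_{\mathcal S}(M_\Pi(k))$ involving only $\min$'s and indicator functions, hence manifestly independent of $k$. Your route is more elementary and self-contained, avoiding the model-theoretic machinery entirely, at the cost of relying on the explicit Hom table (so it is specific to $\mathcal S_2$ rather than a general-purpose transfer argument). Both proofs are valid; yours is arguably preferable in this concrete setting since the combinatorial data are already in hand.
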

 
\begin{proof}
Let $k$ be an~arbitrary algebraically closed field. 
It is well known that
\begin{itemize}
        \item $ \dim \Aut_{\mathcal{N}}(N_\alpha(k)) = \dim_k \End_{\mathcal{N}}(N_\alpha(k)),$
        \item $ \dim \Aut_{\mathcal{S}}(M_\Pi(k)) = \dim_k \End_{\mathcal{S}}(M_\Pi(k)).$
\end{itemize}
Developing Lemma \ref{lem:Z-form}, it is easy to see that the properties 
\begin{itemize}
        \item $ \dim_k \End_{\mathcal{N}}(N_\alpha(k)) = n_k,$
        \item $ \dim_k \End_{\mathcal{S}}(M_\Pi(k)) = m_k,$
\end{itemize}
can be written as first order formulae in the language of rings
\cite[Chapter~10]{JL2}. By Proposition~\ref{prop:dim-fin-char}, for any $k=\overline{\mathbb{F}_p}$ we have 
$n_k=\deg a_{\alpha}$ and $m_k=\deg a_{\Pi}$. Therefore these numbers do
not depend on the prime number $p$. The Characteristic Transfer
Principle \cite[Theorem 1.14]{JL2} and Proposition \ref{prop:dim-fin-char} applied to these formulae
yield that for any algebraically closed field $k$ we have 
\begin{itemize}
       \item $ \dim \Aut_{\mathcal{N}}(N_\alpha(k)) = \deg a_{\alpha},$
        \item $ \dim \Aut_{\mathcal{S}}(M_\Pi(k)) = \deg a_{\Pi},$
   \end{itemize}
Then Formula \ref{eq:dimension} proves that $$\dim V_\Pi(k) = \deg g_{\Pi}.$$
\end{proof}

\medskip
We can now complete the proof of Theorem~\ref{thm-second-main}.


\begin{proof}
For the proof that $\dim V_\Pi(k)=\deg h_{\alpha,\gamma}^\beta + \deg a_\alpha -x(\Delta)$,
we have already seen that $\dim V_\Pi(k)=\deg g_\Pi$ (Proposition~\ref{prop:dim-zero-char}).
Since each image of an embedding $N_\alpha(q)\to N_\beta(q)$ can be realized by 
$a_\alpha(q)$ many monomorphisms, we deduce
$$g_\Gamma=h_\Gamma\cdot a_{\alpha}, \quad\text{and}\quad g_\Pi=h_\Pi\cdot a_{\alpha}.$$
We recall from \cite[II, (1.6)]{macd} that $\deg a_{\alpha}=|\alpha|+2n(\alpha)$.
The equality $\deg h_\Pi=\deg h_\Gamma - x(\Pi)$ is shown in \cite{klein}:
In Corollary~1, the Hall polynomial is computed as
$$h_\Pi(q)=q^{\deg h_\Gamma} f\big(\Pi,{\textstyle \frac 1q}\big).$$ 
According to the paragraph leading to \cite[Definition~3.5]{klein}, 
$q^{x(\Pi)}f(\Pi,\frac 1q)$ is a polynomial in $\frac 1q$ with 
constant coefficient $1$.  Then $q^{\deg h_\Gamma} f\big(\Pi,\frac1q\big)$ is a monic
polynomial in $q$ of degree $\deg h_\Gamma-x(\Pi)$.
It remains to observe that 
$\deg h_\Gamma=\deg h_{\alpha,\gamma}^\beta=n(\beta)-n(\alpha)-n(\gamma)$ 
\cite[Corollary~2, p.~77]{klein},
and that $x(\Delta)=x(\Pi)$ (Lemma~\ref{lemma-crossings}).
\end{proof}

\subsection{Properties of the partial order $\leq_{\rm arc}$} \label{subsec-lattice}

Let $\CK(\Gamma)$ 
be the set of all Klein tableaux refining an~LR-tableau $\Gamma$ with entries
at most two,
  and $\CK(\alpha,\beta,\gamma)$ the set of all Klein tableaux of partition type
  $(\alpha,\beta,\gamma)$ with $\alpha_1\leq 2$.
We describe properties of the posets $\CK(\Gamma)=(\CK(\Gamma),\leq_{\rm arc})$ 
and $\CK(\alpha,\beta,\gamma)=(\CK(\alpha,\beta,\gamma),\leq_{\rm arc})$.

\begin{thm}\label{thm-lattice} Let $\Gamma$ be an~LR tableau with entries
  at most two.
  \begin{enumerate}
  \item In the poset $\CK(\Gamma)$ there exists exactly one minimal element: the dominant Klein tableau.
  \item In the poset $\CK(\Gamma)$ there exists exactly one maximal element.
  \item In the poset $\CK(\alpha,\beta,\gamma)$ there exists exactly one maximal element.
  \item The set of minimal elements of the poset $\CK(\alpha,\beta,\gamma)$ 
    consists of the minimal elements of the posets
    $\CK(\Gamma_1),\ldots,\CK(\Gamma_s)$, where $\Gamma_1,\ldots,\Gamma_s$ 
    are all the LR-tableaux of type
    $(\alpha,\beta,\gamma)$.
  \end{enumerate} 
\end{thm}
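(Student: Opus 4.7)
The strategy rests on the combinatorial observation that every $<_{\rm arc}$-move (of any type {\bf (A)}, {\bf (B)}, {\bf (C)}, {\bf (D)}) strictly decreases the crossing number $x(\Pi)$ by exactly one, together with Klein's Hall polynomial identities from Section~\ref{sec-dim-hall}: the sum formula $g_\Gamma = \sum_{\Pi \text{ refines }\Gamma} g_\Pi$, the degree formula $\deg g_\Pi = n(\beta)-n(\alpha)-n(\gamma)-x(\Pi)$, and the fact that $g_\Gamma$ is monic of degree $n(\beta)-n(\alpha)-n(\gamma)$. Taken together these force exactly one Klein tableau refining any given $\Gamma$ to have $x = 0$, namely the dominant refinement $\Pi_0(\Gamma)$.

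For Part~(1), $\Pi_0(\Gamma)$ has $x=0$ so no $<_{\rm arc}$-move can be applied to it; hence it is minimal in $\CK(\Gamma)$. For uniqueness, given $\Pi\in\CK(\Gamma)$ with $x(\Pi)>0$, I would pick any crossing in the arc diagram: an arc--arc crossing permits a type-{\bf (A)} move and an arc--pole crossing permits a type-{\bf (B)} move, both in the $<_{\rm arc}$-direction, each preserving $\Gamma$ by Lemma~\ref{lemma-part-ordering} and strictly decreasing $x$; iterating yields a chain terminating at the unique zero-crossing refinement $\Pi_0(\Gamma)$, so $\Pi_0(\Gamma)<_{\rm arc}\Pi$. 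For Part~(4): each $\Pi_0(\Gamma_i)$ is minimal in $\CK(\alpha,\beta,\gamma)$ for the same reason (any $<_{\rm arc}$-move would reduce $x$ below $0$), and conversely any minimal element $\Pi$ of $\CK(\alpha,\beta,\gamma)$ must satisfy $x(\Pi)=0$ (else some crossing yields an applicable move), so $\Pi$ is the dominant refinement of its own LR-tableau.

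Parts~(2) and~(3) require proving uniqueness of the maximum, which is the main obstacle. My plan is to use Corollary~\ref{corollary-hom-arc} to replace $\arcleq$ with $\leq_{\rm hom}$ and Lemma~\ref{lem-hom-yields-mult} to see that the Hom-vector $X\mapsto[X,M_\Pi]$ determines the indecomposable-multiplicity vector of $M_\Pi$ and hence $\Pi$ itself. Existence of a maximum in $\CK(\Gamma)$ would follow dually to Proposition~\ref{proposition-key-lemma} by iterating $>_{\rm arc}$-moves of types~{\bf (A)} and~{\bf (B)}, which terminates because $x$ is bounded above by the total number of arc--arc and arc--pole crossings; uniqueness of this terminal state is forced because any two maximal Klein tableaux would yield the same componentwise-maximum Hom-vector and hence coincide. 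For Part~(3), Lemma~\ref{lemma-part-ordering} shows that any $>_{\rm arc}$-move of type~{\bf (C)} or~{\bf (D)} strictly decreases the LR-tableau in $\leq_{\rm part}$, so the global maximum lies in $\CK(\Gamma^*)$ where $\Gamma^*$ is the $\leq_{\rm part}$-minimal LR-tableau of type $(\alpha,\beta,\gamma)$; the delicate remaining point is verifying uniqueness of $\Gamma^*$ and that the maximum of $\CK(\Gamma^*)$ is truly global, which I expect to come out of the same Hom-vector dominance argument applied across LR-tableaux.
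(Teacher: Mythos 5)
The opening premise---that every $<_{\rm arc}$-move decreases $x(\Pi)$ by \emph{exactly} one---is false. A type~{\bf(A)} move replacing the crossed arcs $(m,r),(n,s)$ (with $m>n>r>s$) by the nested pair $(m,s),(n,r)$ removes their mutual crossing, but it can simultaneously remove crossings with other arcs: an arc $(a,b)$ with $a$ strictly between $n$ and $m$ and $b$ strictly between $s$ and $r$ crosses both arcs of the crossed pair and neither arc of the nested pair, so a single move can drop $x$ by $3$. This is why Section~1.4 of the paper is careful to say ``such that each move reduces the number of crossings by $1$'' as an extra hypothesis. Fortunately the false claim is not load-bearing for Parts~(1) and~(4): those only use that moves strictly decrease $x$, and your arguments there are essentially sound. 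They take a slightly different route from the paper (you argue via crossings and Klein's Hall-polynomial degree identities that exactly one refinement of $\Gamma$ has $x=0$; the paper argues directly with subscripts, exhibiting a downward move whenever some $\singlebox{2_r}$ carries a non-maximal subscript, and uses the degree/dimension argument only for Part~(4)), but the substance is the same.

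The genuine problem is Parts~(2) and~(3). For Part~(2) you observe that iterating upward moves terminates at some maximal element, and then assert that any two maximal elements ``would yield the same componentwise-maximum Hom-vector and hence coincide.'' That inference does not hold: if $\Pi_1\ne\Pi_2$ are both maximal, the componentwise maximum of $[{-},M_{\Pi_1}]$ and $[{-},M_{\Pi_2}]$ is simply some integer-valued function, and nothing in Lemma~\ref{lem-hom-yields-mult} or elsewhere says it is the Hom-vector of an actual object of $\mathcal S_2$; you would need a lattice property of $\leq_{\rm hom}$ that is neither proved here nor in the paper. What the paper does instead is exhibit the maximum explicitly: it is the refinement of $\Gamma$ obtained by greedily assigning to each entry $2$ the \emph{smallest} available subscript (dual to the dominant tableau), and one checks directly that any other $\Pi\in\CK(\Gamma)$ admits an upward move of type {\bf(A)} or {\bf(B)}, so this is the unique element with no outgoing upward move and hence (by finiteness) the unique maximal element. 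For Part~(3) you correctly identify, via Lemma~\ref{lemma-part-ordering}, that the global maximum must live over a $\leq_{\rm part}$-minimal LR-tableau, but you explicitly defer both the uniqueness of that LR-tableau and the globality of the resulting maximum to an unproved ``Hom-vector dominance argument across LR-tableaux.'' The paper fills this in constructively: take the LR-tableau with the entries $2$ pushed into the largest available rows, take its arc-maximal refinement from Part~(2), and verify that any other $\Pi'$ of partition type $(\alpha,\beta,\gamma)$ admits an upward move of type {\bf(C)} or {\bf(D)}. This explicit construction and verification is what is missing from your proposal.
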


\begin{proof}
1. Assume that $\Pi\in\CK(\Gamma)$ is not dominant. 
Recalling from Section~\ref{section-lr-tableau} how the dominant Klein tableau is obtained,
it follows that there exists a~row $m$ in $\Pi$ with a symbol $\singlebox{2_r}$, 
where $r$ is not the largest available subscript. Choose $m$ minimal with this property and 
denote by $r'$ the largest available subscript that could be assigned to $2$ in this row. 
Therefore in the arc diagram corresponding to $\Pi$ we have 

$$
\begin{picture}(20,8)(0,5)
\put(0,4){\line(1,0){20}}
\multiput(3,3)(4,0)4{$\bullet$}
\put(7,1){$\scriptstyle m$}
\put(11,1){$\scriptstyle r'$}
\put(15,1){$\scriptstyle r$}
\put(8,4){\oval(8,8)[t]}
\put(12,4){\oval(8,8)[t]}
\end{picture}
\quad\qquad \mbox{ or } \quad\qquad
\begin{picture}(16,8)(0,5)
\put(0,4){\line(1,0){16}}
\multiput(3,3)(4,0)3{$\bullet$}
\put(3,1){$\scriptstyle m$}
\put(7,1){$\scriptstyle r'$}
\put(11,1){$\scriptstyle r$}
\put(8,4){\oval(8,8)[t]}
\put(8,4){\line(0,1){10}}
\end{picture}
$$\vsp

\noindent This proves that $\Pi$ is not arc-minimal.  Then only the dominant 
tableau can be arc-minimal. 

2. We prove that the unique arc-maximal $\Pi\in\CK(\Gamma)$ is defined as follows: 
From an LR-tableau $\Gamma$ one can obtain a Klein tableau $\Pi$ by
working through the entries in $\Gamma$ row by row 
(starting at the top) and assigning
to each symbol $\ell\geq2$ the smallest available subscript 
(due to the lattice permutation property, 
there is always a subscript available). 

Assume that $\Pi\in\CK(\Gamma)$ does not satisfy this condition. 
Then there exists a~row $m$ with symbol $\singlebox{2_r}$, 
where $r$ is not the smallest available subscript. Choose $m$ minimal with this property and 
denote by $r'$ the smallest available subscript that could be assigned to $2$ in this row. 
Therefore in the arc diagram corresponding to $\Pi$ we have 

$$
\begin{picture}(20,8)(0,5)
\put(0,4){\line(1,0){20}}
\multiput(3,3)(4,0)4{$\bullet$}
\put(7,1){$\scriptstyle m$}
\put(11,1){$\scriptstyle r$}
\put(15,1){$\scriptstyle r'$}
\put(10,4){\oval(4,4)[t]}
\put(10,4){\oval(12,12)[t]}
\end{picture}
\quad\qquad \mbox{ or } \quad\qquad
\begin{picture}(16,8)(0,5)
\put(0,4){\line(1,0){16}}
\multiput(3,3)(4,0)3{$\bullet$}
\put(3,1){$\scriptstyle m$}
\put(7,1){$\scriptstyle r$}
\put(11,1){$\scriptstyle r'$}
\put(6,4){\oval(4,4)[t]}
\put(12,4){\line(0,1){10}}
\end{picture}
$$
\vsp

\noindent This proves that $\Pi$ is not arc-maximal.  

3. Define $\Pi\in\CK(\alpha,\beta,\gamma)$ as follows. 
Choose an~LR-tableau $\Gamma\in\CK(\alpha,\beta,\gamma)$ such that
the entries $\singlebox{2}$ are in the largest available rows. Let $\Pi$ be the arc-maximal
element in $\CK(\Gamma)$. It is easy to see that we cannot apply to $\Pi$ moves of types {\bf (C)}, {\bf (D)},
because these moves sent the entries $\singlebox{2}$ to larger rows.  Since  $\Pi$ is the arc-maximal
element in $\CK(\Gamma)$, we cannot apply to $\Pi$ moves of types {\bf (A)}, {\bf (B)}. Therefore
$\Pi$ is an~arc-maximal element in $\CK(\alpha,\beta,\gamma)$.

Assume that $\Pi'$ is an~element in $\CK(\alpha,\beta,\gamma)$ such that in the LR-tableau $\Gamma'$
corresponding to $\Pi'$ there exists an~entry $\singlebox{2}$ in the row $r$ that is not the largest possible. 
Therefore in the arc diagram corresponding to $\Pi'$ we have 

$$
\begin{picture}(20,8)(0,5)
\put(0,4){\line(1,0){20}}
\multiput(3,3)(4,0)4{$\bullet$}
\put(7,1){$\scriptstyle m$}
\put(11,1){$\scriptstyle r$}
\put(15,1){$\scriptstyle r'$}
\put(14,4){\oval(4,4)[t]}
\put(6,4){\oval(4,4)[t]}
\end{picture}
\quad\qquad \mbox{ or } \quad\qquad
\begin{picture}(16,8)(0,5)
\put(0,4){\line(1,0){16}}
\multiput(3,3)(4,0)3{$\bullet$}
\put(3,1){$\scriptstyle m$}
\put(7,1){$\scriptstyle r$}
\put(11,1){$\scriptstyle r'$}
\put(10,4){\oval(4,4)[t]}
\put(4,4){\line(0,1){10}}
\end{picture}
$$
\vsp

\noindent This proves that $\Pi'$ is not arc-maximal.  

4. Let $\Pi_1,\ldots,\Pi_s$ be the minimal elements in the posets
$\CK(\Gamma_1),\ldots,\CK(\Gamma_s)$, respectively. Since the orbits 
$G_{\Pi_1}(k),\ldots,G_{\Pi_s}(k)$ have maximal dimension equal to
$\deg\, g_{\alpha,\gamma}^\beta$ and 
since the arc-order is equivalent to the deg-order, the elements $\Pi_1,\ldots,\Pi_s$
are minimal in $\CK(\alpha,\beta,\gamma)$. It is easy to see that they are the 
only minimal elements. 
\end{proof}

\smallskip
{\it Example 1:}
Consider the following LR-tableau  of type $(\alpha,\beta,\gamma)$,
where $\alpha=(2,2,1,1)$, $\gamma=(4,3,2,2,1)$ and $\beta=(5,4,3,3,2,1)$

\vspace{-4mm}
$$\Gamma:\quad
\setcounter{boxsize}{3}
\begin{picture}(18,12)(0,6)
\multiput(0,12)(3,0)5{\smbox}
\put(15,12){\numbox{1}}
\multiput(0,9)(3,0)4{\smbox}
\put(12,9){\numbox{1}}
\multiput(0,6)(3,0)2{\smbox}
\multiput(6,6)(3,0)2{\numbox{1}}
\put(0,3){\smbox}
\put(3,3){\numbox{2}}
\put(0,0){\numbox{2}}
\end{picture}
$$
\vspace{0.5cm}

The arc diagrams $\Pi_1,\ldots,\Pi_7$ 
corresponding to $\Gamma$ are pictured in the Hasse diagram.

\begin{figure}[p]
$$
\begin{picture}(100,180)

\put(30,168){$\Pi_7:$}

\put(40,160){\framebox(26,18)[t]{\begin{picture}(24,12)(0,5)
\put(0,4){\line(1,0){24}}
\multiput(3,3)(4,0)5{$\bullet$}
\put(4,1){$\scriptstyle 5$}
\put(8,1){$\scriptstyle 4$}
\put(12,1){$\scriptstyle 3$}
\put(16,1){$\scriptstyle 2$}
\put(20,1){$\scriptstyle 1$}
\put(10,4){\oval(12,12)[t]}
\put(14,4){\oval(12,12)[t]}
\put(12,4){\line(1,1){10}}
\put(12,4){\line(-1,1){10}}
\end{picture}}}

\put(26,140){\vector(1,1){15}}
\put(78,140){\vector(-1,1){15}}
\put(-10,128){$\Pi_5:$}

\put(0,120){\framebox(26,18)[t]{\begin{picture}(24,12)(0,5)
\put(0,4){\line(1,0){24}}
\multiput(3,3)(4,0)5{$\bullet$}
\put(4,1){$\scriptstyle 5$}
\put(8,1){$\scriptstyle 4$}
\put(12,1){$\scriptstyle 3$}
\put(16,1){$\scriptstyle 2$}
\put(20,1){$\scriptstyle 1$}
\put(8,4){\oval(8,8)[t]}
\put(14,4){\oval(12,12)[t]}
\put(12,4){\line(0,1){10}}
\put(16,4){\line(0,1){10}}
\end{picture}}}

\put(70,128){$\Pi_6:$}
 
\put(80,120){\framebox(26,18)[t]{\begin{picture}(24,12)(0,5)
\put(0,4){\line(1,0){24}}
\multiput(3,3)(4,0)5{$\bullet$}
\put(4,1){$\scriptstyle 5$}
\put(8,1){$\scriptstyle 4$}
\put(12,1){$\scriptstyle 3$}
\put(16,1){$\scriptstyle 2$} 
\put(20,1){$\scriptstyle 1$}
\put(12,4){\oval(16,16)[t]}
\put(12,4){\oval(8,8)[t]}
\put(12,4){\line(1,1){10}}
\put(12,4){\line(-1,1){10}}
\end{picture}}}

\put(26,100){\vector(3,1){50}}
\put(78,100){\vector(-3,1){50}}
\put(13,100){\vector(0,1){15}}
\put(93,100){\vector(0,1){15}}
\put(-10,88){$\Pi_3:$}

\put(0,80){\framebox(26,18)[t]{\begin{picture}(24,12)(0,5)
\put(0,4){\line(1,0){24}}
\multiput(3,3)(4,0)5{$\bullet$}
\put(4,1){$\scriptstyle 5$}
\put(8,1){$\scriptstyle 4$}
\put(12,1){$\scriptstyle 3$}
\put(16,1){$\scriptstyle 2$}
\put(20,1){$\scriptstyle 1$}
\put(8,4){\oval(8,8)[t]}
\put(12,4){\oval(8,8)[t]}
\put(12,4){\line(0,1){10}}
\put(20,4){\line(0,1){10}}
\end{picture}}}

\put(70,88){$\Pi_4:$}

\put(80,80){\framebox(26,18)[t]{\begin{picture}(24,12)(0,5)
\put(0,4){\line(1,0){24}}
\multiput(3,3)(4,0)5{$\bullet$}
\put(4,1){$\scriptstyle 5$}
\put(8,1){$\scriptstyle 4$}
\put(12,1){$\scriptstyle 3$}
\put(16,1){$\scriptstyle 2$}
\put(20,1){$\scriptstyle 1$}
\put(12,4){\oval(16,16)[t]}
\put(10,4){\oval(4,4)[t]}
\put(12,4){\line(0,1){10}}
\put(16,4){\line(0,1){10}}
\end{picture}}}

\put(63,60){\vector(1,1){15}}
\put(43,60){\vector(-1,1){15}}
\put(30,48){$\Pi_2:$}

\put(40,40){\framebox(26,18)[t]{\begin{picture}(24,12)(0,5)
\put(0,4){\line(1,0){24}}
\multiput(3,3)(4,0)5{$\bullet$}
\put(4,1){$\scriptstyle 5$}
\put(8,1){$\scriptstyle 4$}
\put(12,1){$\scriptstyle 3$}
\put(16,1){$\scriptstyle 2$}
\put(20,1){$\scriptstyle 1$}
\put(10,4){\oval(12,12)[t]}
\put(10,4){\oval(4,4)[t]}
\put(12,4){\line(0,1){10}}
\put(20,4){\line(0,1){10}}
\end{picture}}
}
\put(53,20){\vector(0,1){15}}

\put(30,8){$\Pi_1:$}
\put(40,0){\framebox(26,18)[t]{\begin{picture}(24,12)(0,5)
\put(0,4){\line(1,0){24}}
\multiput(3,3)(4,0)5{$\bullet$}
\put(4,1){$\scriptstyle 5$}
\put(8,1){$\scriptstyle 4$}
\put(12,1){$\scriptstyle 3$}
\put(16,1){$\scriptstyle 2$}
\put(20,1){$\scriptstyle 1$}
\put(8,4){\oval(8,8)[t]}
\put(10,4){\oval(4,4)[t]}
\put(16,4){\line(0,1){10}}
\put(20,4){\line(0,1){10}}
\end{picture}}} 
\end{picture}
$$
\centerline{{\it Example 1:} Hasse diagram of the poset $\CK(\Gamma)$}
\end{figure}

By \cite[II.(4.1)]{macd}, we have ${\rm deg}\,h_\Gamma=h_{\alpha,\gamma}^\beta=8$;
with ${\rm deg}\,a_\alpha=20$ it follows that ${\rm deg}\, g_\Gamma=28$.
We use Theorem~\ref{thm-second-main} to compute
${\rm deg}\,g_{\Pi_i}={\rm deg}\,g_\Gamma-x(\Pi_i)$, for $i=1,\ldots,7$. Note that
\begin{itemize}
 \item the Klein tableau corresponding to $\Pi_1$ is dominant, 
   so $x(\Pi_1)=0$ and $\dim\,G_{\Pi_1}=28$;
 \item $x(\Pi_2)=1$ and $\dim\,G_{\Pi_2}=27$;
 \item $x(\Pi_3)=x(\Pi_4)=2$ and $\dim\,G_{\Pi_3}=\dim\, G_{\Pi_4}=26$;
 \item $x(\Pi_5)=3$ and $\dim\,G_{\Pi_5}=25$;
 \item $x(\Pi_6)=4$ and $\dim\,G_{\Pi_6}=24$;
 \item $x(\Pi_7)=5$ and $\dim\,G_{\Pi_7}=23$  
\end{itemize}
 (compare Proposition \ref{prop:dim-zero-char}).

\smallskip
{\it Example 2:}
Consider the following triple of partitions $(\alpha,\beta,\gamma)$,
where $\alpha=(2,1,1)$, $\gamma=(3,2,1)$ and $\beta=(4,3,2,1)$. There are three LR-tableaux
of type $(\alpha,\beta,\gamma)$:

\vspace{-4mm}
$$\Gamma_1:\quad
\setcounter{boxsize}{3}
\begin{picture}(18,12)(0,6)
\multiput(0,12)(3,0)4{\smbox}
\put(9,12){\numbox{1}}
\multiput(0,9)(3,0)3{\smbox}
\put(6,9){\numbox{1}}
\multiput(0,6)(3,0)2{\smbox}
\put(3,6){\numbox{1}}
\put(0,3){\smbox}
\put(0,3){\numbox{2}}
\end{picture}
\Gamma_2:\quad
\setcounter{boxsize}{3}
\begin{picture}(18,12)(0,6)
\multiput(0,12)(3,0)4{\smbox}
\put(9,12){\numbox{1}}
\multiput(0,9)(3,0)3{\smbox}
\put(6,9){\numbox{1}}
\multiput(0,6)(3,0)2{\smbox}
\put(3,6){\numbox{2}}
\put(0,3){\smbox}
\put(0,3){\numbox{1}}
\end{picture}
\Gamma_3:\quad
\setcounter{boxsize}{3}
\begin{picture}(18,12)(0,6)
\multiput(0,12)(3,0)4{\smbox}
\put(9,12){\numbox{1}}
\multiput(0,9)(3,0)3{\smbox}
\put(6,9){\numbox{2}}
\multiput(0,6)(3,0)2{\smbox}
\put(3,6){\numbox{1}}
\put(0,3){\smbox}
\put(0,3){\numbox{1}}
\end{picture}
$$\vspace{0.5cm}

Note that $\Gamma_1\partleq\Gamma_2\partleq\Gamma_3$ and 
$$ \CK(\Gamma_1)=(\Pi_1\to\Pi_4\to\Pi_6)\; ,\; \CK(\Gamma_2)=(\Pi_2\to\Pi_5)\; ,\;\; \CK(\Gamma_3)=(\Pi_3),$$
where $\Pi_1,\ldots,\Pi_6$ are 
as indicated in the Hasse diagram.

\begin{figure}[tbh]
$$
\begin{picture}(100,100)

\put(30,88){$\Pi_6:$}

\put(40,80){\framebox(26,18)[t]{\begin{picture}(20,12)(0,5)
\put(0,4){\line(1,0){20}}
\multiput(3,3)(4,0)4{$\bullet$}
\put(4,1){$\scriptstyle 4$}
\put(8,1){$\scriptstyle 3$}
\put(12,1){$\scriptstyle 2$}
\put(16,1){$\scriptstyle 1$}
\put(10,4){\oval(12,12)[t]}
\put(8,4){\line(0,1){10}}
\put(12,4){\line(0,1){10}}
\end{picture}}}

\put(26,60){\vector(1,1){15}}
\put(78,60){\vector(-1,1){15}}
\put(-10,48){$\Pi_4:$}

\put(0,40){\framebox(26,18)[t]{\begin{picture}(20,12)(0,5)
\put(0,4){\line(1,0){20}}
\multiput(3,3)(4,0)4{$\bullet$}
\put(4,1){$\scriptstyle 4$}
\put(8,1){$\scriptstyle 3$}
\put(12,1){$\scriptstyle 2$}
\put(16,1){$\scriptstyle 1$}
\put(8,4){\oval(8,8)[t]}
\put(8,4){\line(0,1){10}}
\put(16,4){\line(0,1){10}}
\end{picture}}}

\put(70,48){$\Pi_5:$}
 
\put(80,40){\framebox(26,18)[t]{\begin{picture}(20,12)(0,5)
\put(0,4){\line(1,0){20}}
\multiput(3,3)(4,0)4{$\bullet$}
\put(4,1){$\scriptstyle 4$}
\put(8,1){$\scriptstyle 3$}
\put(12,1){$\scriptstyle 2$}
\put(16,1){$\scriptstyle 1$}
\put(12,4){\oval(8,8)[t]}
\put(12,4){\line(0,1){10}}
\put(4,4){\line(0,1){10}}
\end{picture}}}

\put(13,20){\vector(0,1){15}}
\put(93,20){\vector(0,1){15}}
\put(-10,8){$\Pi_1:$}

\put(0,0){\framebox(26,18)[t]{\begin{picture}(20,12)(0,5)
\put(0,4){\line(1,0){20}}
\multiput(3,3)(4,0)4{$\bullet$}
\put(4,1){$\scriptstyle 4$}
\put(8,1){$\scriptstyle 3$}
\put(12,1){$\scriptstyle 2$}
\put(16,1){$\scriptstyle 1$}
\put(6,4){\oval(4,4)[t]}
\put(12,4){\line(0,1){10}}
\put(16,4){\line(0,1){10}}
\end{picture}}}

\put(70,8){$\Pi_3:$}

\put(80,0){\framebox(26,18)[t]{\begin{picture}(20,12)(0,5)
\put(0,4){\line(1,0){20}}
\multiput(3,3)(4,0)4{$\bullet$}
\put(4,1){$\scriptstyle 4$}
\put(8,1){$\scriptstyle 3$}
\put(12,1){$\scriptstyle 2$}
\put(16,1){$\scriptstyle 1$}
\put(14,4){\oval(4,4)[t]}
\put(4,4){\line(0,1){10}}
\put(8,4){\line(0,1){10}}
\end{picture}}}

\put(63,20){\vector(1,1){15}}
\put(43,20){\vector(-1,1){15}}
\put(30,8){$\Pi_2:$}

\put(40,0){\framebox(26,18)[t]{\begin{picture}(20,12)(0,5)
\put(0,4){\line(1,0){20}}
\multiput(3,3)(4,0)4{$\bullet$}
\put(4,1){$\scriptstyle 4$}
\put(8,1){$\scriptstyle 3$}
\put(12,1){$\scriptstyle 2$}
\put(16,1){$\scriptstyle 1$}
\put(10,4){\oval(4,4)[t]}
\put(4,4){\line(0,1){10}}
\put(16,4){\line(0,1){10}}
\end{picture}}}
\end{picture}
$$
\centerline{{\it Example 2:} Hasse diagram of the poset $\CK(\alpha,\beta,\gamma)$}
\end{figure}

\bigskip
\textbf{Acknowledgement.} The first named author wishes to thank 
Florida Atlantic University, where the authors started to write this paper, for the invitation.
The second named author gratefully recognizes invitation and support for his visit to 
Nicolaus Copernicus University in Toru\'n during December 2011. 
Both authors want to thank Grzegorz Zwara for the discussion on the 
geometric part of this paper.

\medskip
\textbf{Dedication.}  
The authors wish to dedicate this paper to Professor Daniel Simson on the occasion of his 70th birthday.
Throughout our careers, we have known Prof.\ Simson as a devoted scientist who has nurtured 
our interests towards attractive areas in algebra.  We thank him for taking part in our
mathematical development and for supporting us in a variety of ways.

Address of the authors:

\parbox[t]{5.5cm}{\footnotesize\begin{center}
              Faculty of Mathematics\\
              and Computer Science\\
              Nicolaus Copernicus University\\
              ul.\ Chopina 12/18\\
              87-100 Toru\'n, Poland\end{center}}
\parbox[t]{5.5cm}{\footnotesize\begin{center}
              Department of\\
              Mathematical Sciences\\ 
              Florida Atlantic University\\
              777 Glades Road\\
              Boca Raton, Florida 33431\end{center}}

\smallskip \parbox[t]{5.5cm}{\centerline{\footnotesize\tt justus@mat.umk.pl}}
           \parbox[t]{5.5cm}{\centerline{\footnotesize\tt markus@math.fau.edu}}

\end{document}